\definecolor{Prune}{RGB}{99,0,60}
\definecolor{myprpl}{RGB}{255,0,130}
\DeclareFontFamily{U}{wncy}{}
\DeclareFontShape{U}{wncy}{m}{n}{<->wncyr10}{}
\DeclareSymbolFont{mcy}{U}{wncy}{m}{n}
\DeclareMathSymbol{\Sh}{\mathord}{mcy}{"58} 
\DeclareMathSymbol{\Be}{\mathord}{mcy}{"42} 
\newcommand\midotimes{\raisebox{0.25ex}{\scalebox{0.75}{$\bigotimes$}}}
\newcommand{\pro}{{ _{\textrm{pro}} }}
\newcommand{\ppro}{_{\textrm{pro}}}
\newtheoremstyle{myplain}
  {.5em}       
  {.5em}       
  {\itshape}   
  {0pt}        
  {\scshape}   
  {.}          
  {5pt plus 1pt minus 1pt} 
  {}           
\newtheoremstyle{mydefinition}
  {.5em}       
  {.5em}       
  {\normalfont}
  {0pt}        
  {\bfseries}  
  {.}          
  {5pt plus 1pt minus 1pt} 
  {}           
\newtheoremstyle{myremark}
  {.5em}       
  {.5em}       
  {\normalfont}
  {0pt}        
  {\itshape}   
  {.}          
  {5pt plus 1pt minus 1pt} 
  {}           
\numberwithin{equation}{section}
\theoremstyle{myplain}
\newtheorem{thm}{Theorem}[section]
\newtheorem{lem}[thm]{Lemma}
\newtheorem{cor}[thm]{Corollary}
\newtheorem{prop}[thm]{Proposition}
\theoremstyle{mydefinition}
\newtheorem{defn}[thm]{Definition}
\newtheorem{exmp}[thm]{Example}
\theoremstyle{myremark}
\newtheorem{rem}[thm]{Remark}
\newtheorem*{intprf}{Proof}
\newtheorem*{intprfsk}{Sketch of Proof}
\newenvironment{prf}[1][]
    {\ifstrempty{#1}{\begin{intprf}}
                    {\begin{intprf}[\textit{#1}]}}
    {\qed\end{intprf}}
\title[Brauer-Manin obstructions for commutative affine groups]
{{\large{\centerline{Brauer-Manin obstructions for}\centerline{homogeneous spaces of commutative}\centerline{affine algebraic groups over global fields}}}
\hspace{0pt}
\\\;
\\{$\;$\normalfont{Azur ĐonlagiĆ}$\,^\star$}}
\thanks{$^\star\,$Part of the author's PhD project at Universit\'e Paris-Saclay under the guidance of prof. David Harari and funded through the PhD Track of the Fondation Math\'ematique Jacques Hadamard.}
\begin{document}

\begin{abstract}
Questions related to Brauer-Manin obstructions to the Hasse principle and weak approximation for homogeneous spaces of tori over a number field are well-studied, generally using arithmetic duality theorems, starting with works of Sansuc and of Colliot-Th\'el\`ene. In this article, we prove the analogous statements (and include obstructions to strong approximation) in the general case of a commutative affine group scheme $G$ of finite type over a global field in any characteristic. We also study finiteness of the $\Sh^2_S(\widehat{G})$ and $\Sh^2_\omega(\widehat{G})$ kernels of the Cartier dual of $G$. All this is made possible by some recent theoretical advancements in positive~characteristic, namely the finiteness theorems of B. Conrad and the generalized Tate duality of Z. Rosengarten. 
\end{abstract}
\maketitle

\vspace{-10pt}
\tableofcontents
\vspace{-25pt}

\section{Introduction and notation}

In this article, an ``algebraic group'' means a group scheme of finite type over a field. Fix a global field $k$ (of any characteristic, although we will mostly concern ourselves with positive characteristic) and a commutative affine algebraic group $G$ over it. Let $\Omega$ be the set of places of $k$. For any subset $S\subseteq\Omega$, we write $k_S\coloneqq \prod_{v\in S} k_v$ and $k^S\coloneqq k_{\Omega\setminus S}$, and similarly $\mathbf A = \mathbf A_S\times\mathbf A^S$ for the adeles of $k$ with their respective topology. In particular, $\mathbf A_S = k_S$ for finite $S$. Whenever we consider $\mathbf A_S$ (or $\mathbf A^S$) as a $k$-algebra, we do it via the composition $k\rightarrow\mathbf A\rightarrow\mathbf A_S$.

Let $X$ be a scheme of finite type over $k$. For any topological $k$-algebra $R$, we endow the $R$-points $X(R)$ with their natural topology as explained in \cite{ConAd}. We denote by $X(k)$ both the $k$-points of $X$ and their natural image in $X(R)$, implicit in the context. Recall that $X$ satisfies the \textit{Hasse principle} if the logical implication $X(k_\Omega)\neq\varnothing\Longrightarrow X(k)\neq\varnothing$ holds true (its converse clearly always holds). If the stronger property $\overline{X(k)} = X(k_\Omega)$ holds true, where the bar denotes closure with respect to the product topology on $k_\Omega$, we say that $X$ satisfies the \textit{weak approximation property}. A closely related property is the \textit{strong approximation property with respect to $S\subset\Omega$}, which is satisfied by $X$ if $\overline{X(k)} = X(\mathbf A_S)$ with respect to the adelic topology. Strong approximation can usually be proven at most for proper cofinite subsets $S$ of $\Omega$. Note that weak approximation is essentially equivalent to strong approximation holding true for all finite $S$, while the name ``strong'' is generally reserved for cofinite $S$, which explains their naming convention.

Most families of varieties do not satisfy the Hasse principle nor weak approximation, however this failure can sometimes (notably in low-dimensional cases or for some homogeneous spaces of algebraic groups) be explained by the existence of a so-called Brauer-Manin obstruction, to be introduced formally in the next section. To be precise, given a collection of varieties, we say that the Brauer-Manin obstruction is the \textit{only} obstruction to the Hasse principle (resp. weak/strong approximation) for varieties in this collection if every variety in this collection for which the Brauer-Manin obstruction vanishes satisfies the Hasse principle (resp. weak/strong approximation, in a slightly more subtle sense). In his 1981 paper \cite{San81} (\textsection 8), Sansuc showed this property for principal homogeneous spaces of connected affine algebraic groups (without a factor of type $E_8$) over a number field $k$. The main idea of his proof is to, crucially using the connectedness assumption, reduce to the essential case of a torus $T$, then to use a comparison between the Brauer-Manin pairing (which defines the obstruction) and the Poitou-Tate pairing for the finite kernel of an isogeny of tori $\widetilde{T}\rightarrow T$ with $\widetilde{T}$ quasi-trivial. The Poitou-Tate pairing is known to be nondegenerate, which then implies the desired result.

Alternatively (as explained in \cite{Sko01}, \textsection 6.2), it is possible to apply a more general version of the Poitou-Tate pairing directly to $T$, then show compatibility with the Brauer-Manin pairing which immediately gives the result for tori. We generalize this approach using the Tate pairings recently introduced by Rosengarten in \cite{RosTD} for any commutative affine algebraic group $G$, not necessarily smooth, over a global field in any characteristic (see \cite{RosTD}, Appendix G; all cited results of this paper remain true over number fields, up to everywhere replacing $G(\mathbf R)$ and $G(\mathbf C)$ by the Tate cohomology group $\mathrm{\widehat H}^0(\mathbf R,G)$ and $0$, respectively), as follows:
\vfill

We explain all the discussed Brauer-Manin obstructions for a principal homogeneous space $X$ of $G$ in Section 2 and immediately construct the functorial comparison map which allows us to reinterpret them in terms of the Cartier dual $\widehat{G}$ of $G$ and the cup product in cohomology. This gives, in Section 3, a characteristic-free proof (Theorem \ref{thmmainglobal}) for homogeneous spaces $X$ of all commutative affine algebraic groups $G$ of the uniqueness of the Brauer-Manin obstruction to the Hasse principle. The present difficulties are in constructing~the~comparison map and then checking the compatibility of the two pairings. All of the obstacles come~from~the use of flat cohomology instead of \'etale cohomology. For example, the comparison map must be constructed without appeal to Rosenlicht's lemma, which is not available outside of \'etale topology.

In Section 4, we show uniqueness on $X$ of the Brauer-Manin obstruction to weak and strong approximation. Unlike Sansuc, we do not assume connectedness. However,~as~remarked above, the Poitou-Tate theory cannot recover information about $\mathbf R$- and $\mathbf C$-points; thus our statements in characteristic $0$ are limited to non-Archimedean places. For a sheaf $\mathcal F$ on the fppf site of $k$,
we define the Tate-Shafarevich kernels $\Sh^i$ and $S$-kernels $\Sh^i_S$ for any set $S\subseteq\Omega$ by letting
\vspace{-11pt}

$$\Sh^i_S(\mathcal F)\coloneqq \ker\left(\mathrm{H}^i(k,\mathcal F)\longrightarrow\prod_{v\in\Omega\setminus S}\mathrm{H}^i(k_v,\mathcal F)\right)$$
\vspace{-7pt}

\noindent
and $\Sh^i(\mathcal F)\coloneqq\Sh^i_\varnothing(\mathcal F)$, all maps being induced by the completions $k\rightarrow k_v$. We~consider~also the $\omega$-kernel $\Sh_\omega^i(\mathcal F)\coloneqq\bigcup_{\textrm{finite }S}\Sh_S^i(\mathcal F)$ and similarly introduce $\Sh_f^i(\mathcal F)\coloneqq\bigcup_{\textrm{finite }S\subseteq \Omega_f}\Sh_S^i(\mathcal F)$, where $\Omega_f = \Omega\setminus\Omega_\infty$ denotes the finite places of $k$ (so $\Sh_f^i = \Sh_\omega^i$ if $\mathrm{char}\,k > 0$). The main result (Theorem \ref{thmmainlocal}) of Section 4 is based on the following two exact sequences, where $\Omega\neq S\subseteq\Omega_f$,\\
\vspace{-21pt}

$$0\longrightarrow \overline{G(k)}^{_S}\longrightarrow G(\mathbf A_S)\longrightarrow \Sh_S^2(\widehat{G})^*\longrightarrow \Sh^2(\widehat{G})^*\longrightarrow 0$$
$$0\longrightarrow \overline{G(k)}\longrightarrow \prod\nolimits_{v\in\Omega_f} G(k_v)\longrightarrow \Sh_f^2(\widehat{G})^*\longrightarrow  \Sh^2(\widehat{G})^*\longrightarrow 0$$
\vspace{-11pt}

\noindent
in which the groups on the left are closures in ($S$-)adelic and product~topologies, respectively, and $^*$ denotes the algebraic dual. Since the left halves of the two sequences suffice~to~deduce~the Brauer-Manin statements, the proof of exactness of the right half of the second sequence is done only in Section 5. It is shown to be equivalent to compactness of the quotient $\prod\nolimits_{v\in\Omega_f} G(k_v)/\overline{G(k)}$. This quotient is also shown (Theorem \ref{thmsha2fin}) to be finite when the group $G$ has no non-finite wound unipotent quotient; otherwise a counterexample can be proven to exist by the work of Oesterl\'e. We study such counterexamples explicitly in Section 6.

All cohomology considered in this article is fppf cohomology. The algebraic (resp. separable) closure of $k$ is denoted by $\overline{k}$ (resp. $k^s$). We denote the additive (resp. multiplicative) algebraic group by $\mathbf G_\mathrm{a}$ (resp. $\mathbf G_\mathrm{m}$) and we identify them with the fppf sheaves they represent.~We~do~not explicitly write base changes (e.g. $\mathbf G_{\mathrm{m},k}$ instead of $\mathbf G_\mathrm{m}$) if they are clear in context. 
We heavily borrow from Rosengarten's remarkable paper \cite{RosTD} and so adopt the same notation wherever possible. This includes the following notation:
\begin{itemize}
    \item for Cartier duals of fppf $k$-sheaves, $\widehat{\mathcal F} = \mathcal{Hom}_k(\mathcal F, \mathbf G_\mathrm{m})$
    \item for algebraic duals of Abelian groups, $A^* = \mathrm{Hom}(A, \mathbf Q/\mathbf Z)$, an exact functor
    \item for topological duals of topological groups, $A^D = \mathrm{Hom}_{cont}(A, \mathbf R/\mathbf Z)$
    \item for profinite completions of (discrete or topological) groups, $A\pro\coloneqq\varprojlim A/U$ taken over open subgroups $U\subseteq A$ of finite index
\end{itemize}
Note that for all topological groups which we will consider (discrete torsion, or profinite,~or~those of finite exponent) we have $A^D = \mathrm{Hom}_{cont}(A, (\mathbf Q/\mathbf Z)_{disc}) = \mathrm{Hom}_{loc.const}(A, \mathbf Q/\mathbf Z)$.


\section{The Brauer-Manin obstruction and comparison maps}

Let $k$ be a global field. Given any completion $k_v$ of $k$, we write $\mathrm{inv}_v : \mathrm{Br}(k_v)\rightarrow\mathbf Q/\mathbf Z$ for the invariant map of class field theory (an isomorphism for finite places $v$, by a theorem of Hasse). Recall the following sequence, which is exact by the Brauer-Hasse-Noether theorem:
\vspace{-10pt}

$$0\rightarrow \mathrm{Br}(k)\rightarrow\bigoplus\nolimits_v\mathrm{Br}(k_v)\xrightarrow{\;\sum_v\mathrm{inv}_v\;}\mathbf Q/\mathbf Z\rightarrow 0$$

\begin{defn}
Consider a scheme $X$ of finite type over $k$ and its (cohomological)~Brauer~group $\mathrm{Br}(X) = \mathrm{H}^2(X,\mathbf G_{\mathrm{m}})$. The \textit{Brauer-Manin pairing} $\langle-,-\rangle_{BM}$ related to $X$ is the composition
$$X(\mathbf A)\times\mathrm{Br}(X)\longrightarrow \bigoplus\nolimits_v\mathrm{Br}(k_v)\rightarrow\mathbf Q/\mathbf Z\, ,\;\;\;\; \langle(P_v),A\rangle_{BM} = \sum\nolimits_v \mathrm{inv}_v(A(P_v))$$
where $X(\mathbf A)\subseteq X(k_\Omega) = \prod_v\,\!\! X(k_v)$ and the symbol $A(P_v)$ denotes the pullback of the local~image $A_v\in\mathrm{Br}(X_{k_v})$ to $\mathrm{Br}(k_v)$ by $P_v$. The pairing is well-defined because the sum on the right is finite by Proposition 8.2.1 in \cite{Poo17}. It is linear in the right element.
\end{defn}

If $(P_v)$ is determined by some $P\in X(k)$, then $A(P_v)$ is an image of $A(P)\in\mathrm{Br}(k)$ and the pair $\langle P,A\rangle_{BM}$ vanishes by the Brauer-Hasse-Noether theorem. That is, $X(k)$ lies in the left kernel $X(\mathbf A)^{\mathrm{Br}}$ of $\langle -,-\rangle_{BM}$ (the set of points ``orthogonal'' to $\mathrm{Br}(X)$ with respect to this pairing).
Therefore, if $X(\mathbf A)^{\mathrm{Br}}$ is empty, then $X(k)$ must be empty as well. This general condition is called the \textit{Brauer-Manin obstruction to the Hasse principle} on $X$, as it precludes the existence of rational points even if $X(\mathbf A)$ is nonempty.
Conversely, we would like to know whether, if there exists an adelic point $(P_v)\in X(\mathbf A)$ orthogonal to (some particular part $\mathcal B(X)$ of) $\mathrm{Br}(X)$, we can conclude the non-emptiness of $X(k)$. If this property ($X(\mathbf A)^{\mathcal B}\neq\varnothing\Longrightarrow X(k)\neq\varnothing$) is true for all varieties in some family, then we say that the Brauer-Manin obstruction (with respect to $\mathcal B$) is the \textit{only obstruction to the Hasse principle} for this family.

Finally, the Brauer-Manin pairing is continuous with respect to the adelic topology (\cite{Poo17}, Corollary 8.2.11), hence the left kernel $X(\mathbf A)^{\mathrm{Br}}$ of $\langle -,-\rangle_{BM}$ is closed and contains $\overline{X(k)}$. Therefore, the condition $X(\mathbf A)^{\mathrm{Br}}\neq X(\mathbf A)$, if it holds, is called the \textit{Brauer-Manin obstruction to strong approximation} of adelic points on $X$. More generally (and much more realistically), we consider only some natural subgroup $\mathcal B_S$ of $\mathrm{Br}$ such that the pairing $\langle -,-\rangle_{BM}$ descends to a pairing $X(\mathbf A_S)\times\mathcal B_S$ for a subset $S\subseteq\Omega$. Then we may hope that, for a family of varieties, the Brauer-Manin obstruction with respect to $\mathcal B_S$ is the only obstruction to strong approximation of points in $X(\mathbf A_S)$. Of course, the analogous statement for weak approximation is equivalent to this statement taken over all finite subsets $S$ of $\Omega$.

\begin{rem}\label{remgeomint}
As in the Hasse principle, we would often like to relax the condition $X(\mathbf A)\neq\varnothing$ to $X(k_{\Omega})\neq\varnothing$. This is immediate if the finite-type $k$-scheme $X$ is proper, as then $X(\mathbf A) = X(k_{\Omega})$ ($X(\mathcal O_v) = X(k_v)$ for almost all $v$ by ``spreading out'' and the valuative criterion of properness), and also when $X$ is geometrically integral (because then $X(k_\Omega)\neq\varnothing$ implies $X(\mathbf A)\neq\varnothing$ by the proof of Theorem 7.7.2 in \cite{Poo17}).
Otherwise, when $k$ is a number field, we may~pass~to~a smooth compactification $X_{\! c}$ of $X$ and consider the ``unramified Brauer group'' $\mathrm{Br}(X)_{\mathrm{nr}} = \mathrm{Br}(X_{\! c})$ so that a pairing $X(k_{\Omega})\times\mathrm{Br}(X)_{\mathrm{nr}}\rightarrow\mathbf Q/\mathbf Z$ is well-defined. It is then possible to speak directly of an ``obstruction to weak approximation'' given by this pairing (see \cite{Sko01}, \textsection 5.2).

Smooth compactifications are in general not available over fields of positive characteristic. However, as explained below, the role of $\mathrm{Br}(X)_{\mathrm{nr}}$ can be played by the groups $\Be_\omega(X)$~and~$\Be_f(X)$ in a weaker sense. This will be sufficient to discuss weak approximation in our setting. 
\end{rem}

\begin{defn}
We now consider variants of the Brauer group which are especially convenient in the context of homogeneous spaces of commutative affine algebraic groups. Define
\vspace{-13pt}

$$\Be_S(X)\coloneqq\mathrm{ker}\left(\frac{\mathrm{Br}(X)}{\mathrm{im}\,\mathrm{Br}(k)}\longrightarrow\prod_{v\in\Omega\setminus S}\frac{\mathrm{Br}(X_{k_v})}{\mathrm{im}\,\mathrm{Br}(k_v)}\right)$$ 
\vspace{-10pt}

\noindent
and also $\Be(X)$, $\Be_\omega(X)$ and $\Be_f(X)$ in analogy with the Tate-Shafarevich kernels of Section 1.
\end{defn}

Suppose that $X(\mathbf A)\neq\varnothing$. Then $X(\mathbf A^S)\neq\varnothing$ and the map $X(\mathbf A)\rightarrow X(\mathbf A_S)$ is a surjection for every $S\subseteq\Omega$. Also, $X(k_v)\neq\varnothing$ so that the maps $\mathrm{Br}(k_v)\rightarrow\mathrm{Br}(X_{k_v})$ are injections (any element of $X(k_v)$ gives a left inverse). If $A$ represents an element of $\Be(X)$, then $A_v$ lies by definition in the image of this map for every $v$ and $A(P_v)$ is thus independent of the choice of $P_v$.

It follows that the value of $\langle (P_v),A\rangle_{BM}$ is independent of the choice of $(P_v)\in X(\mathbf{A})$. However, at least one such choice exists by assumption, and since $\mathrm{Br}(k)$ belongs to the right kernel of $\langle (P_v),A\rangle_{BM}$ by the Brauer-Hasse-Noether theorem, this implies the existence of a well-defined map $\Be(X)\rightarrow\mathbf Q/\mathbf Z$. If this map is nontrivial, then $X(k) = \varnothing$ (otherwise we may use any $P\in X(k)$ in its construction, and we end up with a zero map). We may therefore call it the \textit{Brauer-Manin obstruction to the Hasse principle given by $\Be(X)$}.

More generally, if $A$ represents an element of $\Be_S(X)$ for some $S\in\Omega$, the value of $\langle (P_v),A\rangle_{BM}$ depends only on ($A$ and) the components $P_v$ for $v\in S$. This can be written as a diagram 
\vspace{-4pt}

\begin{center}\begin{tikzcd}[column sep=tiny]
    X(\mathbf A)\arrow[d, two heads]
\hspace{-5pt}&\hspace{-10pt}{\times}\hspace{-10pt}&\hspace{-5pt}
    \dfrac{\mathrm{Br}(X)}{\mathrm{im}\,\mathrm{Br}(k)}
    \arrow[rrrrrrrrrrrr, "{\langle -,-\rangle_{BM}}"]
    &&&&&&&&&&&& \mathbf Q/\mathbf Z\arrow[d, equal]\\
    X(\mathbf A_S)
\hspace{-5pt}&\hspace{-10pt}{\times}\hspace{-10pt}&\hspace{-5pt}
    \Be_S(X)\arrow[u, hook]
    \arrow[rrrrrrrrrrrr]
    &&&&&&&&&&&& \mathbf Q/\mathbf Z
\end{tikzcd}\end{center}
\vspace{-2pt}

\noindent
and the induced function $X(\mathbf A_S)\rightarrow\Be_S(X)^*$ is nonzero only if strong approximation for $S$ fails. It is the \textit{Brauer-Manin obstruction to strong approximation with respect to $S$ given by $\Be_S(X)$}. The statement for $\Be$ can be seen as the case $S = \varnothing$ (where $X(\textbf{0}) = \ast$ for the zero ring $\textbf{0}$).\;\;\;\;\;\;

Taking the inverse limit of $X(\mathbf A_S)\rightarrow\Be_S(X)^*$ over finite $S$, we arrive at $X(k_\Omega)\rightarrow\Be_\omega(X)^*$, the \textit{Brauer-Manin obstruction to weak approximation given by $\Be_\omega(X)$}. Note that, although~this pairing is defined for all points in $X(k_\Omega)$, we do not claim that it exists unless $X(\mathbf A)\neq\varnothing$. By Remark \ref{remgeomint}, this note is important only when $X$ is not geometrically integral: In our paper,~this situation occurs when $X$ is a torsor of a nonsmooth algebraic group.

We may similarly consider the map $X(k_{\Omega_f})\rightarrow\Be_f(X)^*$ in characteristic $0$ (otherwise $\Omega_f = \Omega$ and $\Be_f = \Be_\omega$).
In the following two sections, we prove that the obstructions related to $\Be$, $\Be_f$ and $\Be_S$ (when $\Omega\neq S\subseteq\Omega_f$) are the only ones for homogeneous spaces of commutative affine algebraic groups over $k$. These are Theorems \ref{thmmainglobal} and \ref{thmmainlocal}.
The main idea of these proofs is to relate the Brauer-Manin pairing to some form of the Poitou-Tate pairing via a comparison~map $\Sh^2(\widehat{G})\rightarrow\Be(X)$, where $X$ is a torsor of $G$, which we construct in the remainder of this section.

\begin{rem}
The notation for $\Be$ and $\Be_S$ is not completely consistent. Some authors (notably~in \cite{San81}, \cite{Brv96}, \cite{Sko01}) working over number fields $k$ often replace $\mathrm{Br}(X)$ in the definitions of $\Be$ and $\Be_S$ by its ``algebraic part'' $\mathrm{Br}_1(X)\coloneqq\mathrm{ker}(\mathrm{Br}(X)\rightarrow\mathrm{Br}(X_{k^s}))$. This is convenient, as the term $\mathrm{Br}_1(X)$ appears in the long exact sequence coming from the Hochschild-Serre spectral sequence. Then a variant of Rosenlicht's lemma in Galois cohomology is used to produce the comparison map mentioned above. In particular, the resulting map is then an isomorphism.

We cannot use Rosenlicht's lemma, as working with nonsmooth varieties and flat cohomology prevents us from reducing to Galois cohomology, therefore we do not bother with distinguishing the algebraic part of the Brauer group. However, we remark here that the two definitions of $\Be$ (and $\Be_S$, as soon as $S\neq\Omega$) do in fact agree: Indeed, given an element $A\in\mathrm{Br}(X)$ representing a class in $\Be_S(X)$, then $A_{k_v}\in\mathrm{Br}(k_v)$ for any fixed $v\notin S$ and thus $A_K = 0$ for a finite separable extension $K/k_v$. Because the extension $K/k$ is separable, a limiting argument gives a smooth finite-type $k$-algebra $R$ with $A_R = 0$. As $\mathrm{Spec}(R)$ admits a $k^s$-point, we conclude that $A_{k^s} = 0$.
\end{rem}

To construct the desired comparison map, we need to introduce an intermediate object:

\begin{defn}\label{defxprim}
Let $X$ be a torsor of a commutative affine algebraic group $G$ over $k$. Then by definition, there is an isomorphism $G\times X\xrightarrow{\;\sim\;} X\times X$, which gives a map:
$$q : X\times X\xrightarrow{\;\sim\;} G\times X\xrightarrow{\mathrm{pr}_G} G,\;\;\;\; q(x_1,x_2) = x_1-x_2$$
Let $R$ be a $k$-algebra. We write $X'(R)$ for the set of all $\mathrm{Sch}/R$ morphisms $f : X_R\rightarrow\mathbf G_{\mathrm{m},R}$ such that there exists a $\mathrm{Sch}/R$ morphism $\tilde{f} : G_R\rightarrow\mathbf G_{\mathrm{m},R}$ which makes the diagram
\begin{center}\begin{tikzcd}
    X_R\times X_R\arrow[r, "f\times f"]\arrow[d, "q"] & \mathbf{G}_{\mathrm{m},R}\times \mathbf{G}_{\mathrm{m},R}\arrow[d, "\mathrm{mul}\,\circ\,(\mathrm{id}\times\mathrm{inv})"]\\
    G_R\arrow[r,  "\tilde{f}"] & \mathbf{G}_{\mathrm{m},R}
\end{tikzcd}\end{center}
commute. If such an $\tilde{f}$ exists, it's uniquely determined by $f$ because $q$ is faithfully flat. The identification of $f$ with $(f,\tilde{f})$ interprets $X'(R)$ as the pullback:
\begin{center}\begin{tikzcd}[column sep = huge]
    X'(R)\arrow[rr, "f\,\mapsto\,\tilde{f}"]\arrow[d, hook] & & \mathrm{Mor}_R(G_R, \mathbf{G}_{\mathrm{m},R})\arrow[d, "f\,\mapsto\,f\,\circ\,q"]\\
    \mathrm{Mor}_R(X_R, \mathbf{G}_{\mathrm{m},R})\arrow[rr,  "f\,\mapsto\,\mathrm{mul}\,\circ\,(f\,\times\,(\mathrm{inv}\,\circ\,f))"] & & \mathrm{Mor}_R(X_R\times X_R, \mathbf{G}_{\mathrm{m},R})
\end{tikzcd}\end{center}
This shows that $X' \hspace{-0.7pt}=\hspace{-0.7pt} \mathrm{ker}(\mathcal{Mor}(G, \mathbf{G}_{\mathrm{m}}){\times}\mathcal{Mor}(X, \mathbf{G}_{\mathrm{m}})\rightrightarrows\mathcal{Mor}(X{\times} X, \mathbf{G}_{\mathrm{m}}))$ is an fppf sheaf~on~$k$.
Here $\mathrm{Mor}$ (resp. $\mathcal{Mor}$) denotes the group (resp. sheaf) of scheme morphisms to a group scheme.
\end{defn}

If $X = G$, then the defining condition $f(g_1)-f(g_2) = \tilde{f}(g_1-g_2)$ immediately implies that $f(g) =  f(0)+\tilde{f}(g)$ and that $\tilde{f}$ is a homomorphism. Now, for an arbitrary torsor $X$, it's easy to see that $(f,\tilde{f})\in X'(R)$ gives $(\tilde{f},\tilde{f})\in G'(R)$ and hence $\tilde{f}$ must again be a homomorphism. This implies that $X'\rightarrow\mathcal{Mor}(G, \mathbf{G}_{\mathrm{m}})$ factors through $\widehat{G}$. Moreover, there is a canonical inclusion of $\mathbf{G}_{\mathrm{m}}$ in $X'$ as constant maps. By the observation above, the sequence
\vspace{-10pt}

\begin{equation}\label{eqxprim}
0\rightarrow \mathbf{G}_{\mathrm{m}}\rightarrow X'\rightarrow \widehat{G}\rightarrow 0
\end{equation}
\vspace{-15pt}

\noindent
is exact when $X = G$. However, since $X_{k'}$ becomes isomorphic to $G_{k'}$ in $G\mathrm{-Sch}/k'$ over some extension $k'/k$, the sequence must be exact for all $X$. Next, $k$ is global, so (by \cite{CF67}, Chapter VII, \textsection 11.4) we have $\mathrm{H}^3(k, \mathbf{G}_{\mathrm{m}}) = 0$ and the sequence $\mathrm{H}^2(k, \mathbf{G}_{\mathrm{m}})\rightarrow\mathrm{H}^2(k, X')\rightarrow\mathrm{H}^2(k, \widehat{G})\rightarrow 0$ is exact. Finally, this gives the following canonical compositions (where the second map comes from the long exact sequence of the Leray spectral sequence associated to $X\rightarrow k$ and $\mathbf{G}_{\mathrm{m}}$):
\vspace{-10pt}

$$\mathrm{H}^2(k, X')\longrightarrow\mathrm{H}^2(k, \mathcal{Mor}(X, \mathbf{G}_{\mathrm{m}}))\longrightarrow\mathrm{Br}(X)
\;\textrm{ and so }\;
\mathrm{H}^2(k, \widehat{G}) = \frac{\mathrm{H}^2(k, X')}{\mathrm{im}\,\mathrm{H}^2(k, \mathbf{G}_{\mathrm{m}})}\longrightarrow\frac{\mathrm{Br}(X)}{\mathrm{im}\,\mathrm{Br}(k)}$$

\begin{rem}
Here, the application of Rosenlicht's lemma in the number field case (which can be interpreted as saying that $0\rightarrow\mathbf{G}_{\mathrm{m}}\rightarrow\mathcal{Mor}(X, \mathbf{G}_{\mathrm{m}}){\times}\mathcal{Mor}(Y, \mathbf{G}_{\mathrm{m}})\rightarrow\mathcal{Mor}(X{\times} Y, \mathbf{G}_{\mathrm{m}})\rightarrow 0$ is a short exact sequence of \'etale presheaves over $k$ for $X,Y$ of finite type over $k$) corresponds~to~an isomorphism $\mathrm{H}^2(k, \mathcal{Mor}(X, \mathbf{G}_{\mathrm{m}}))\xrightarrow{\sim} \mathrm{H}^2(k, \widehat{G})$ for \'etale sheaves. Its inverse is then composed with the second map, coming from the Hochschild-Serre spectral sequence (see \cite{Sko01}, \textsection 6.2). 
\end{rem}

Returing to our setting: From the final map constructed above, we deduce, by functoriality~for $k$ and all $k_v$, the promised homomorphisms $\Phi_X : \Sh^2(\widehat{G})\rightarrow\Be(X)$ and $\Phi_{X,S} : \Sh^2_S(\widehat{G})\rightarrow\Be_S(X)$. It remains to connect them with the Brauer-Manin obstruction via the following statements:

\begin{lem}\label{lembmglob}
Let $X$ be a torsor of $G$ such that $X(\mathbf A)\neq\varnothing$. The exact sequence \eqref{eqxprim} induces~a natural map $\Sh^2(\widehat{G})\rightarrow\mathbf Q/\mathbf Z$ which agrees with the composition $\Sh^2(\widehat{G})\rightarrow\Be(X)\rightarrow\mathbf Q/\mathbf Z$~of~$\Phi_X$ with the Brauer-Manin obstruction map.
\end{lem}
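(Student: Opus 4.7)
The plan is to use the adelic point $(P_v) \in X(\mathbf{A})$ to simultaneously construct the natural map from \eqref{eqxprim} and recognise it as the Brauer-Manin composition. The key observation is that any $k_v$-point $P_v$ of $X$ yields a morphism $s_{P_v}: X'_{k_v} \to \mathbf{G}_{\mathrm{m}, k_v}$ of fppf sheaves, namely ``evaluation at $P_v$'', sending $f \in X'(R)$ to $f(P_{v, R}) \in R^{\times}$ for any $k_v$-algebra $R$. Since constant functions evaluate to themselves, $s_{P_v}$ is a retraction of the inclusion $\mathbf{G}_{\mathrm{m}, k_v} \hookrightarrow X'_{k_v}$ from \eqref{eqxprim}; this splits the sequence over $k_v$ and supplies a canonical lift $s_{P_v, *}(\beta_v) \in \mathrm{Br}(k_v)$ of every class $\beta_v \in \mathrm{H}^2(k_v, X')$ mapping to $0$ in $\mathrm{H}^2(k_v, \widehat{G})$.

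Given $\alpha \in \Sh^2(\widehat{G})$, I would first lift it to some $\beta \in \mathrm{H}^2(k, X')$ using the surjectivity of $\mathrm{H}^2(k, X') \to \mathrm{H}^2(k, \widehat{G})$ already noted in the construction of $\Phi_X$ (a consequence of $\mathrm{H}^3(k, \mathbf{G}_{\mathrm{m}}) = 0$). At each place, $\beta_v$ maps to $\alpha_v = 0$, so one obtains a canonical $\gamma_v \coloneqq s_{P_v, *}(\beta_v) \in \mathrm{Br}(k_v)$. Next I would verify that $\gamma_v = 0$ for almost all $v$ by spreading out $X$, $\beta$ and $(P_v)$ to a common integral model over a ring of $S$-integers, and that the resulting sum $\sum_v \mathrm{inv}_v(\gamma_v) \in \mathbf{Q}/\mathbf{Z}$ is independent of the choice of lift $\beta$: any two lifts differ by an element of $\mathrm{Br}(k)$, on which $\sum_v \mathrm{inv}_v$ vanishes by the Brauer-Hasse-Noether sequence. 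This gives the candidate for the natural map of the statement. Its (a priori unclear) independence of $(P_v)$ will follow from the identification with the Brauer-Manin composition below.

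To establish that identification, let $A \in \mathrm{Br}(X)$ represent $\Phi_X(\alpha) \in \Be(X)$, obtained by pushing $\beta$ through $\mathrm{H}^2(k, X') \to \mathrm{H}^2(k, \mathcal{Mor}(X, \mathbf{G}_{\mathrm{m}})) \to \mathrm{Br}(X)$. Since the Brauer-Manin obstruction map evaluates to $\sum_v \mathrm{inv}_v(P_v^* A_v)$, it suffices to check the local equality $\gamma_v = P_v^* A_v$ in $\mathrm{Br}(k_v)$ at every place. This reduces to a sheaf-level tautology over $k_v$: the two morphisms $X'_{k_v} \xrightarrow{s_{P_v}} \mathbf{G}_{\mathrm{m}, k_v}$ and $X'_{k_v} \hookrightarrow \mathcal{Mor}(X_{k_v}, \mathbf{G}_{\mathrm{m}, k_v}) \xrightarrow{P_v^*} \mathbf{G}_{\mathrm{m}, k_v}$ are both literally ``evaluate at $P_v$'', and functoriality of cohomology propagates this equality to $\mathrm{H}^2$ (with the Leray edge map absorbing the transition between $\mathrm{H}^2(k_v, \mathcal{Mor}(X, \mathbf{G}_{\mathrm{m}}))$ and $\mathrm{Br}(X_{k_v})$, whose outputs then pull back to $\mathrm{Br}(k_v)$ in the same way). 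The main obstacle I anticipate is the spreading-out step: making sure that, in the flat-cohomological and possibly non-smooth setting, one really can choose an integral model over which $\beta$ and the $P_v$ extend and at which $\gamma_v$ vanishes at almost every place, without the usual \'etale-cohomological tools.
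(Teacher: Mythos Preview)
Your approach is correct and reaches the same conclusion, but the paper organises the argument more diagrammatically. Instead of building local invariants $\gamma_v$ by hand via the evaluation retractions $s_{P_v}$, the paper assembles a single large commutative diagram with two parallel slices (one involving $\mathrm{Br}(X)$, the other $\mathrm{H}^2(-,X')$, each over $k$ and over all $k_v$), applies the snake lemma to each slice, and then uses functoriality of the snake lemma to show that the two resulting connecting homomorphisms $\Be(X)\to\prod_v\mathrm{Br}(k_v)/\mathrm{Br}(k)$ and $\Sh^2(\widehat G)\to\prod_v\mathrm{H}^2(k_v,\mathbf G_\mathrm{m})/\mathrm{H}^2(k,\mathbf G_\mathrm{m})$ are compatible via $\Phi_X$. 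The first of these is the Brauer-Manin obstruction by construction; the second is the ``natural map'' of the statement. The advantage of this packaging is that the connecting map lands a priori in the full \emph{product} modulo $\mathrm{Br}(k)$, and one only deduces afterwards that it factors through the direct sum, because the Brauer-Manin side already does.

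This means your spreading-out worry is a non-issue: simply reorder the argument and prove the local identity $\gamma_v = P_v^*A_v$ \emph{first}. Your sheaf-level tautology is correct (the compatibility of the Leray edge map with pullback by $P_v$ is the same functoriality-of-Leray step the paper uses in the proof of Lemma~\ref{lembmloc}), and once it is in place the finiteness of $\sum_v\mathrm{inv}_v(\gamma_v)$ is inherited for free from the well-definedness of the Brauer-Manin pairing (\cite{Poo17}, Proposition~8.2.1). No integral models or spreading out are required.
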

\begin{prf}
Consider the commutative diagram
\vspace{-33 pt}

\begin{center}\begin{tikzcd}[row sep = tiny, column sep = tiny]
      & &  & & \Be(X)\arrow[ddd, hook]\\
    & & &  & & \Sh^2(\widehat{G})\arrow[ddd, hook]\arrow[ul, "\Phi_X", swap]\\ \\
      \mathrm{Br}(k)\arrow[rr, hook]\arrow[ddd, hook] & & \mathrm{Br}(X)\arrow[rr, two heads]\arrow[ddd] & & \dfrac{\mathrm{Br}(X)}{\mathrm{Br}(k)}\arrow[ddd]\\
    & \mathrm{H}^2(k,\mathbf G_{\mathrm{m}})\arrow[rr, hook]\arrow[ddd, hook]\arrow[ul, equal] & & \mathrm{H}^2(k,X')\arrow[rr, two heads]\arrow[ddd]\arrow[ul] & & \mathrm{H}^2(k,\widehat{G})\arrow[ddd]\arrow[ul]\\ \\
      \prod_v\mathrm{Br}(k_v)\arrow[rr, hook]\arrow[ddd] & & \prod_v\mathrm{Br}(X_{k_v})\arrow[rr, two heads] & & \prod_v\dfrac{\mathrm{Br}(X_{k_v})}{\mathrm{Br}(k_v)}\\
    & \prod_v\mathrm{H}^2(k_v,\mathbf G_{\mathrm{m}})
    \arrow[rr, hook]\arrow[ddd]\arrow[ul, equal] & & \prod_v\mathrm{H}^2(k_v,X')\arrow[rr, two heads]\arrow[ul] & & \prod_v\mathrm{H}^2(k_v,\widehat{G})\arrow[ul]\\ \\
      \dfrac{\prod_v\mathrm{Br}(k_v)}{\mathrm{Br}(k)}\\
    & \dfrac{\prod_v\mathrm{H}^2(k_v,\mathbf G_{\mathrm{m}})}{\mathrm{H}^2(k,\mathbf G_{\mathrm{m}})}\arrow[ul, equal]
\end{tikzcd}\end{center}
\vspace{-50pt}

\begin{equation}\label{diagbmobstr}
\end{equation}
\vspace{10pt}

\noindent
with diagonal arrows induced by the construction of $\Phi_X$ and, for the lower right diagonal arrow, by $\mathrm{H}^3(k_v,\mathbf G_{\mathrm{m}}) = 0$ (local duality). It has exact rows by exactness of products. All maps drawn as injections or surjections are either already known to be so (by the Brauer-Hasse-Noether theorem or by the condition $X(\mathbf A)\neq\varnothing$), or their injectivity immediately follows from that of other maps in the diagram. Applying the snake lemma twice above, we get two connecting homomorphisms $\delta_{BM}:\Be(X)\!\rightarrow\!\prod_v\mathrm{Br}(k_v)/\mathrm{Br}(k)$ and $\delta:\Sh^2(\widehat{G})\!\rightarrow\!\prod_v\mathrm{H}^2(k_v,\mathbf G_{\mathrm{m}})/\mathrm{H}^2(k,\mathbf G_{\mathrm{m}})$. But $\delta_{BM}$ agrees with the construction of the Brauer-Manin obstruction related to $\Be(X)$, hence lands inside $\bigoplus_v\mathrm{Br}(k_v)/\mathrm{Br}(k)$.
By functoriality of the snake lemma, the following commutes:
\begin{center}\begin{tikzcd}[column sep = huge]
    \Sh^2(\widehat{G})\arrow[rd, dashed]\arrow[r, "\delta"]\arrow[d, "\Phi_X"] & \dfrac{\prod_v\mathrm{H}^2(k_v,\mathbf G_{\mathrm{m}})}{\mathrm{H}^2(k,\mathbf G_{\mathrm{m}})}\arrow[r, equal] & \dfrac{\prod_v\mathrm{Br}(k_v)}{\mathrm{Br}(k)}\\
    \Be(X)\arrow[r, "\textrm{BM obstruction}"] & \dfrac{\bigoplus_v\mathrm{Br}(k_v)}{\mathrm{Br}(k)}\arrow[ur, hook]\arrow[r, hook, "\sum_v\mathrm{inv}_v"] & \mathbf Q/\mathbf Z
\end{tikzcd}\end{center}
Clearly, $\delta$ lifts to the dashed arrow, which shows that it lands in the quotient of the direct sum. Its composition with the sum of the invariant maps gives the desired map.
\end{prf}


This lemma will be crucial in the next section and the statements about the Hasse principle. Meanwhile, we show another lemma to be used later for statements about strong approximation: Note that we have defined the obstruction to strong approximation without requiring $X(k)\neq\varnothing$, just to highlight that this condition is logically implied by the denseness of $X(k)$ in $X(\mathbf A_S)\neq\varnothing$. However, we lose no generality in assuming this and we will in fact need this condition to apply the theory of local duality. Therefore, we now assume $X(k)\neq\varnothing$ holds for the remainder of this section, where $X$ is a torsor of $G$. Equivalently, $X\simeq G$ over $k$, so that we may replace $X$ by $G$ from now on.

The short exact sequence $0\rightarrow \mathbf{G}_{\mathrm{m}}\rightarrow X'\rightarrow \widehat{G}\rightarrow 0$ splits canonically when $X = G$, hence the comparison map can actually be lifted to the composition
$$\phi : \mathrm{H}^2(k, \widehat{G})\longrightarrow\mathrm{H}^2(k, \mathcal{Mor}(G, \mathbf{G}_{\mathrm{m}}))\longrightarrow\mathrm{Br}(G)$$
which lands in $\mathrm{Br}(G)$ instead of $\mathrm{Br}(G)/\mathrm{im}\,\mathrm{Br}(k)$.
The final lemma of this section, to be used~in section $4$, is the following:

\begin{lem}\label{lembmloc}
Let $k_v$ be a completion of $k$. Then the local part at $v$ of the Brauer-Manin pairing commutes with the cup product (composed with $G\otimes\widehat{G}\rightarrow\mathbf G_{\mathrm m}$), in the following sense:
\begin{center}\begin{tikzcd}[column sep=tiny]
    \mathrm{H}^2(k_v,\widehat{G})\arrow[d, "\phi", swap]
\hspace{-5pt}&\hspace{-10pt}{\times}\hspace{-10pt}&\hspace{-5pt}
    \mathrm{H}^0(k_v,G)
    \arrow[d, equal]\arrow[rrrrrrrrrrrr, "\smile"]
    &&&&&&&&&&&& \mathrm{H}^2(k_v,\mathbf G_{\mathrm m})
    \arrow[d, equal]\arrow[rrrr, "\mathrm{inv}_v"]
    &&&& \mathbf Q/\mathbf Z\arrow[d, equal]\\
    \mathrm{Br}(G_{k_v})
\hspace{-5pt}&\hspace{-10pt}{\times}\hspace{-10pt}&\hspace{-5pt}
    G(k_v)
    \arrow[rrrrrrrrrrrr, "\mathrm{eval}"]
    &&&&&&&&&&&& \mathrm{Br}(k_v)
    \arrow[rrrr, "\mathrm{inv}_v"]
    &&&& \mathbf Q/\mathbf Z
\end{tikzcd}\end{center}
\end{lem}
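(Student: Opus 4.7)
The plan is to lift $\phi$ to a cup product identity on $G_{k_v}$, after which the lemma becomes a formal consequence of naturality. Concretely, let $\pi\colon G_{k_v}\to\mathrm{Spec}(k_v)$ be the structural map and $\mathrm{id}_G\in G(G_{k_v})=\mathrm{H}^0(G_{k_v},G)$ the tautological class. The central claim is that
\[
\phi(\alpha) \;=\; \pi^*\alpha \smile \mathrm{id}_G \;\;\text{in}\;\; \mathrm{Br}(G_{k_v}), \qquad \alpha\in\mathrm{H}^2(k_v,\widehat G),
\]
with the cup product formed against the canonical evaluation pairing $\widehat G\otimes G\to\mathbf G_{\mathrm m}$.

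Granted this, the lemma follows at once. The map $\mathrm{eval}\colon\mathrm{Br}(G_{k_v})\times G(k_v)\to\mathrm{Br}(k_v)$ in the bottom row is by definition the pullback $(A,g)\mapsto g^*A$, and the point $g\in G(k_v)$ is a section of $\pi$. Hence using $g^*\pi^*=\mathrm{id}$, $g^*\mathrm{id}_G=g$, and the compatibility of $\smile$ with pullback,
\[
g^*\phi(\alpha) \;=\; (g^*\pi^*\alpha)\smile(g^*\mathrm{id}_G) \;=\; \alpha\smile g \;\in\; \mathrm{H}^2(k_v,\mathbf G_{\mathrm m}),
\]
and applying $\mathrm{inv}_v$ produces the claimed commutativity.

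To prove the factorization of $\phi$, I would unwind its construction: it is the composition $\mathrm{H}^2(k_v,\widehat G)\to\mathrm{H}^2(k_v,\mathcal{Mor}(G,\mathbf G_{\mathrm m}))\to\mathrm{Br}(G_{k_v})$, where the first arrow is induced by the canonical splitting $\widehat G\hookrightarrow\mathcal{Mor}(G,\mathbf G_{\mathrm m})$ of \eqref{eqxprim} at $X=G$, and the second is the Leray edge map for $\pi$, using $\pi_*\mathbf G_{\mathrm m}=\mathcal{Mor}(G,\mathbf G_{\mathrm m})$. Represent $\alpha$ by an fppf \v{C}ech $2$-cocycle $\alpha_{ijk}\colon U_{ijk}\times G\to\mathbf G_{\mathrm m}$ (a character in the $G$-variable) on some cover $\{U_i\to\mathrm{Spec}(k_v)\}$. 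Its image in $\mathrm{Br}(G_{k_v})$ under the edge map is represented by the same cocycle, now indexed by the pulled-back cover $\{(U_i)_G\to G_{k_v}\}$, on which $(U_i)_G\times_{G_{k_v}}(U_j)_G\times_{G_{k_v}}(U_k)_G=U_{ijk}\times G$. Computing $\pi^*\alpha\smile\mathrm{id}_G$ in \v{C}ech cohomology yields the same $2$-cocycle: the pullback $\pi^*\alpha$ is again represented by $\alpha_{ijk}$, and cupping with $\mathrm{id}_G$ via the evaluation pairing simply evaluates the character $\alpha_{ijk}$ along the identity on its $G$-variable, giving back $\alpha_{ijk}$.

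The principal technical difficulty is the \v{C}ech-level identification of the Leray edge map in the fppf topology, since fppf covers do not admit cofinal subsystems adapted to ordinary \v{C}ech cohomology; this can be bypassed either by working with hypercovers or by characterizing the edge map through the universal property that, on sheaves of the form $\pi^*F$, it recovers $\pi^*$ when composed with the adjunction unit $F\to\pi_*\pi^*F$. Either way, once this compatibility is in place the remaining steps are purely formal naturality of the cup product, with sign and ordering conventions fixed by the \v{C}ech formula.
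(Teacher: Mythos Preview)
Your argument is correct. The key identity $\phi(\alpha)=\pi^*\alpha\smile\mathrm{id}_G$ does hold: once one knows that the Leray edge map $\mathrm{H}^2(k_v,\pi_*\mathbf G_{\mathrm m})\to\mathrm{H}^2(G_{k_v},\mathbf G_{\mathrm m})$ is ``$\pi^*$ followed by the counit $\pi^*\pi_*\mathbf G_{\mathrm m}\to\mathbf G_{\mathrm m}$'', the identity reduces to the observation that the restriction of this counit to $\widehat{G}\subset\pi^*\mathcal{Mor}(G,\mathbf G_{\mathrm m})$ is exactly $\chi\mapsto\chi(\mathrm{id}_G)$, i.e.\ cup with $\mathrm{id}_G$ in degree~$0$. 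Your deduction of the lemma by pulling back along $g$ is then immediate.

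The paper's proof proceeds slightly differently: rather than establishing the universal identity over $G_{k_v}$, it fixes $g\in G(k_v)$ from the outset and checks commutativity of two squares over $k_v$. The top square (cup with $g$ versus precomposition by $g$) is verified at the level of sheaf morphisms on $\mathrm{Spec}\,k_v$; the bottom square (edge map versus $g^*$) follows from functoriality of the Leray spectral sequence under the map $g:\mathrm{Spec}\,k_v\to G_{k_v}$. The content is the same as yours, but the pointwise organization sidesteps any need to compute or characterize cohomology on $G_{k_v}$, and in particular avoids the \v{C}ech-versus-derived issue you flag. Your universal formulation, on the other hand, yields the clean formula $\phi(\alpha)=\pi^*\alpha\smile\mathrm{id}_G$, which is pleasant to have on record and makes the specialization to any $g$ a one-line naturality argument.
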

\begin{prf}
Unpacking the construction of the new comparison map $\phi$, we see that it is sufficient to show commutativity of the following diagram for a fixed element $g\in G(k_v)$
\begin{center}\begin{tikzcd}[column sep=tiny]
    \mathrm{H}^2(k_v,\widehat{G})\arrow[d]\arrow[rrrrrrrrrrrr, "\;- \;\smile\; g"]
    &&&&&&&&&&&& \mathrm{H}^2(k_v,\mathbf G_{\mathrm m})
    \arrow[d, equal]\\
    \mathrm{H}^2(k_v, \mathcal{Mor}(G, \mathbf{G}_{\mathrm{m}}))\arrow[d]\arrow[rrrrrrrrrrrr, "- \;\circ\; g\;"]
    &&&&&&&&&&&& \mathrm{H}^2(k_v, \mathcal{Mor}(k_v, \mathbf{G}_{\mathrm{m}}))
    \arrow[d, equal]\\
    \mathrm{Br}(G_{k_v})\arrow[rrrrrrrrrrrr, "g^*"]
    &&&&&&&&&&&& \mathrm{Br}(k_v)
\end{tikzcd}\end{center}
where $\mathcal{Mor} = \mathcal{Mor}_{k_v}$ is the sheaf of morphisms of schemes over $k_v$. The top row is given by the cup product with $g$, the middle rows by precomposition of morphisms with $g$ (Yoneda pairing), and the final row by evaluation of the Brauer group at $g$. 

To show commutativity, first fix an element $g\in \mathrm{H}^0(k_v,G)$, seen as a morphism $\mathrm{Spec}\,k_v\rightarrow G$. By definition, the cup product in degree $0$ is given simply by composition, hence we have the following commutative square of sheaf morphisms:
\begin{center}\begin{tikzcd}
    \widehat{G}\arrow[d, hook]\arrow[r, "\;- \;\smile\; g"]
    & \mathbf G_{\mathrm m}\arrow[d, equal]\\
    \mathcal{Mor}(G, \mathbf{G}_{\mathrm{m}})\arrow[r, "- \;\circ\; g\;"]
    & \mathcal{Mor}(k_v, \mathbf{G}_{\mathrm{m}})
\end{tikzcd}\end{center}\noindent
We get commutativity of the first two rows in the diagram by taking second cohomology groups.
Next, pulling back by $g$ induces the following map of Leray spectral sequences,
\begin{center}
\;\;\;\;\;\;\;\;\;\;\;\;\;\;\;\begin{tikzcd}
\!\!\!\!\!\!\!\!\!\!\!\!\!\!\!\!\!\!\!\!\!\!\!\!\!
    \,E^{p,q}_2\coloneqq \mathrm{H}^p(k_v,\mathcal{Mor}^q(G,\mathbf G_{\mathrm m}))\arrow[r, Rightarrow]\arrow[d] & R^{p+q}f_*(\mathbf G_{\mathrm m,\,G_{k_v}})\arrow[d]\\
\!\!\!\!\!\!\!\!\!\!\!\!\!\!\!\!\!\!\!\!\!\!\!\!\!
    'E^{p,q}_2\coloneqq \mathrm{H}^p(k_v,\mathcal{Mor}^q(k_v,\mathbf G_{\mathrm m}))\arrow[r, Rightarrow] & R^{p+q}\mathrm{id}_{k_v,*}(\mathbf G_{\mathrm m})
\end{tikzcd}\end{center}\noindent
where $f : G_{k_v}\rightarrow k_v$ is the structure morphism. 
The ladder of their associated exact sequences immediately gives us commutativity of the bottom half of the diagram (where the edge maps are vertical). It remains to see that the map $\mathrm{H}^2(k_v, \mathcal{Mor}(k_v, \mathbf{G}_{\mathrm{m}}))\rightarrow\mathrm{Br}(k_v)$ induced by $'E_r$ is indeed the canonical identification, but this holds because the spectral sequence $'E_r$ is actually constant for $r\geq 2$.
\end{prf}
\section{Global duality and obstruction to the Hasse principle}

Let $X$ be a homogeneous space of a commutative affine algebraic group $G$ over $k$. We want~to show that the Brauer-Manin obstruction is the only obstruction to the Hasse principle on $X$. 

\begin{rem}
We claim that $X$ is also a \textit{principal} homogeneous space of a commutative affine algebraic group over $k$, i.e.\ a torsor. To see this, consider a finite field extension $k'/k$ over which there exists a point $x\in X(k')$. We write $\overline{H}\subseteq G_{k'}$ for the stabilizer of $x$. The $k$-forms $G$ and $X$, respectively of $\overline{G}\coloneqq G_{k'}$ and $\overline{X}\coloneqq X_{k'}$, define fppf descent data over $k'\otimes_k k'$ of~the~forms:
$$\varphi_G : \mathrm{pr}_1^*\overline{G}\rightarrow\mathrm{pr}_2^*\overline{G}
\;\;\;\textrm{ and }\;\;\;
\varphi_X : \mathrm{pr}_1^*\overline{X}\rightarrow\mathrm{pr}_2^*\overline{X}$$
Then $\varphi_G(\mathrm{pr}_1^*\overline{H})\subseteq\mathrm{pr}_2^*\overline{G}$ is the stabilizer of the point $\varphi_X(x\circ\mathrm{pr}_1)$. However, it must also be the stabilizer $\mathrm{pr}_2^*\overline{H}$ of $(x\circ\mathrm{pr}_2)$ since $G$ is commutative (indeed, these stabilizers must agree over~an fppf cover $R$ of $k'\otimes_k k'$ over which there exists $g\in G(R)$ such that $(x\circ\mathrm{pr}_1).g = (x\circ\mathrm{pr}_2)$).

This shows that $\varphi_G$ restricts to a descent datum $\varphi_H$ on $\overline{H}$ along which the inclusion $\overline{H}\subseteq\overline{G}$ descends to $k$. In particular, $X$ is a principal homogeneous space of $G/H$ for the associated $k$-form $H$ of $\overline{H}$.
\end{rem}

In view of this remark, we may assume from now on that $X$ is a torsor of $G$ over $k$. To study the Hasse principle, suppose that $X$ is \textit{locally trivial}, which is to say that $X(k_v)\neq\varnothing$ holds for all $v$. Equivalently, the class $[X]\in\mathrm{H}^1(k, G)$ lies in $\Sh^1(G)$, since a torsor is trivial over some base if and only it has a point over it. In fact, more is true, as $X(\mathbf A)\neq\varnothing$.
Indeed, the class $[X]$ of a locally trivial torsor is in the kernel of a composition of maps
$$\mathrm{H}^1(k, G)\rightarrow\mathrm{H}^1(\mathbf A, G)\rightarrow\prod\nolimits'_v\mathrm{H}^1(k_v, G)\hookrightarrow\prod\nolimits_v\mathrm{H}^1(k_v, G)$$
where $\prod'$ denotes a restricted product in the appropriate sense. The middle map is a bijection by Proposition 2.17 in \cite{CesPT}, therefore $[X]$ vanishes also in $\mathrm{H}^1(\mathbf A, G)$. It follows that there exists an adelic point $(P_v)\in X(\mathbf A)\subseteq\prod_v X(k_v)$. The Hasse principle then holds if and only if $[X] = 0$ in $\mathrm{H}^1(k, G)$, and we may apply the Brauer-Manin pairings constructed earlier.

In this section, we show that the only obstruction to the Hasse principle on $X$ is the~one~given by the group $\Be(X)$, as defined in the previous section. With this aim, we prove the compatibility (via the comparison map $\Phi_X$) of the obstruction with the global Poitou-Tate pairing 
$$\langle -,-\rangle_{PT} : \Sh^1(G)\times\Sh^2(\widehat{G})\longrightarrow\mathbf Q/\mathbf Z$$
which is defined using Čech cohomology in \cite{RosTD}, \textsection 5.13, for all commutative affine algebraic groups $G$. More precisely, we show that the composition $\Sh^2(\widehat{G})\rightarrow\Be(X)\rightarrow\mathbf Q/\mathbf Z$ of the map $\Phi_X$ and the obstruction map  agrees up to sign with $\langle [X],-\rangle_{PT}$.
Thus, when the Brauer-Manin obstruction vanishes, the nondegeneracy of the Poitou-Tate pairing (\cite{RosTD},~Theorem~1.2.10) immediately gives that the class $[X]\in\Sh^1(G)$ is trivial. Equivalently, $X$ has a $k$-point.
\smallskip

Before recalling Rosengarten's construction of this pairing, let us first note that we will also use Čech cohomology to prove this agreement and now check that we can indeed do so:
\begin{prop}\label{propcechcoh}
For a global field $k$, the canonical homomorphisms $\mathrm{\check H}^2(k,\mathbf G_{\mathrm{m}})\rightarrow\mathrm{H}^2(k,\mathbf G_{\mathrm{m}})$, $\mathrm{\check H}^2(k,X')\rightarrow\mathrm{H}^2(k,X')$ and $\mathrm{\check H}^2(k,\widehat{G})\rightarrow\mathrm{H}^2(k,\widehat{G})$ are isomorphisms. In case $X$ is locally trivial, the same is true over all completions $k_v$ of $k$. All three Čech cohomology groups can be computed using the (not generally fppf) single-element cover $\mathrm{Spec}(\overline{k})\rightarrow\mathrm{Spec}(k)$.
\end{prop}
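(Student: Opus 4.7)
My plan is to prove the three claims of the proposition in the order (iii), (i), (ii).

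For (iii), I argue by cofinality of finite field extensions among fppf covers of $\mathrm{Spec}(k)$. Any fppf cover $\mathrm{Spec}(A)\to\mathrm{Spec}(k)$ has $A$ a finite flat (hence Artinian) $k$-algebra, and its reduction $A_{\mathrm{red}}$ is a finite product of finite field extensions of $k$; a single finite normal extension $L/k$ containing all of them further refines the cover. Since each of $\mathbf{G}_{\mathrm m}$ and $\widehat{G}$ is representable by a $k$-scheme of finite presentation, and $X'$ is an equalizer of such representables, all three sheaves commute with filtered colimits of $k$-algebras. Writing $\overline{k}=\varinjlim_L L$ over finite extensions then yields $\check{C}^n(\overline{k}/k,\mathcal{F})=\varinjlim_L\check{C}^n(L/k,\mathcal{F})$ as complexes, and passing to cohomology gives $\check{\mathrm{H}}^n(\overline{k}/k,\mathcal{F})=\varinjlim_L\check{\mathrm{H}}^n(L/k,\mathcal{F})=\check{\mathrm{H}}^n(k,\mathcal{F})$.

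For (i), I would invoke the Čech-to-derived spectral sequence $E_2^{p,q}=\check{\mathrm{H}}^p(k,\mathcal{H}^q(\mathcal{F}))\Rightarrow \mathrm{H}^{p+q}(k,\mathcal{F})$, where $\mathcal{H}^q(\mathcal{F})$ denotes the sheafification on the fppf site of the presheaf $U\mapsto \mathrm{H}^q(U,\mathcal{F})$. The sheaf $\mathcal{H}^1(\mathcal{F})$ vanishes since every $\mathcal{F}$-torsor is fppf-locally trivial by definition; in the direct-limit description of Čech cohomology this vanishing propagates, giving $\varinjlim_\mathcal{U}\check{\mathrm{H}}^p(\mathcal{U},\mathcal{H}^1_{\mathrm{pre}})=0$ for all $p$. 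The resulting five-term exact sequence then yields the isomorphism $\check{\mathrm{H}}^2(k,\mathcal{F})\xrightarrow{\sim}\mathrm{H}^2(k,\mathcal{F})$, which we apply successively to $\mathcal{F}=\mathbf{G}_{\mathrm m}$, $X'$, $\widehat{G}$. For (ii), the same arguments apply verbatim over any completion $k_v$: the cofinality argument is identical, and the spectral-sequence argument is topology-independent. The local triviality of $X$ ensures $X(k_v)\neq\varnothing$, which is needed both to make the construction of $X'$ nonvacuous and to invoke the splitting of the exact sequence \eqref{eqxprim} over $k_v$.

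The main obstacle is the careful handling of the direct limit in the spectral-sequence argument: one must ensure that the sheaf vanishing $\mathcal{H}^1=0$ really forces the vanishing of both the kernel and the cokernel of $\check{\mathrm{H}}^2\to\mathrm{H}^2$ in the fppf setting, where covers are not as nicely behaved as in the Zariski or étale topologies. A more concrete fallback would be to establish (iii) first and then reduce (i) to the classical case $\mathcal{F}=\mathbf{G}_{\mathrm m}$ (for which both sides compute $\mathrm{Br}(k)$ via Galois descent on $\overline{k}/k$, noting that purely inseparable extensions contribute trivially to $\mathbf{G}_{\mathrm m}$-cohomology), by applying the five-lemma to the long exact sequences of $\check{\mathrm{H}}$ and $\mathrm{H}$ induced by \eqref{eqxprim}, together with the vanishing $\mathrm{H}^3(k,\mathbf{G}_{\mathrm m})=0$.
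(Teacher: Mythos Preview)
There are two genuine gaps.

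First, your argument for (iii) rests on the claim that any fppf cover of $\mathrm{Spec}(k)$ is $\mathrm{Spec}(A)$ with $A$ a \emph{finite} $k$-algebra. This is false: fppf means faithfully flat and of finite presentation, so for instance $\mathbb{A}^1_k\to\mathrm{Spec}(k)$ is an fppf cover, and finite field extensions are not cofinal among fppf covers in any naive sense. The paper does not attempt a direct cofinality argument; it simply cites a result of Rosengarten (Proposition~2.9.3 in \cite{RosTD}) which establishes, for the specific sheaves $\mathbf{G}_{\mathrm m}$ and $\widehat{G}$, that their fppf \v{C}ech cohomology is computed by the cover $\overline{k}/k$. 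That proof uses particular features of these sheaves, not a general refinement argument.

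Second, in your spectral-sequence approach to (i), the vanishing of $\mathcal{H}^1$ only kills the column $E_2^{p,1}$. This makes $\check{\mathrm H}^2\to\mathrm H^2$ injective (nothing hits $E_2^{2,0}$), but surjectivity still requires the vanishing of the subquotient $E_\infty^{0,2}$ of $\check{\mathrm H}^0(\mathcal H^2)$, which you have not addressed. Your fallback via the five-lemma on the sequence \eqref{eqxprim} is the right idea and is exactly what the paper does for $X'$, but it needs \emph{both} the $\mathbf{G}_{\mathrm m}$ case and the $\widehat{G}$ case as input; you cannot deduce two of the three isomorphisms from just one. The paper obtains the $\mathbf{G}_{\mathrm m}$ and $\widehat{G}$ cases from separate results in \cite{RosTD} (Propositions~2.9.6 and 2.9.9), then applies the five-lemma, using that the \v{C}ech complex sequence is short exact because \eqref{eqxprim} splits over $\overline{k}$, and that $\check{\mathrm H}^3(k,\mathbf{G}_{\mathrm m})=0$. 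Over $k_v$ the argument is simpler: local triviality of $X$ makes \eqref{eqxprim} split over $k_v$ itself, so the $X'$ case follows immediately from the other two.
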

\begin{proof}
The last statement is Proposition 2.9.3 in \cite{RosTD}, and the first and third maps are isomorphisms over any field by Propositions 2.9.6 and 2.9.9, ibid. The second map is thus also an isomorphism if the short exact sequence 
$0\rightarrow \mathbf{G}_{\mathrm{m}}\rightarrow X'\rightarrow \widehat{G}\rightarrow 0$ splits, which happens over all $k_v$ when $X$ is locally trivial.
It remains only to prove bijectivity of the second map~over~$k$:\;\;\;\;

The same exact sequence splits over $\overline{k}$, hence $\mathrm{\check C}^{\bullet}(\overline{k}/k,-)$ gives a short exact sequence of Čech complexes, and thus a long exact sequence in Čech cohomology. This sequence is compatible with the long exact sequence in derived-functor cohomology 
\vspace{-7pt}
\begin{center}\begin{tikzcd}
    \mathrm{\check H}^1(\overline{k}/k,\widehat{G})\arrow[r]\arrow[d, "\rotatebox{90}{$\sim$}", pos=0.4] & \mathrm{\check H}^2(\overline{k}/k,\mathbf G_{\mathrm{m}})\arrow[r]\arrow[d, "\rotatebox{90}{$\sim$}", pos=0.4] & \mathrm{\check H}^2(\overline{k}/k,X')\arrow[r]\arrow[d] & \mathrm{\check H}^2(\overline{k}/k,\widehat{G})\arrow[r]\arrow[d, "\rotatebox{90}{$\sim$}", pos=0.4] & 0\\
    \mathrm{H}^1(k,\widehat{G})\arrow[r]& \mathrm{H}^2(k,\mathbf G_{\mathrm{m}})\arrow[r] & \mathrm{H}^2(k,X')\arrow[r] & \mathrm{H}^2(k,\widehat{G})\arrow[r] & 0
\end{tikzcd}\end{center}
\vspace{-7pt}
by the delta-functoriality shown in \cite{RosTD}, Proposition F.2.1. The above diagram has exact rows since $\mathrm{\check H}^3(k,\mathbf G_{\mathrm{m}}) = 0$ by Lemma 5.11.1, ibid. The vertical map on the left is known to be an isomorphism in general. We finish with the 5-lemma.
\end{proof}

The global Poitou-Tate pairing $\Sh^1(G)\times\Sh^2(\widehat{G})\rightarrow\mathbf Q/\mathbf Z$ takes a pair $([X],A)$ to $\langle [X],A\rangle_{PT}$ that is defined in \cite{RosTD}, \textsection 5.13, as follows: Take Čech cocycles $\alpha\in \mathrm{\check Z}^1(k,G)$ and $\alpha'\in \mathrm{\check Z}^2(k,\widehat{G})$ representing $[X]$, $A$, respectively. For all $v$, there exist cochains $\beta_v\in \mathrm{\check C}^0(k_v,G)$ 
with $\alpha_v = d\beta_v$. 
Moreover, since $\mathrm{\check H}^3(k,\mathbf G_\mathrm{m}) = 0$, there is a cochain $h\in\mathrm{\check C}^2(k,\mathbf G_\mathrm{m})$ such that $dh = \alpha\!\smallsmile\!\alpha'$.~Then $dh_v = d(\beta_v\!\smallsmile\!\alpha'_v)$, defining elements $[(\beta_v\!\smallsmile\!\alpha'_v)-h_v]\in\mathrm{\check H}^2(k_v,\mathbf G_\mathrm{m})$ (cf.\ Remark \ref{rempt} below). The pairing $\langle [X],A\rangle_{PT}$ is well-defined as the sum of the corresponding invariants in $\mathbf Q/\mathbf Z$, i.e. the resulting sum is finite and its value independent of all choices made in the construction.
\smallskip

Let $\midotimes^4\overline{k} = \overline{k}\otimes_k\overline{k}\otimes_k\overline{k}\otimes_k\overline{k}$. By Proposition \ref{propcechcoh}, the above cup product $\alpha\!\smallsmile\!\alpha'$ is, explicitly, the image in $\mathbf G_\mathrm{m}(\midotimes^4\overline{k})$ of $\alpha\circ\mathrm{pr}_{1,2}\in G(\midotimes^4\overline{k})$ via the map induced on $\midotimes^4\overline{k}$-points by~the~morphism $\alpha'\circ\mathrm{pr}_{2,3,4}\in\widehat{G}(\midotimes^4\overline{k})$, where $\mathrm{pr}_J$ denotes respective projections from $(\mathrm{Spec}\,\overline{k})^4$ to $(\mathrm{Spec}\,\overline{k})^{|J|}$.\;\;\;\;\;\;\;\;

\begin{thm}\label{thmmainglobal}
Let $G$ be an affine commutative algebraic group and $X$ a locally-trivial $G$-torsor over $k$. If $(P_v)\in X(\mathbf A)$ is arbitrary, and if $\Phi_X : \Sh^2(\widehat{G})\rightarrow \Be(X)$ is the comparison map, then
$$\langle [X],A\rangle_{PT} = - \langle (P_v),B\rangle_{BM}$$ 
holds in $\mathbf Q/\mathbf Z$, for any $A\in\Sh^2(\widehat{G})$ and representative $B\in\mathrm{Br}(X)$ of $\Phi_X(A)$.

Consequently, for any homogeneous space $X$ of $G$ over $k$, the Brauer-Manin obstruction given by $\Be(X)$ is the only obstruction to the Hasse principle on $X$.
\end{thm}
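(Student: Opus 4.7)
The plan is to first establish the pairing identity by unwinding both sides in Čech cohomology, and then to deduce the Hasse principle statement from the nondegeneracy of Poitou-Tate duality. By Proposition \ref{propcechcoh}, I may compute all cohomology using the single-element cover $\mathrm{Spec}(\overline k)\rightarrow \mathrm{Spec}(k)$ (and its base changes to each $k_v$). Fix a 1-cocycle $\alpha\in G(\overline k\otimes_k\overline k)$ representing $[X]$ and a 2-cocycle $\alpha'\in \widehat{G}(\midotimes^3\overline k)$ representing $A$. The Poitou-Tate pairing $\langle [X],A\rangle_{PT}$ is then computed as recalled just above the theorem: choose $\beta_v\in G(k_v\otimes_k\overline k)$ with $d\beta_v = \alpha_v$ and $h\in \mathrm{\check C}^2(k,\mathbf G_\mathrm{m})$ with $dh = \alpha\smallsmile\alpha'$, and form $\sum_v\mathrm{inv}_v\bigl[(\beta_v\smallsmile\alpha'_v) - h_v\bigr]$.

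The key step is unwinding the construction of $\Phi_X$ in Čech terms. Lifting $\alpha'$ to a cochain $(f,\tilde f)\in X'(\midotimes^3\overline k)$, the defining compatibility of Definition \ref{defxprim} (through the map $q(x_1,x_2) = x_1-x_2$) produces, after applying $d$, a 3-cocycle in $\mathbf G_\mathrm{m}$ whose class in $\mathrm{H}^3(k,\mathbf G_\mathrm{m}) = 0$ vanishes, and whose further push to $\mathcal{Mor}(X,\mathbf G_\mathrm{m})$, composed with the Leray edge map $\mathrm{H}^2(k,\mathcal{Mor}(X,\mathbf G_\mathrm{m}))\rightarrow\mathrm{Br}(X)$, gives a representative $B$ of $\Phi_X(A)$. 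To evaluate $B$ at an adelic point $(P_v)$, I use that the diagram in Definition \ref{defxprim} says $f(x_1)\tilde f(x_1-x_2)^{-1} = f(x_2)$, so pulling $B$ back via $P_v$ and comparing with the local splitting $\beta_v$ of $\alpha$ determined by $P_v$ (viewing $P_v$ as a $0$-cochain once $X_{k_v}$ is trivialized by the point $P_v$ itself) produces, modulo coboundaries, exactly the cocycle $(\beta_v\smallsmile\alpha'_v) - h_v$ of the Poitou-Tate construction, up to the sign coming from $x_1-x_2$ versus the natural cup-product orientation.

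The main obstacle is this bookkeeping of cup-product signs and the careful identification of the snake-lemma connecting homomorphism from diagram \eqref{diagbmobstr} (which by Lemma \ref{lembmglob} governs the Brauer-Manin side) with the Poitou-Tate construction using the same short exact sequence $0\rightarrow\mathbf G_\mathrm{m}\rightarrow X'\rightarrow\widehat G\rightarrow 0$; conceptually everything is already in place, and the work is in tracking cochains through products of copies of $\overline k$. For the final assertion: if $X(\mathbf A)=\varnothing$ then $X(\mathbf A)^{\Be}=\varnothing$ and the implication $X(\mathbf A)^{\Be}\neq\varnothing\Rightarrow X(k)\neq\varnothing$ is vacuous; otherwise $X$ is locally trivial, $[X]\in\Sh^1(G)$, and for any $(P_v)\in X(\mathbf A)^{\Be}$ the pairing identity yields $\langle [X],A\rangle_{PT} = -\langle (P_v),\Phi_X(A)\rangle_{BM} = 0$ for every $A\in\Sh^2(\widehat G)$, so nondegeneracy of Poitou-Tate duality (\cite{RosTD}, Theorem 1.2.10) forces $[X] = 0$, i.e.\ $X(k)\neq\varnothing$.
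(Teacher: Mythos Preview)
Your overall strategy matches the paper's exactly: reduce via Lemma~\ref{lembmglob} and Proposition~\ref{propcechcoh} to a Čech-cocycle comparison between the snake-lemma connecting map $\delta$ and the Poitou--Tate recipe, then deduce the Hasse principle statement from nondegeneracy. The final paragraph is correct.

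However, the proposal is a plan rather than a proof: you yourself say ``the work is in tracking cochains'', and that tracking is precisely where the content lies. Concretely, you never specify $h$, only that it satisfies $dh = \alpha\smile\alpha'$; with an arbitrary such $h$ there is no reason the claimed local identity should hold cocycle-by-cocycle. The paper's key move is to fix a point $\overline b\in X(\overline k)$, take the lift $\gamma'\in\mathrm{\check Z}^2(k,X')$ of $\alpha'$ to be a \emph{cocycle} (not merely a cochain), and set $h\coloneqq\gamma'(\overline b)$. One then verifies by hand (i) that $dh = \alpha\smile\alpha'$, using $d\gamma'=0$ together with the defining relation $\widetilde{\gamma'}=\alpha'$, and (ii) that for each $v$, with $b_v=P_v\in X(k_v)$ and $\beta_v\coloneqq\overline b - b_v\in G(\overline{k_v})$, one has $(\beta_v\smile\alpha'_v)-h_v = \alpha'_v - \gamma'_v\circ B_v$ in $\mathrm{\check Z}^2(k_v,G')$. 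Since $[\alpha'_v]=0$ locally, the right side represents $-[\gamma'_v\circ B_v]$, which is exactly the negative of the $v$-component of the snake connecting map, whence the sign. Without these explicit choices and computations, your middle paragraph is an assertion, not an argument.

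Two smaller points. First, $\beta_v$ should live in $G(\overline{k_v})$ (the local Čech cover is $\mathrm{Spec}(\overline{k_v})\to\mathrm{Spec}(k_v)$), not in $G(k_v\otimes_k\overline k)$. Second, your remark about ``applying $d$'' to the lift to get a $3$-cocycle in $\mathbf G_\mathrm{m}$ is off-track: one wants a \emph{cocycle} lift of $\alpha'$ to $X'$, which exists because the sequence of Čech complexes is short exact (this is what the proof of Proposition~\ref{propcechcoh} establishes over $k$).
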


\begin{prf}
By Lemma \ref{lembmglob} and Proposition \ref{propcechcoh}, it suffices to show that $-\langle [X],-\rangle_{PT}$ is compatible with the connecting homomorphism coming from the following diagram in Čech cohomology:
\vspace{-17pt}

\begin{center}\begin{equation}\label{diagthmglob}
\begin{tikzcd}
    0\arrow[r] & \mathrm{\check H}^2(k,\mathbf G_{\mathrm{m}})\arrow[r]\arrow[d] & \mathrm{\check H}^2(k,X')\arrow[r]\arrow[d] & \mathrm{\check H}^2(k,\widehat{G})\arrow[r]\arrow[d] & 0\\
    0\arrow[r] & \prod_v\mathrm{\check H}^2(k_v,\mathbf G_{\mathrm{m}})\arrow[r]
    &  \prod_v\mathrm{\check H}^2(k_v,X')\arrow[r] &  \prod_v\mathrm{\check H}^2(k_v,\widehat{G})\arrow[r] & 0\\
\end{tikzcd}\end{equation}\end{center}
\vspace{-25pt}

\noindent
Indeed, both sides of the identity are given as images under $\sum\mathrm{inv}_v : \bigoplus_v \mathrm{\check H}^2(k_v,\mathbf G_\mathrm{m})\rightarrow\mathbf Q/\mathbf Z$. If we carefully handle the choices in both constructions, we can get the same classes on both sides in all $\mathrm{\check H}^2(k_v,\mathbf G_\mathrm{m})$, which will end the proof. Concerning notation, we write $\midotimes^3\overline{k}$ for $\overline{k}\otimes_k\overline{k}\otimes_k\overline{k}$ and similarly with $\overline{k_v}$ over $k_v$.

Fix algebraic closures of $k$ and all $k_v$. Fix a point $\overline{b}$ in $X(\overline{k})$. We write $\mathrm{pr}_i : (\mathrm{Spec}\,\overline{k})^2\rightarrow \mathrm{Spec}\,\overline{k}$ for $i = 1,2$. There is a unique element $\alpha\in G(\overline{k}\otimes\overline{k})$ such that $(\overline{b}\circ\mathrm{pr}_1)+\alpha = \overline{b}\circ\mathrm{pr}_2$ in $X(\overline{k}\otimes\overline{k})$; then $\alpha\in\mathrm{\check Z}^1(k,G)$, it represents the class $[X]\in\mathrm{\check H}^1(k,G)$. 
Next, fix points $b_v\in X(k_v)$ for all $v$. They give $G_{k_v}\mathrm{-Sch}/{k_v}$ isomorphisms $B_v : G_{k_v}\xrightarrow{\sim}X_{k_v}$ by taking $0$ to $b_v$. Finally, we can define $\beta_v\coloneqq\overline{b}|_{\overline{k_v}} - b_v|_{\overline{k_v}}\in G({\overline{k_v}})$ so that $d\beta_v = \alpha|_{\overline{k_v}} - 0 = \alpha_v$.

We can assume that the element $\alpha'\in \mathrm{\check Z}^2(k,\widehat{G})$ representing $A$ has a preimage $\gamma'\in \mathrm{\check Z}^2(k,X')$. Thus $\gamma'\in X'(\midotimes^3\overline{k})$, hence it's by definition a map $\gamma' : X_{\midotimes^3\overline{k}}\rightarrow\mathbf G_{\mathrm{m},\midotimes^3\overline{k}}$ such that $\widetilde{\gamma'} = \alpha'$, in the sense of Definition \ref{defxprim}. This defines elements $\gamma'_v\circ B_v\in\mathrm{\check Z}^2(k_v,G')$ (since $b_v\in X(k_v)\subseteq X(\midotimes^3\overline{k_v})$), where $B_v$ allows us to replace the bottom row of \eqref{diagthmglob} by the product of split exact sequences:
$$0\longrightarrow \mathrm{\check H}^2(k_v,\mathbf G_{\mathrm{m}})\longrightarrow \mathrm{\check H}^2(k_v, G')\longrightarrow \mathrm{\check H}^2(k_v,\widehat{G})\longrightarrow 0$$
Because $A\in\Sh^2(\widehat{G})$, we see that $[\alpha'_v] = 0\in\mathrm{\check H}^2(k_v,\widehat{G})$. This is the image of the class $[\gamma'_v\circ B_v]$, which is hence naturally in $\mathrm{\check H}^2(k_v,\mathbf G_{\mathrm{m}})$.

The split-exactness allows us to consider both $\mathbf G_{\mathrm{m}}(\midotimes^3\overline{k_v})$ and $\widehat{G}(\midotimes^3\overline{k_v})$ inside $G'(\midotimes^3\overline{k_v})$. We claim that we can choose $h\in\mathrm{\check C}^2(k,\mathbf G_{\mathrm{m}})$ such that $dh = \alpha\smallsmile\alpha'$ and
$$(\beta_v\!\smallsmile\!\alpha'_v)-h_v = \alpha'_v - \gamma'_v\circ B_v$$
in $\mathrm{\check Z}^2(k_v,G')$. This claim implies the statement of the theorem, because $[\alpha'_v] = 0$ and thus the two sides correspond to the two sides of the desired identity.

From this point onwards, we fix inclusions $\overline{k}\lhook\joinrel\xrightarrow{\;\mathrm{id}\otimes 1\;} \overline{k}\otimes\overline{k}\lhook\joinrel\xrightarrow{\;\mathrm{id}\otimes 1\otimes 1\;}
\midotimes^3\overline{k}\lhook\joinrel\xrightarrow{\;\mathrm{id}\otimes 1\otimes 1\otimes 1\;}
\midotimes^4\overline{k}$ and similarly for $\overline{k_v}$, so that an expression such as $\overline{b}|_{\overline{k}\otimes\overline{k}}$ is well-defined.
The element $\overline{b}$ gives a map $X'_{\overline{k}}\rightarrow\mathbf G_{\mathrm{m}, \overline{k}}$ by evaluation. We define $h\coloneqq\gamma'(\overline{b}) = \gamma'(\overline{b}|_{\midotimes^3\overline{k}})$ and check the desired properties in order. First, a straightforward computation (in which, again, $\mathrm{pr}_J$ denotes respective projections from $(\mathrm{Spec}\,\overline{k})^N$ to $(\mathrm{Spec}\,\overline{k})^{|J|}$) gives, because $d\gamma' = 0$, that:
\vspace{-15pt}

\begin{align*}
    dh = d(\gamma'(\overline{b})) - (d\gamma')(\overline{b}) &= \sum_{i=1}^4 (-1)^i \big(\gamma'(\overline{b}\circ\mathrm{pr}_1)\big)\circ\mathrm{pr}_{1,\ldots,\widehat{i},\ldots,4} - \sum_{i=1}^4 (-1)^i (\gamma'\circ\mathrm{pr}_{1,\ldots,\widehat{i},\ldots,4})(\overline{b}\circ\mathrm{pr}_1)\\
    &= (\gamma'\circ\mathrm{pr}_{2,3,4})(\overline{b}\circ\mathrm{pr}_2) - (\gamma'\circ\mathrm{pr}_{2,3,4})(\overline{b}\circ\mathrm{pr}_1)\\
    &= (\widetilde{\gamma'}\circ\mathrm{pr}_{2,3,4})(\overline{b}\circ\mathrm{pr}_2 - \overline{b}\circ\mathrm{pr}_1) = (\alpha'\circ\mathrm{pr}_{2,3,4})(\alpha\circ\mathrm{pr}_{1,2}) = \alpha\smallsmile\alpha'
\end{align*}
Second, we have for any algebra 
$R$ over $\midotimes^3\overline{k_v}$ and any $g\in G(R)$, that:
\begin{align*}
    \big(\alpha'_v-\gamma'_v\circ B_v\big)(g) &= \alpha'_v(g) - \gamma'_v(g+b_v)\\
    &= \alpha'_v(g) - \big(\widetilde{\gamma'_v}(g+b_v-\overline{b}) + \gamma'_v(\overline{b})\big)
    = \alpha'_v(\overline{b}-b_v) - \gamma'_v(\overline{b})
    = \alpha'_v(\beta_v) - h_v
\end{align*}
Note that the resulting cocycle is constant; confirmation that both sides are inside $\mathrm{\check Z}^2(k_v,\mathbf G_{\mathrm{m}})$. Finally, $\alpha'_v(\beta_v|_{\midotimes^3\overline{k}}) = \beta_v\!\smallsmile\!\alpha'_v$ since $\beta_v$ is a $0$-cocycle, which completes the calculation.
\end{prf}

Before ending this section, we return to a small detail omitted in the above proof:

\begin{rem}\label{rempt}
First, $\,\widehat{\!\widehat{G}}\cong G$ by double duality (Proposition 2.4.3 in \cite{RosTD}). The construction of the Poitou-Tate pairing in \cite{RosTD}, \textsection 5.13, is stated in a generality including any $\mathcal F\in\{G,\widehat{G}\}$ and there the bilinear map $\langle -,-\rangle_{PT} : \Sh^2(\mathcal F)\times\Sh^1(\widehat{\mathcal F})\longrightarrow\mathbf Q/\mathbf Z$ is defined by the sum:
$$\langle [\alpha],[\alpha']\rangle_{PT} = \sum\nolimits_v\mathrm{inv}_v(c_v)
\;\;\textrm{ where }\;\;c_v = \big[(\alpha_v\!\smallsmile\!\beta'_v)-h_v\big] = \big[(\beta_v\!\smallsmile\!\alpha'_v)-h_v\big]$$
Here the cochains $\beta_v\in \mathrm{\check C}^1(k_v,\mathcal F)$ (resp.\ $\beta_v'\in \mathrm{\check C}^0(k_v,\widehat{\mathcal F})$) satisfy $\alpha_v = d\beta_v$ (resp.\ $\alpha'_v = d\beta'_v$)~and the cochain $h\in\mathrm{\check C}^2(k,\mathbf G_\mathrm{m})$ satisfies $dh = \alpha\!\smallsmile\!\alpha'$. The two classes in the definition of $c_v$ agree since the difference $(\alpha_v\!\smallsmile\!\beta'_v)-(\beta_v\!\smallsmile\!\alpha'_v) = \mathrm d(\beta_v\!\smallsmile\!\beta'_v)$ is a coboundary.

In the discussion preceding Theorem \ref{thmmainglobal}, it was convenient for us to introduce the Poitou-Tate pairing with the reverse order of terms in the cup product, effectively redefining~$c_v$~to~equal
$$c'_v = \big[(\beta'_v\!\smallsmile\!\alpha_v)-h'_v\big] = -\big[(\alpha'_v\!\smallsmile\!\beta_v)-h'_v\big]$$
for $h'\in\mathrm{\check C}^2(k,\mathbf G_\mathrm{m})$ with $dh' = \alpha'\!\smallsmile\!\alpha$ (these two classes are again equal; the sign change comes from the graded Leibniz rule for the cup product). We claim that this reversal has no effect on the resulting pairing. To see this, consider the \textit{first higher cup product} originally introduced in \cite{Ste47} and given on Čech cochains over a $k$-scheme $S$ by:
$$\mathbin{\smallsmile_1} \,:\;\mathrm{\check C}^m(S,\mathcal F)\times\mathrm{\check C}^n(S,\widehat{\mathcal F})\longrightarrow\mathrm{\check C}^{m+n-1}(S,\mathbf G_\mathrm{m})$$
$$u\mathbin{\smallsmile_1}v\;\coloneqq\;\sum\nolimits_{i=0}^{m-1}\,(-1)^{(m-i)(n+1)}\left(v\circ\mathrm{pr}_{i+1,\,\ldots\,,\,i+n+1}\right)\left(u\circ\mathrm{pr}_{1,\,\ldots\,,\,i+1,\,i+n+1,\,\ldots\,,\,m+n}\right)$$
This is a functorial bilinear operation which moreover satisfies the identity
$$(u\smallsmile v)-(-1)^{mn}(v\smallsmile u) = (-1)^{m+n}\big({-}\mathrm d(u\mathbin{\smallsmile_1}v)+(\mathrm du\mathbin{\smallsmile_1}v)+(-1)^m(u\mathbin{\smallsmile_1}\mathrm dv)\big)$$
as shown in Theorem 5.1 of \cite{Ste47}. In our situation this gives (for $m = 2$ and $n = 0$)
\begin{align*}
    c_v - c'_v &= \big[(\alpha_v\!\smallsmile\!\beta'_v)-h_v\big] - \big[(\beta'_v\!\smallsmile\!\alpha_v)-h'_v\big]\\
    &= \big[{-}\mathrm d(\alpha_v\mathbin{\smallsmile_1}\beta'_v)+(0\mathbin{\smallsmile_1}\beta'_v)+(-1)^2(\alpha_v\mathbin{\smallsmile_1}\mathrm d\beta'_v)-h_v+h'_v\big] = \big[(\alpha_v\mathbin{\smallsmile_1}\alpha'_v)-h_v+h'_v\big]
\end{align*}
in $\mathrm{\check H}^2(k_v,\mathbf G_\mathrm{m})$. Another application of the same identity (for $m = 2$ and $n = 1$) shows that
$$\mathrm d\varepsilon \,=\, \mathrm d(\alpha\mathbin{\smallsmile_1}\alpha')-(\alpha\smallsmile\alpha')+(\alpha'\smallsmile\alpha) \,=\, (0\mathbin{\smallsmile_1}\alpha')+(-1)^2(\alpha\mathbin{\smallsmile_1}0) \,=\, 0$$
for $\varepsilon\coloneq(\alpha\mathbin{\smallsmile_1}\alpha')-h+h'$ and thus $\varepsilon\in\mathrm{\check Z}^2(k,\mathbf G_\mathrm{m})$. In particular, $c_v-c'_v = [\varepsilon]_v$ for all $v$, hence summing over these differences yields $0$ in $\mathbf Q/\mathbf Z$ by the Brauer-Hasse-Noether theorem.

\end{rem}
\section{Obstructions to weak and strong approximation}

Let $X$ be a torsor of a commutative affine group $G$ over $k$. In the study of approximation by rational points on $X$, we may suppose $X(k)\neq\varnothing$ and thus $X\simeq G$ over $k$. From now on, we let $X = G$ without loss of generality. 
Let $S\subseteq\Omega_f$ be a (not necessarily finite) set of finite places of the global field $k$. The aim of this section is to prove that the Brauer-Manin obstruction is the only one to weak (resp. strong) approximation on $G$ with respect to $\Omega_f$ (resp. to $S\neq\Omega$).\;\;\;

For this, define topological groups $\mathbf P_v$ as $G(k_v)$ if $v\in\Omega_f$ and $\mathrm{\widehat H}^0(k_v,G)$ if $v\in\Omega_\infty$, so that there is a local duality isomorphism $\mathbf P_{v,}\pro\rightarrow\mathrm{H}^2(k_v,\widehat{G})^D$ by \cite{RosTD}, Theorem 1.2.3~+~Appendix~G. We let $\mathbf P^S\coloneqq G(\mathbf A^{S\,\sqcup\,\Omega_\infty})\times \prod_{v\in\Omega_\infty}\mathbf P_v$ and also $\mathbf P\coloneqq \mathbf P^\varnothing$.
The following exact sequence is split,
$$0\longrightarrow \mathbf P^S\longrightarrow \mathbf P\longrightarrow G(\mathbf A_S)\longrightarrow 0$$
thus remains exact after taking its profinite completion, so we consider the following diagram,
\vspace{-17pt}

\begin{center}\begin{equation}\label{eq1}
\hspace{-20pt}\begin{tikzcd}
    & & 0\arrow[d] & 0\arrow[d]\\
    & & \mathbf P^S\ppro\arrow[r, "f^S"]\arrow[d, "i^S"] & \big(\mathrm{H}^2(k,\widehat{G})/\Sh_S^2(\widehat{G})\big)^*\arrow[d, "j"]\\
    0\arrow[r] & G(k)\pro\arrow[r]\arrow[d, equal] & \mathbf P\pro\arrow[r, "f"]\arrow[d, "p_S"] & \mathrm{H}^2(k,\widehat{G})^*\arrow[r]\arrow[d, "q"] & \Sh^2(\widehat{G})^*\arrow[r]\arrow[d, equal] & 0\\
    & G(k)\pro\arrow[r] & G(\mathbf A_S)\pro\arrow[r, "f_S"]\arrow[d] & \Sh_S^2(\widehat{G})^*\arrow[r]\arrow[d] & \Sh^2(\widehat{G})^*\arrow[r] & 0\\
    & & 0 & 0
\end{tikzcd}\hspace{-15pt}
\end{equation}\end{center}
\vspace{-3pt}

\noindent
in which the middle row is part of the exact Poitou-Tate sequence from \cite{RosTD}, Theorem~1.2.9 (and Appendix G). In particular, $f$ is defined as the sum over the local duality~maps.~Note~that the $\mathrm{H}^2$ and $\Sh^2$ groups are discrete torsion (ibid, Lemmas~3.2.1 and 3.5.1), and $A^*$ is canonically isomorphic (as an Abelian group) to the profinite group $A^D$ for any discrete torsion Abelian group $A$. We do not distinguish between them, even~when~taking topology into account.


The map $f_S$ is constructed from $f$, first by inducing a continuous map $G(\mathbf A_S)\rightarrow\Sh_S^2(\widehat{G})^*$ (since the local duality pairings for $v\notin S$ with elements of $\Sh_S^2(\widehat{G})$ are all trivial) and then by taking profinite completions. The existence of $f^S$ such that the entire diagram commutes follows by exactness of the two columns. Note that the maps $f$, $f_S$ and $f^S$ are all continuous.\;\;\;\;\;\;

Our first step towards studying approximation properties of $G$ is to deduce exactness of the bottom row of the diagram (the goal here is to study $G(\mathbf A_S)$; otherwise, the same proof would work without the assumption $S\subseteq\Omega_f$, but for a suitably defined $\mathbf P_S$ replacing $G(\mathbf A_S)$):

\begin{prop}\label{prop1}
The lower long row of diagram (\ref{eq1}) is also exact.
\end{prop}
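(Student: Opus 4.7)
My plan is to apply the snake lemma to the morphism of short exact columns of diagram~\eqref{eq1} with vertical maps $(f^S,f,f_S)$, and then prove surjectivity of $f^S$ separately. The snake lemma yields the exact sequence
\[
\ker f^S \to \ker f \to \ker f_S \to \mathrm{coker}\,f^S \to \mathrm{coker}\,f \to \mathrm{coker}\,f_S \to 0,
\]
in which the exact Poitou-Tate middle row of~\eqref{eq1} identifies $\ker f = G(k)\pro$ (via $\iota$) and $\mathrm{coker}\,f = \Sh^2(\widehat G)^*$; moreover, the composition $G(k)\pro \to \ker f_S$ factors as $\iota_S = p_S\circ\iota$, so its image is $\mathrm{im}(\iota_S)$.

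Next I observe that the induced map $\mathrm{coker}\,f^S\to\mathrm{coker}\,f$ is zero. It comes from $j$, and elements of $\mathrm{im}(j)$ are functionals on $\mathrm H^2(k,\widehat G)$ vanishing on $\Sh_S^2(\widehat G)\supseteq\Sh^2(\widehat G)$, hence lying in $(\mathrm H^2(k,\widehat G)/\Sh^2(\widehat G))^* = \mathrm{im}(f)$. Consequently, the snake sequence decomposes into the isomorphism $\Sh^2(\widehat G)^* = \mathrm{coker}\,f \xrightarrow{\sim}\mathrm{coker}\,f_S$, which together with $\pi\circ f_S\circ p_S = 0$ (from Poitou-Tate exactness at $\mathrm H^2(k,\widehat G)^*$) gives exactness of the bottom row at $\Sh_S^2(\widehat G)^*$ and at $\Sh^2(\widehat G)^*$, and the isomorphism $\ker f_S/\mathrm{im}(\iota_S)\xrightarrow{\sim}\mathrm{coker}\,f^S$, reducing the remaining exactness at $G(\mathbf A_S)\pro$ to the vanishing of $\mathrm{coker}\,f^S$, i.e., to surjectivity of $f^S$.

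The hard part is the surjectivity of $f^S$, which is not tautologically accessible: naively lifting $\phi\in(\mathrm H^2(k,\widehat G)/\Sh_S^2(\widehat G))^*$ to $\mathbf P\pro$ via the middle-row Poitou-Tate and trying to cut its $S$-components loops back to the exactness we are proving. Instead, I would extend $\phi$, using injectivity of $\mathbf Q/\mathbf Z$, to a homomorphism on the ambient group $\prod_{v\notin S}\mathrm H^2(k_v,\widehat G)$, read off local functionals $\phi_v\in\mathrm H^2(k_v,\widehat G)^D$ by restriction to each factor, and use Rosengarten's local duality (\cite{RosTD}, Theorem~1.2.3 and Appendix~G) to realize each $\phi_v$ as $\langle u_v,-\rangle_v$ for some $u_v\in\mathbf P_{v,}\pro$. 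The remaining task is then to show that the tuple $(u_v)_{v\notin S}$ assembles into a genuine element of $\mathbf P^S\ppro$ mapping to $\phi$ under $f^S$; this rests on a careful comparison of the profinite completion of the restricted adelic product $\mathbf P^S$ with the componentwise profinite completions (exploiting that $G(\mathcal O_v)$ is itself profinite at almost all finite $v$), together with the convergence of $\sum_{v\notin S}\langle u_v,A|_{k_v}\rangle_v$ as a finite expression for each $A$, already implicit in the definition of the Poitou-Tate pairing $f$.
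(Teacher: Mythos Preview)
Your snake-lemma reduction to the surjectivity of $f^S$ is correct and amounts to the same reduction the paper carries out by a direct diagram chase using the splitting $i_S:G(\mathbf A_S)\pro\hookrightarrow\mathbf P\pro$. The observation that $\mathrm{im}(j)\subseteq\mathrm{im}(f)$ is also right and cleanly isolates the remaining exactness.

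The gap is in your argument for the surjectivity of $f^S$. The extension of $\phi$ to $\tilde\phi$ on $\prod_{v\notin S}\mathrm H^2(k_v,\widehat G)$ via injectivity of $\mathbf Q/\mathbf Z$ is completely uncontrolled, and a homomorphism on an infinite product is \emph{not} determined by its restrictions to the factors: there exist nonzero $\tilde\phi$ vanishing on $\bigoplus_{v\notin S}\mathrm H^2(k_v,\widehat G)$ (hence with all $\phi_v=0$) yet nonzero on tuples $(A_v)_v$ with infinitely many nonzero entries --- and for general $G$ a global class $A\in\mathrm H^2(k,\widehat G)$ need not have almost-all-zero local images. So even granting that $(u_v)$ lies in $\mathbf P^S\ppro$, there is no reason for $f^S((u_v))$, which is built from the $\phi_v$, to recover $\phi=\tilde\phi|_{\mathrm H^2(k,\widehat G)}$. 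The ``convergence already implicit in the definition of $f$'' you invoke applies only to honest adelic elements $(g_v)$ with $g_v\in G(\mathcal O_v)$ almost everywhere, paired against classes unramified almost everywhere; your $u_v$ carry no such integrality, so neither the finiteness of $\sum_{v\notin S}\langle u_v,A_v\rangle$ nor its equality with $\tilde\phi((A_v)_v)$ is available.

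The paper sidesteps this by proving density rather than constructing a single preimage: for each finite subgroup $T\subseteq\mathrm H^2(k,\widehat G)$ one picks a finite set of places $S(T)$, disjoint from $S$, at which $T/(T\cap\Sh_S^2(\widehat G))$ injects into the product of local $\mathrm H^2$, dualizes, and uses local duality on this \emph{finite} product to produce a point of $\prod_{v\in S(T)}\mathbf P_v$ matching $\phi$ on $T$, then extends by zero to an honest element of $\mathbf P^S$. Density of $\mathbf P^S$ in $\ker(\mathrm H^2(k,\widehat G)^*\to\Sh_S^2(\widehat G)^*)$ together with compactness of $\mathbf P^S\ppro$ then yields surjectivity. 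The only new input beyond Rosengarten's $S=\varnothing$ case is that $S(T)$ can be chosen inside $\Omega\setminus S$, immediately from the definition of $\Sh_S^2$.
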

\begin{prf}
Exactness is obvious everywhere except at $G(\mathbf A_S)\pro$. For any $x_S\in G(\mathbf A_S)\pro$ such that $f_S(x_S) = 0$, we need to show that $x_S = p_S(x)$ for some $x\in G(k)\pro\subseteq \mathbf P\pro$.

If $i_S : G(\mathbf A_S)\pro\rightarrow \mathbf P\pro$ is the natural inclusion, then $f(i_S(x_S))\in \ker(q)$. We claim that $f^S$ is surjective, so that $f(i_S(x_S)) = j(f^S(x^S))$ for some $x^S\in \mathbf P^S\ppro$. Then $x = i_S(x_S)-i^S(x^S)$ is the desired element of $G(k)\pro$. To prove this surjectivity is equivalent to proving exactness of:
$$\mathbf P^S\ppro\xrightarrow{\;f\;\!\circ\;\! i^S\;}
\mathrm{H}^2(k,\widehat{G})^*\xrightarrow{\;\;\;\;\;\;\;\;\;} \Sh_S^2(\widehat{G})^*$$

It is equivalent to say that $\mathbf P^S\ppro$ has dense image in $\ker(\mathrm{H}^2(k,\widehat{G})^*\rightarrow\Sh_S^2(\widehat{G})^*)$ (into which it clearly lands). This is the strategy of Proposition 5.9.1 in \cite{RosTD}, in which the case $S = \varnothing$ is proven in positive characteristic. However, the same proof essentially holds for general $S$ with minimal modification. 
We recall it now, replacing $G(k_v)$ by $\mathbf P_v$ everywhere:

In the original proof, given an arbitrary finite subset $T$ of $\mathrm{H}^2(k, \widehat{G})$ and a map $\phi\in\mathrm{H}^2(k, \widehat{G})^*$ vanishing on $\Sh^2(\widehat{G})$, we want to show that there exists an adelic point $g\in\mathbf P$ such that the image of $g$ in $\mathrm{H}^2(k, \widehat{G})^*$ agrees with $\phi$ on $T$. 
Observe first that $T$ can be replaced, without~loss~of generality, by the finite subgroup it generates in the torsion group $\mathrm{H}^2(k, \widehat{G})$. Then, any~finite~set $S(T)\subset\Omega$ of places of $k$ is chosen such that the map $T/(T\cap \Sh^2(\widehat{G}))\rightarrow\prod_{v\in S(T)}\mathrm{H}^2(k_v, \widehat{G})$ is an injection. The dual map is hence surjective. We obtain $\big(\!\prod_{v\in S(T)}\mathbf P_v\big)\pro = \prod_{v\in S(T)}\mathrm{H}^2(k_v, \widehat{G})^*$~by local duality, and therefore $\prod_{v\in S(T)}\mathbf P_v$ has dense image in the finite group $\big(T/(T\cap \Sh^2(\widehat{G}))\big)^*$. In particular, there must exist a point $g_{S(T)}\in \prod_{v\in S(T)}\mathbf P_v$ with image $\phi|_T$. The desired adelic point $g\in\mathbf P$ is constructed by extending $g_{S(T)}$ by $0$ on all places $v\notin S(T)$. 
This shows that $\mathbf P$ has dense image in $\ker(\mathrm{H}^2(k,\widehat{G})^*\rightarrow\Sh^2(\widehat{G})^*)$.

For general $S$, we are given $\phi\in\mathrm{H}^2(k, \widehat{G})^*$ vanishing on $\Sh_S^2(\widehat{G})$ and we want to find a similar injective map on the modified quotient $T/(T\cap \Sh_S^2(\widehat{G}))$. We simply note that~we~can choose the set $S(T)$ disjoint from our fixed $S$. Indeed, for every $\alpha\in T\setminus\Sh_S^2(\widehat{G})$, we can choose some place $v = v(\alpha)\notin S$ with $\alpha_v\neq 0$, which exists by definition of $\Sh_S^2(\widehat{G})$, and define $S(T)$ as the collection of all $v(\alpha)$. 
The construction of $g\in\mathbf P$ in Rosengarten's above~proof~now gives $g_v = 0$ for all $v\notin S(T)$, so in particular for all $v\in S$. Therefore, $g$ is in $\mathbf P^S$, which is all that we needed to show.
\end{prf}

Next, consider the commutative diagram (in which the $S$-overline denotes closure in $G(\mathbf A_S)$)
\vspace{-33pt}

\begin{center}\begin{equation}\label{eq2}
\hspace{-20pt}\begin{tikzcd}
    0\arrow[r] & \overline{G(k)}^{_S}\arrow[r]\arrow[d, dashed] & G(\mathbf A_S)\arrow[r, "f'_S"]\arrow[d, "\varphi"] & \Sh_S^2(\widehat{G})^*\arrow[r, "g_S"]\arrow[d, equal] & \Sh^2(\widehat{G})^*\arrow[d, equal]\arrow[r] & 0\\
    0\arrow[r] & M\arrow[r] & G(\mathbf A_S)\pro\arrow[r, "f_S"] & \Sh_S^2(\widehat{G})^*\arrow[r, "g_S"] & \Sh^2(\widehat{G})^*\arrow[r] & 0
\end{tikzcd}\hspace{-15pt}
\end{equation}\end{center}
\vspace{-3pt}

\noindent
where $M$ is the image of $G(k)\pro\rightarrow G(\mathbf A_S)\pro$; the bottom row is hence exact. The first vertical map exists because $M = \ker(f_S)$ is closed in $G(\mathbf A_S)\pro$ and $G(k)$ maps into it via the canonical morphism $\varphi : G(\mathbf A_S)\rightarrow G(\mathbf A_S)\pro$.

\begin{prop}\label{prop2}
Suppose that $S\subseteq\Omega_f$ (only significant when $\mathrm{char}\,k = 0$) and $S\neq\Omega$ (only significant when $\mathrm{char}\,k > 0$). The top row of diagram (\ref{eq2}) is then exact.
\end{prop}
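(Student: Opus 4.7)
The strategy is to leverage the exactness of the bottom row (Proposition~\ref{prop1}) through the continuous vertical map $\varphi : G(\mathbf{A}_S) \to G(\mathbf{A}_S)\pro$ in diagram~(\ref{eq2}), verifying exactness at each of the four positions in turn. Exactness at the outer spots, $\overline{G(k)}^{_S}$ and $\Sh^2(\widehat{G})^*$, is respectively tautological and inherited directly from the bottom row, since $g_S$ is shared between the two rows. The real content lies in the two middle spots.

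For exactness at $\Sh^2_S(\widehat{G})^*$, the inclusion $\mathrm{im}(f'_S) \subseteq \ker(g_S)$ is clear from the commutativity of (\ref{eq2}). For the reverse, I would adapt the density argument from the proof of Proposition~\ref{prop1} with one key refinement: given $\xi \in \ker(g_S)$ and a finite subgroup $T \subseteq \Sh^2_S(\widehat{G})$, the vanishing of $\alpha_v$ for $\alpha \in T$ and $v \notin S$ allows the auxiliary set of places $S(T)$ in Rosengarten's construction to be chosen inside $S$. This produces an element of $G(\mathbf{A}_S)$ whose image under $f'_S$ agrees with $\xi$ on $T$, so $\mathrm{im}(f'_S)$ is dense in $\ker(g_S)$. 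Upgrading density to equality must then combine this with the bottom-row identity $\mathrm{im}(f_S) = \ker(g_S)$, the compactness of $G(\mathbf{A}_S)\pro$, and the left-half exactness (proved in tandem), to see that $\mathrm{im}(f'_S)$ is closed inside $\ker(g_S)$.

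For exactness at $G(\mathbf{A}_S)$, one inclusion $\overline{G(k)}^{_S} \subseteq \ker(f'_S)$ is formal: $G(k)$ lies in $\ker(f'_S)$ by Poitou-Tate reciprocity (visible in the middle row of (\ref{eq1})) and $\ker(f'_S)$ is closed by continuity of $f'_S$. The reverse inclusion is the heart of the matter: given $x \in \ker(f'_S)$, the bottom row's exactness identifies $\varphi(x) \in M$, which tells us only that $x$ is rationally approximable modulo every open finite-index subgroup of $G(\mathbf{A}_S)$. The principal obstacle, and the step I expect to be hardest, is to upgrade this profinite-topology approximation to an adelic-topology one: open subgroups of finite index do not in general form a neighborhood basis at the identity in $G(\mathbf{A}_S)$, as already the case $G = \mathbf{G}_\mathrm{m}$ shows at each finite place (where finite-index open subgroups must have large valuation component, while genuine neighborhoods of $1$ can be contained in the units). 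I expect the resolution to proceed place by place, invoking the local Tate duality isomorphism $G(k_v)\pro \cong \mathrm{H}^2(k_v,\widehat{G})^D$ from Rosengarten at each $v \in S$ to refine the profinite approximation at each place into the full $v$-adic topology, and then assembling the refinements across the restricted product defining $\mathbf{A}_S$.
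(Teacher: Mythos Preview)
Your proposal correctly identifies the skeleton of the argument and, crucially, puts your finger on the central difficulty at $G(\mathbf{A}_S)$: passing from approximation modulo finite-index open subgroups to approximation in the genuine adelic topology. However, your proposed resolution of this difficulty has a genuine gap, and the same gap reappears in your treatment of the spot $\Sh^2_S(\widehat{G})^*$.

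The missing ingredient is a global finiteness theorem: for any compact open subgroup $W\subseteq G(\mathbf A_S)$, the subgroup $G(k)+W$ has \emph{finite index} in $G(\mathbf A_S)$. This is Conrad's Theorem~1.3.1 in \cite{Con12} when $\mathrm{char}\,k>0$ and $S\neq\Omega$, and Borel's Theorem~5.1 in \cite{Brl63} when $\mathrm{char}\,k=0$ and $S\subseteq\Omega_f$; it is precisely why the hypotheses on $S$ in the statement matter. Since $S\subseteq\Omega_f$ makes $G(\mathbf A_S)$ locally compact and totally disconnected, compact open subgroups $W$ form a neighbourhood base of $0$. Hence the subgroups $G(k)+W$ are simultaneously open of finite index \emph{and} a neighbourhood base of $G(k)$, so
\[
\overline{G(k)}^{_S}\;=\;\bigcap_{W}\big(G(k)+W\big)\;=\;\bigcap_{U}\big(G(k)+U\big)\;=\;\varphi^{-1}(M),
\]
the last intersection over finite-index open $U$. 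This is exactly the bridge you were looking for. The same theorem also gives compactness of $G(\mathbf A_S)/\overline{G(k)}^{_S}$ (it is covered by finitely many cosets of the compact $W$), which is what upgrades your density of $\mathrm{im}(f'_S)$ in $\ker(g_S)$ to equality; your appeal to ``compactness of $G(\mathbf A_S)\pro$'' is not enough, since a subset of a compact set need not be closed.

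Your suggested route through local duality at each $v\in S$ cannot close the gap: the isomorphism $G(k_v)\pro\cong\mathrm H^2(k_v,\widehat G)^D$ speaks only about the profinite completion of $G(k_v)$ and offers no control over the difference between the profinite and $v$-adic topologies on $G(k_v)$ itself (your own $\mathbf G_{\mathrm m}$ example shows this). The needed input is irreducibly global, relating $G(k)$ to $G(\mathbf A_S)$; no amount of local information at the places in $S$ will produce it.
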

\begin{prf}
First, the compositions of adjacent maps are clearly $0$ everywhere. Note that the image of $f'_S = f_S\circ\varphi$ is dense in $\mathrm{im}(f_S) = \mathrm{ker}(g_S)$. Thus, to prove exactness at $\Sh_S^2(\widehat{G})^*$, we only have to show that $\mathrm{im}(f'_S)\simeq G(\mathbf A_S)/\overline{G(k)}^{_S}$ is compact. We will call this ``Claim 1''.

Apart from that, we only need to prove exactness at $G(\mathbf A_S)$, which is equivalent to saying that the inclusion $\overline{G(k)}^{_S}\hookrightarrow \ker(f_S\circ\varphi) = \varphi^{-1}(\ker (f_S)) = \varphi^{-1}(M)$ is an equality. The induced map $G(k)\pro\rightarrow G(\mathbf A_S)\pro$ of profinite completions is defined as the composition
$$G(k)\pro = \varprojlim\nolimits_V G(k)/V \xrightarrow{\;\;\;\;\;\;\;\;\;\;\;\;} \varprojlim\nolimits_U G(k)/(G(k)\cap U) \xrightarrow{\;\;\;\;\;\;\;\;\;\;\;\;} \varprojlim\nolimits_U G(\mathbf A_S)/U = G(\mathbf A_S)\pro$$
where the inverse limits are taken over all finite-index open subgroups $V$ of $G(k)$, resp. $U$ of $G(\mathbf A_S)$.
Moreover, the second arrow is injective (because so is $G(k)/(G(k)\cap U) \rightarrow G(\mathbf A_S)/U$ for all subgroups $U$ of $G(\mathbf A_S)$, and inverse limits are left exact), so $M\hookrightarrow G(\mathbf A_S)\pro$ also factors through it, and we have the following diagram,
\begin{center}\begin{tikzcd}
    \overline{G(k)}^{_S}\arrow[r, hook]\arrow[d] & \bigcap_U(G(k)+U)\arrow[r, hook]\arrow[d, dashed] & G(\mathbf A_S)\arrow[d, "\varphi"]\\
    M\arrow[r, hook] & \varprojlim_U G(k)/(G(k)\cap U)\arrow[r, hook] & G(\mathbf A_S)\pro
\end{tikzcd}\end{center}
where both the intersection and the inverse limit are indexed by the family of all finite-index open subgroups $U$ of $G(\mathbf A_S)$. The middle map is given by $(G(k)+U)/U\cong G(k)/(G(k)\cap U)$ for each $U$, and the right square is Cartesian. We claim that the map $\overline{G(k)}^{_S}\hookrightarrow \bigcap_U(G(k)+U)$ is in fact an equality, which then implies that $\overline{G(k)}^{_S} = \varphi^{-1}(M)$. We will call this ``Claim 2''.\;\;\;\;\;\;\;\;\;

Because $S$ contains no Archimedean places, the group $G(\mathbf A_S)$ is both locally compact and totally disconnected (it is a closed subspace of some $(\mathbf A_S)^N$ by definition; see Proposition 2.1 in \cite{ConAd}), hence by Theorem 7.7 in \cite{HR63} it admits a basis of neighborhoods of $0$ consisting of compact open subgroups $W$. Moreover, for any such $W$, the subgroup $G(k)+W$ is open~and of finite index in $G(\mathbf A_S)$ in each the remaining cases of the proposition:
\begin{itemize}
    \item for $\mathrm{char}\,k > 0$ and $S\neq\Omega$ by Theorem 1.3.1 in \cite{Con12} (where the theorem is stated for cofinite $S\neq\Omega$, but follows immediately for smaller $S$ and even noncompact open $W$)
    \item for $\mathrm{char}\,k = 0$ and $S\cap\Omega_\infty = \varnothing$ by Theorem 5.1 in \cite{Brl63} (with a similar remark)
\end{itemize}
This shows Claim 1, since finitely many cosets of any such compact group $W$ cover $G(\mathbf A_S)/G(k)$. Suppose that Claim 2 does not hold and so let $x\in\bigcap_U(G(k)+U)$ be such that there exists a compact open subgroup $W$ of $G(\mathbf A_S)$ for which $(x+W)\cap G(k) = \varnothing$. Now $U\coloneqq G(k)+W$~is~of finite index, but $x\notin G(k)+W = G(k)+U$, a contradiction.
\end{prf}

\begin{prop}\label{prop3}
The following sequence is exact, for $G(k_{\Omega_f}) = \prod_{v\in\Omega_f} G(k_v)$,
\vspace{-15pt}

\begin{center}\begin{equation}\label{eq3}
0\longrightarrow \overline{G(k)}\longrightarrow G(k_{\Omega_f})\xrightarrow{\;\;f'_f\;\;}\Sh_f^2(\widehat{G})^*
\end{equation}\end{center}\noindent
where the product is taken over all places $v$ of $k$, the overline denotes closure in the product topology and the map $f'_f$ is given by the inverse limit of $f'_S$ in (\ref{eq2}) for $S\subseteq\Omega_f$ finite.
\end{prop}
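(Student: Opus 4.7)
The plan is to derive the sequence as the inverse limit, over the directed set $\mathcal{S}$ of finite subsets $S\subseteq\Omega_f$ (ordered by inclusion), of the exact sequences
$$0 \longrightarrow \overline{G(k)}^{_S} \longrightarrow G(\mathbf A_S) \xrightarrow{\;f'_S\;} \Sh_S^2(\widehat{G})^*$$
provided by Proposition \ref{prop2}. The hypotheses of that proposition are automatic here: every finite $S\subseteq\Omega_f$ differs from $\Omega$ (a global field has infinitely many places) and avoids the Archimedean places by definition.

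For $S\subseteq T$ in $\mathcal{S}$, the transition data consist of the projection $G(\mathbf A_T)\to G(\mathbf A_S)$ together with the surjection $\Sh^2_T(\widehat{G})^*\twoheadrightarrow\Sh^2_S(\widehat{G})^*$ dual to the inclusion $\Sh^2_S(\widehat{G})\hookrightarrow\Sh^2_T(\widehat{G})$. Compatibility with $f'_T$ and $f'_S$ is immediate from the construction of these maps via local duality, since any element of $\Sh^2_S(\widehat{G})$, viewed inside $\Sh^2_T(\widehat{G})$, vanishes at every place $v\in T\setminus S$ and therefore its pairing with an adelic point in $G(\mathbf A_T)$ only involves the coordinates at places in $S$. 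Hence the sequences form an inverse system indexed by $\mathcal{S}$.

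Passing to the inverse limit, I identify each term. First, $\varprojlim_{S} G(\mathbf A_S) = \prod_{v\in\Omega_f} G(k_v) = G(k_{\Omega_f})$ with its product topology. Second, since the algebraic dual $(-)^*$ turns filtered colimits of abelian groups into inverse limits, $\varprojlim_{S}\Sh^2_S(\widehat{G})^* = \Sh^2_f(\widehat{G})^*$ by the very definition $\Sh_f^2(\widehat{G}) = \bigcup_{\textrm{finite }S\subseteq\Omega_f}\Sh^2_S(\widehat{G})$; the limit of the maps $f'_S$ is then $f'_f$ by construction. Left exactness of the inverse limit functor yields the desired sequence.

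The only point requiring a moment of thought, and the main obstacle, is the identification of $\varprojlim_{S}\overline{G(k)}^{_S}$ with $\overline{G(k)}$ in the product topology on $G(k_{\Omega_f})$. This reduces to the observation that $x\in G(k_{\Omega_f})$ lies in the closure of $G(k)$ iff every basic open neighborhood meets $G(k)$; since such neighborhoods are precisely the preimages under the projections $\pi_S : G(k_{\Omega_f})\to G(\mathbf A_S)$ of opens in $G(\mathbf A_S)$ for finite $S\subseteq\Omega_f$, this is equivalent to $\pi_S(x)\in\overline{G(k)}^{_S}$ for all such $S$, giving the desired identification.
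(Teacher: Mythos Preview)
Your proof is correct and follows essentially the same route as the paper's: the paper argues directly that $x\notin\overline{G(k)}$ forces $\pi_S(x)\notin\overline{G(k)}^{_S}$ for some finite $S\subseteq\Omega_f$ and hence $f'_f(x)\neq 0$, which is precisely an unwinding of your inverse-limit argument together with the identification $\varprojlim_S\overline{G(k)}^{_S}=\overline{G(k)}$. The paper even records your formulation as an equivalent restatement in Remark~\ref{reminvlim}.
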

\begin{prf}
Because $\Sh_f^2(\widehat{G})^* = \varprojlim_{\textrm{finite }S\subseteq\Omega_f}\Sh_S^2(\widehat{G})^*$, the image of $G(k)$ is trivial, which extends to its closure. Conversely, for an element $x = (x_v)\in G(k_{\Omega_f})$ outside $\overline{G(k)}$, there exists an open set $U = U_S\times G(k_{\Omega_f\setminus S})$ with $S\subseteq\Omega_f$ finite and $U_S$ open in $G(k_S)$ such that $(x+U)\cap G(k) = \varnothing$. Then $(x_v)_{v\in S}$ is not in the closure of $G(k)$ in $G(k_S)$, so it has nonzero image in $\Sh_S^2(\widehat{G})^*$ by the previous proposition. This map is compatible with the projection from $\Sh_f^2(\widehat{G})^*$, hence $x$ has also nonzero image in $\Sh_f^2(\widehat{G})^*$.
\end{prf}

\begin{rem}\label{reminvlim}
By left-exactness of inverse limits, the above proposition equivalently states that:\hspace{-5pt}
\vspace{-10pt}

$$\overline{G(k)} = \ker\left(G(k_{\Omega_f})\longrightarrow\Sh_f^2(\widehat{G})^*\right) = \varprojlim\nolimits_{\textrm{finite }S\subseteq\Omega_f}\overline{G(k)}^{_S}$$
\end{rem}

Finally, the exact sequences \eqref{eq2} and \eqref{eq3} allow us to deduce the announced statements on Brauer-Manin obstruction to strong and weak approximation. Here, $\overline{G(k)}^{_S}$ is as in \eqref{eq2}:

\begin{thm}\label{thmmainlocal}
Let $S\subseteq\Omega_f$ be a subset of finite places of $k$ such that $S\neq\Omega$. The two sequences 
$$0\longrightarrow \overline{G(k)}^{_S}\longrightarrow  G(\mathbf A_S)\longrightarrow\Be_S(G)^*$$
$$0\longrightarrow \overline{G(k)}\longrightarrow G(k_{\Omega_f})\longrightarrow\Be_f(G)^*$$
are exact sequences of pointed sets, where the $\Be$-groups are as in Section 2.

It follows in particular that, for a torsor $X$ of\, $G$ over $k$, the Brauer-Manin obstruction~to~weak approximation with respect to finite places (resp. strong approximation with respect to $S$) given by $\Be_f(X)$~(resp.~$\Be_S(X)$) is the only obstruction to this approximation on $X$.
\end{thm}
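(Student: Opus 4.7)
\begin{prfsk}
The plan is to combine Propositions \ref{prop2} and \ref{prop3}, which yield analogous exact sequences with $\Sh^2_S(\widehat{G})^*$ and $\Sh^2_f(\widehat{G})^*$ on the right, with the comparison map $\Phi_{G,S}:\Sh^2_S(\widehat{G})\to\Be_S(G)$ constructed in Section 2. Dualizing gives $\Phi_{G,S}^*:\Be_S(G)^*\to\Sh^2_S(\widehat{G})^*$, and the Brauer-Manin pairing yields a map $\mathrm{BM}:G(\mathbf A_S)\to\Be_S(G)^*$. The heart of the argument is to show that, up to a sign, the composition $\Phi_{G,S}^*\circ\mathrm{BM}$ equals the map $f'_S$ of diagram (\ref{eq2}).

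To verify this compatibility, fix $(P_v)\in G(\mathbf A_S)$ and $A\in\Sh^2_S(\widehat{G})$. Since $X = G$ here, the sequence \eqref{eqxprim} splits canonically, so $\Phi_{G,S}(A)\in\Be_S(G)$ lifts canonically to $\phi(A)\in\mathrm{Br}(G)$, which we use as a representative for the Brauer-Manin evaluation. Then $(\Phi_{G,S}^*\circ\mathrm{BM})((P_v))$ sends $A$ to $\sum_{v\in S}\mathrm{inv}_v(\phi(A)(P_v))$, while $f'_S((P_v))(A) = \sum_{v\in\Omega}\langle P_v, A_v\rangle_v$ collapses to a sum over $v\in S$ because $A_v = 0$ for $v\notin S$ by the defining property of $\Sh^2_S$. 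Lemma \ref{lembmloc} identifies each local term $\mathrm{inv}_v(\phi(A)(P_v))$ with the local duality pairing $\langle P_v, A_v\rangle_v$, so the two global sums coincide (the sign being absorbed as in Theorem \ref{thmmainglobal}).

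From this factorization, $\ker(\mathrm{BM})\subseteq\ker(f'_S) = \overline{G(k)}^{_S}$ by Proposition \ref{prop2}, while the reverse inclusion is automatic by continuity of the pairing and its vanishing on $G(k)$. This yields the first exact sequence. For the second sequence, passing to the inverse limit over finite $S\subseteq\Omega_f$ and using Remark \ref{reminvlim} together with the identification $\Be_f(G)^* = \varprojlim_{S}\Be_S(G)^*$ (dual to the filtered colimit $\Be_f(G) = \varinjlim_S\Be_S(G)$) reduces the claim to Proposition \ref{prop3}.

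Finally, for a general torsor $X$ of $G$: if a point $(P_v)\in X(\mathbf A_S)$ (resp.~$(P_v)\in X(k_{\Omega_f})$) is orthogonal to $\Be_S(X)$ (resp.~$\Be_f(X)$), then in particular it is orthogonal to the subgroup $\Be(X)$, so by Theorem \ref{thmmainglobal} the set $X(k)$ is non-empty; picking any $x_0\in X(k)$ identifies $X$ with $G$ as a $G$-torsor, and the desired approximation statement on $X$ is transported back from the corresponding exact sequence on $G$. The only genuine technical obstacle in this argument is the local compatibility identity used in the second paragraph, which is precisely the content of Lemma \ref{lembmloc}; the remainder is formal bookkeeping with duals and inverse limits.
\end{prfsk}
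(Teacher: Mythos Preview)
Your proposal is correct and follows essentially the same route as the paper: you reduce exactness at $G(\mathbf A_S)$ to the factorization $f'_S = \Phi_{G,S}^*\circ\mathrm{BM}$, verify this via Lemma~\ref{lembmloc}, and then invoke Propositions~\ref{prop2} and~\ref{prop3} plus an inverse limit for the second sequence---exactly as the paper does. The only cosmetic differences are that no sign actually arises here (Lemma~\ref{lembmloc} gives exact commutativity of the local squares; the sign in Theorem~\ref{thmmainglobal} came from the global Poitou--Tate construction, not the local cup product), and your added paragraph spelling out the reduction from a general torsor $X$ to $G$ via Theorem~\ref{thmmainglobal} makes explicit what the paper leaves to the reader.
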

\begin{prf}
The injectivity on the left is clear, as is that the composition of all adjacent maps is $0$. For the other direction it suffices, by Propositions \ref{prop2} and \ref{prop3}, to prove that the rightmost map in both sequences is a factor of the maps to $\Sh_S^2(\widehat{G})^*$ and $\Sh_f^2(\widehat{G})^*$, respectively. Consider
\vspace{-5pt}

\begin{center}\begin{tikzcd}
    G(\mathbf A)\arrow[r, "BM"]\arrow[d, two heads] & \left(\dfrac{\mathrm{Br}(G)}{\mathrm{Br}(k)}\right)^*\arrow[r, "\phi^*"]\arrow[d, two heads] & \mathrm{H}^2(k, \widehat{G})^*\arrow[d, two heads]\\
    G(\mathbf A_S)\arrow[r, "BM"] & \Be_S(G)^*\arrow[r, "\phi^*"] & \Sh_S^2(\widehat{G})^*
\end{tikzcd}\end{center}
\vspace{-3pt}

\noindent
where the left square is commutative by definition of Brauer-Manin obstruction with respect to $\Be_S(G)$, and the right square by functoriality of the comparison map $\phi$ constructed in Section 2. Let $A$ be an arbitrary element of $\mathrm{H}^2(k, \widehat{G})$ and $A_v$ its local images in $\mathrm{H}^2(k_v, \widehat{G})$. The composition of maps in the upper row sends an element $(g_v)\in G(\mathbf A)$ to the function which acts on $A$ as:
\vspace{-13pt}

$$\phi^*(\langle (g_v),-\rangle_{BM})(A) = \langle (g_v),\phi(A)\rangle_{BM} = \sum\nolimits_v\mathrm{inv}_v(g_v^*\,\phi(A_v)) = \sum\nolimits_v\mathrm{inv}_v(g_v\smile A_v)$$
\vspace{-14pt}

\noindent
where we've used Lemma \ref{lembmloc} in the last equality. However, this composition is then by definition exactly the~map $G(\mathbf A)\rightarrow G(\mathbf A)\pro\rightarrow\mathrm{H}^2(k, \widehat{G})^*$ coming from the diagram \eqref{eq1}. In particular, this shows that the composition in the lower row must be $G(\mathbf A_S)\rightarrow G(\mathbf A_S)\pro\rightarrow\Sh_S^2(k, \widehat{G})^*$, exactly as in \eqref{eq2}.

All maps in the lower row are induced by the upper one, hence we immediately get~the~desired factorization, which proves exactness of the first sequence in the statement. We get exactness of the second sequence too~by taking inverse limits over finite subsets $S$.
\end{prf}
\section{Finiteness theorems for $\Sh^2_S(\widehat{G})$ and $\Sh^2_f(\widehat{G})$}

In this section we will prove that the sequence \eqref{eq3} extends to an exact sequence
\vspace{-11pt}

\begin{equation}\label{eq4}
0\longrightarrow \overline{G(k)}\longrightarrow G(k_{\Omega_f})\xrightarrow{\;\;f'_f\;\;} \Sh_f^2(\widehat{G})^*\xrightarrow{\;\;g_f\;\;}  \Sh^2(\widehat{G})^*\longrightarrow 0
\end{equation}
\vspace{-11pt}

\noindent
where $\overline{G(k)}$ is the closure of $G(k)$ in $G(k_{\Omega_f}) = \prod_{v\in\Omega_f} G(k_v)$ with product topology. Surjectivity on the right is obvious. Thanks to Proposition \ref{prop3}, it remains only to show $\smash{\mathrm{im}(f'_f)\supseteq\mathrm{ker}(g_f)}$. This is immediate if $G(k_{\Omega_f})/\overline{G(k)}$ is compact (as in Proposition \ref{prop2}), but we will only deduce this compactness at the end, from the previous statement.

In characteristic $0$, even the larger quotient $\prod_{v\in\Omega} G(k_v)/\overline{G(k)}$ is known to be finite~for~all~the connected affine algebraic groups $G$ (Corollary 3.5(i) in \cite{San81}; where also, in Theorem 5.1, an exact sequence similar to \eqref{eq4} is given, but over all places $v\in\Omega$). We in particular make use of the part of this result for tori proven independently of characteristic in \cite{CTS77} (lemma~below), which itself constitutes most of the proof in characteristic $0$. Therefore, the main content of this section is in positive characteristic, where finiteness of the discussed quotient does not always hold. In particular, the group $\Sh_\omega^2(\widehat{G})$ is not always finite even for connected $G$, and neither is $\Sh_S^2(\widehat{G})$ for a finite~$S$ (Remark \ref{remoes}), although $\Sh^2(\widehat{G})$ always is, by Theorem 1.2.10 in \cite{RosTD}.

\begin{lem}\label{lemsha2torfin}
Let $T$ be a torus over a global field $k$. The quotient $T(k_\Omega)/\overline{T(k)} = \prod\nolimits_v T(k_v)/\overline{T(k)}$ is then finite, hence the sequence \eqref{eq4} is exact for $G = T$. Consequently, $\smash{\Sh_f^2(\widehat{T})}$ is also finite.\;\;
\end{lem}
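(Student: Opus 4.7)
\begin{prfsk}
The crux of the statement is finiteness of $T(k_\Omega)/\overline{T(k)}$, where the closure is taken in $T(k_\Omega) = \prod_v T(k_v)$; this is the classical result of Colliot-Th\'el\`ene and Sansuc \cite{CTS77}, valid in any characteristic. The plan is to first recall their proof, then transfer finiteness from $\Omega$ to $\Omega_f$, then extend \eqref{eq3} to full exactness of \eqref{eq4}, and finally read off finiteness of $\Sh_f^2(\widehat T)$.

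For the first step, I would take a flasque resolution $1 \to F \to Q \to T \to 1$ where $Q$ is quasi-trivial and $F$ is flasque; such resolutions exist in any characteristic. Being a product of Weil restrictions of $\mathbf G_\mathrm{m}$, the torus $Q$ inherits weak approximation from the classical weak approximation for $\mathbf G_\mathrm{m}$ via Shapiro's lemma. The long exact cohomology sequence associated to the resolution then identifies $T(k_\Omega)/\overline{T(k)}$ with a finite kernel inside $\mathrm{H}^1(k,F)$, controlled using the flasque property of $F$; this is the heart of \cite{CTS77}.

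To transfer to $\Omega_f$, I would observe that the continuous projection $\pi : T(k_\Omega) \twoheadrightarrow T(k_{\Omega_f})$ sends the $\Omega$-closure of $T(k)$ into its $\Omega_f$-closure, inducing a surjection of quotients; hence $T(k_{\Omega_f})/\overline{T(k)}$ is also finite, in particular compact. With this compactness in hand, I extend Proposition \ref{prop3} to full exactness of \eqref{eq4} as follows: for each finite $S \subseteq \Omega_f$, Proposition \ref{prop2} gives an exact four-term sequence with $\mathrm{im}(f'_S) = \ker(g_S) \cong T(\mathbf A_S)/\overline{T(k)}^{_S}$ finite; passing to the inverse limit over finite $S$ (where Mittag--Leffler is trivially satisfied by these finite groups, and where $\varprojlim_S \overline{T(k)}^{_S} = \overline{T(k)}$ by Remark \ref{reminvlim}) yields exactness of \eqref{eq4} at $\Sh_f^2(\widehat T)^*$. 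Surjectivity of $g_f$ onto $\Sh^2(\widehat T)^*$ is automatic, as it is the algebraic dual of the inclusion $\Sh^2(\widehat T) \hookrightarrow \Sh_f^2(\widehat T)$ and $-^*$ is exact.

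Finally, exactness of \eqref{eq4} exhibits $\Sh_f^2(\widehat T)^*$ as an extension of $\Sh^2(\widehat T)^*$ by the finite group $T(k_{\Omega_f})/\overline{T(k)}$; since $\Sh^2(\widehat T)$ is finite by Theorem 1.2.10 of \cite{RosTD}, so is $\Sh_f^2(\widehat T)^*$. Because $\mathbf Q/\mathbf Z$ is an injective cogenerator, the natural map $\Sh_f^2(\widehat T) \hookrightarrow \Sh_f^2(\widehat T)^{**}$ is injective, so finiteness of the double dual forces $\Sh_f^2(\widehat T)$ itself to be finite. I expect the main obstacle to be the careful verification that the argument of \cite{CTS77} really goes through in positive characteristic (it does, since flasque resolutions and weak approximation for $\mathbf G_\mathrm{m}$ are characteristic-free); the remaining inverse-limit bookkeeping is routine once finiteness of each intermediate quotient is secured.
\end{prfsk}
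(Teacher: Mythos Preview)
Your proposal is correct and follows essentially the same route as the paper: both invoke the flasque-resolution argument of \cite{CTS77} for finiteness of $T(k_\Omega)/\overline{T(k)}$, then deduce exactness of \eqref{eq4} and finiteness of $\Sh_f^2(\widehat T)$. One imprecision worth flagging: the defect $T(k_\Omega)/\overline{T(k)}$ is not identified with ``a finite kernel inside $\mathrm{H}^1(k,F)$'' but rather with the \emph{cokernel} of $\mathrm{H}^1(k,F)\to\prod_v\mathrm{H}^1(k_v,F)$ (Proposition~18 of \cite{CTS77}); the flasque property is used to show $\mathrm{H}^1(k_v,F)=0$ for almost all $v$, so that this product---and hence its cokernel---is finite. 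Your treatment of the remaining steps (transfer to $\Omega_f$, Mittag--Leffler on the finite intermediate quotients, and the double-dual trick for $\Sh_f^2(\widehat T)$) is actually more explicit than the paper's own proof, which leaves those deductions to the discussion preceding the lemma.
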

\begin{proof}
Because $T$ is smooth, we may work with \'etale (and Galois) cohomology. Proposition 18 in \cite{CTS77} shows that the quotient $\prod\nolimits_v T(k_v)/\overline{T(k)}$ is isomorphic to the cokernel of the following map (where $S$ is the flasque torus appearing in a flasque resolution of $T$; see \cite{CTS77})
\vspace{-10pt}

$$\mathrm{H}^1(k,S)\longrightarrow\prod\nolimits_{v\in\Omega}\mathrm{H}^1(k_v,S)$$
\vspace{-11pt}

\noindent
and it claims that both groups are finite. This claim follows from the finiteness of $\prod_{v}\mathrm{H}^1(k_v,S)$. Indeed, $\mathrm{H}^1(k_v,S)$ is finite for all $v$ and any torus $S$ (by \cite{CTS77}, Remark 8, or \cite{Ser97}, III, \textsection 4.3), so it remains only to show why it is also $0$ for almost all $v$ when $S$ is flasque:

If $K/k$ is a finite Galois extension splitting $S$, then the extensions $K_w/k_v$ are cyclic~for~almost all $v\in\Omega$ (the finite unramified places). For such $v$, the torus $S_{k_v}$ is invertible (a direct summand of a quasi-trivial torus) by applying Lemma 2(vii) and Proposition 2 in \cite{CTS77} to the module of characters $M = \mathrm{X}_{L_w}(S_{L_w}) = \mathrm{X}_{L}(S_{L})$. This is enough to conclude $\mathrm{H}^1(k_v,S) = 0$ by Shapiro's lemma and Hilbert's theorem 90.
\end{proof}

\begin{rem}\label{remomf}
Suppose $\mathrm{char}\,k = 0$. A similar argument using Chebotarev's theorem shows that $\Sh_\omega^2(\widehat{T}) = \Sh_f^2(\widehat{T})$ holds for a torus $T$, and then also for all connected $G$. It is already~known, however, that $\smash{\Sh_\omega^2(\widehat{G})}$ is finite (for example by \cite{San81}, Theorem 5.1). All this fails for finite~$G$.
\end{rem}

Next, recall the multiplicative-unipotent decomposition for a commutative affine group:

\begin{lem}\label{lemmatorunip}
Let $G$ be a commutative affine algebraic group over an arbitrary field $K$. Then~$G$ admits a unique algebraic subgroup $H$ of multiplicative type such that there is an exact sequence $0\longrightarrow H\longrightarrow G\longrightarrow U\longrightarrow 0$ with $U$ unipotent. Moreover, if $G$ is connected (resp. smooth), then so are $H$ and $U$. They are the multiplicative and unipotent ``parts'' of $G$, respectively.
\end{lem}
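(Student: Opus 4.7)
The plan is to establish uniqueness via a simple $\mathrm{Hom}$-vanishing, exhibit $H$ as the maximal multiplicative-type subgroup of $G$, and deduce the preservation of connectedness and smoothness from Cartier duality together with classical $\mathrm{Ext}^1$-vanishing results.

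\emph{Uniqueness.} Given two such decompositions $0 \to H_i \to G \to U_i \to 0$ for $i = 1, 2$, the composite $\varphi : H_1 \hookrightarrow G \twoheadrightarrow U_2$ has image that is simultaneously a quotient of a multiplicative-type group, so itself multiplicative type, and a closed subgroup of a unipotent group, so itself unipotent. The only commutative affine algebraic group which is both is the trivial one, because a non-trivial multiplicative-type group admits a non-trivial character whereas the character group of any unipotent group is zero. Hence $\varphi = 0$, so $H_1 \subseteq \ker(G \to U_2) = H_2$, and symmetry gives $H_1 = H_2$.

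\emph{Existence.} Take $H \subseteq G$ to be the maximal closed algebraic subgroup of multiplicative type. Its existence follows from two facts: the class of multiplicative-type subgroups is closed under sums $H_1 + H_2 \coloneqq \mathrm{im}(H_1 \times H_2 \to G)$, itself multiplicative type as a quotient of the multiplicative-type $H_1 \times H_2$ (stability under subobjects, quotients, and extensions comes from Cartier duality, which turns these operations into the corresponding ones on finitely generated Galois modules); and $G$ is Noetherian. To check that $U \coloneqq G/H$ is unipotent, I would invoke the classical structural result that a commutative affine algebraic group is unipotent if and only if it contains no non-trivial multiplicative-type subgroup. If $U$ contained a non-trivial multiplicative-type subgroup $N$, the preimage $\pi^{-1}(N) \subseteq G$ of $N$ under $\pi : G \to U$ would fit into an exact sequence $0 \to H \to \pi^{-1}(N) \to N \to 0$ with $H$ and $N$ both of multiplicative type; the stability under extensions then forces $\pi^{-1}(N)$ to be of multiplicative type itself, contradicting the maximality of $H$. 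The main obstacle in this step is the structural characterization of unipotent groups just cited, which is classical but subtle in positive characteristic.

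\emph{Preservation of connectedness and smoothness.} The quotient $U = G/H$ inherits both properties directly from $G$. For $H$, I would translate via Cartier duality: $H$ is connected (resp.\ smooth) if and only if $\widehat{H}$ has no torsion of order prime to $\mathrm{char}(K)$ (resp.\ no $\mathrm{char}(K)$-power torsion). If either condition failed, one would obtain a non-trivial quotient $H \twoheadrightarrow C$ with $C$ a finite multiplicative-type group of the relevant type; the pushout sequence $0 \to C \to G/\ker(H \to C) \to U \to 0$ would split, using $\mathrm{Ext}^1(U, \mathbf{G}_{\mathrm m}) = 0$ for the smooth connected unipotent $U$ (classical) together with the Kummer sequence to extract $\mathrm{Ext}^1(U, C) = 0$. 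Hence $G/\ker(H \to C) \simeq C \times U$, which is neither connected nor smooth according to the case --- contradicting that it is a quotient of the connected (resp.\ smooth) $G$. This forces $\widehat{H}$ to have the right torsion structure, so $H$ is connected (resp.\ smooth) too.
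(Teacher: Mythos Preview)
The paper does not prove this lemma at all; it simply cites Demazure--Gabriel, IV, \S3, 1.1(a) and 1.4. Your existence and uniqueness arguments are sound in outline (the characterization of unipotence as ``no multiplicative-type subgroup'' that you flag as subtle is indeed the crux, and is precisely what DG establishes), so in effect you are reconstructing part of DG's proof rather than following the paper.

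Your preservation-of-smoothness argument, however, has a genuine error. You assert that $\mathrm{Ext}^1(U,\mathbf{G}_{\mathrm m}) = 0$ for smooth connected unipotent $U$, calling it ``classical''. Over an imperfect field this is false: for a nontrivial purely inseparable extension $K'/K$, the Weil restriction $R_{K'/K}(\mathbf{G}_{\mathrm m})$ sits in a \emph{non-split} exact sequence $0 \to \mathbf{G}_{\mathrm m} \to R_{K'/K}(\mathbf{G}_{\mathrm m}) \to U \to 0$ with $U$ smooth, connected and unipotent. Such non-split extensions are exactly why DG IV, \S3, 1.1(b) requires a \emph{perfect} base field for the multiplicative-unipotent sequence to split. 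There is also a circularity: your splitting argument invokes $U$ being smooth \emph{and} connected, but when proving ``$G$ connected $\Rightarrow H$ connected'' you only know $U$ is connected, and when proving ``$G$ smooth $\Rightarrow H$ smooth'' you only know $U$ is smooth.

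The clean fix bypasses $\mathrm{Ext}$ entirely. Both connectedness and smoothness are geometric properties of finite-type group schemes (they may be checked after base change to $\overline K$), and over the perfect field $\overline K$ the sequence does split (DG IV, \S3, 1.1(b)), so $G_{\overline K} \cong H_{\overline K} \times U_{\overline K}$. A direct factor of a connected (resp.\ smooth) group scheme is connected (resp.\ smooth), and the conclusion for $H$ follows. For the connected case alone your strategy can be rescued by a more elementary observation: the relevant $C$ has order $n$ prime to $p$, and since multiplication by $n$ is an isomorphism on any unipotent group in characteristic $p$, the pushout extension is forced to split directly---no appeal to $\mathrm{Ext}^1(U,\mathbf{G}_{\mathrm m})$ needed.
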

\begin{proof}
These are statements 1.1(a) and 1.4 in \cite{DG70}, IV, \textsection 3.
\end{proof}

There is a natural topology (defined in \cite{RosTD}, 3.3) on $\mathrm{H}^1(K, G)$ for a local field $K$, when $G$ is a group scheme locally of finite type over $K$, which is functorial and $\delta$-functorial (meaning that the long exact sequences coming from such groups have all maps continuous, where $H^2$ is always discrete and $H^0$ comes with its own natural topology). If $G$ is an \textit{almost-torus} (that is, an extension of a finite group by a torus), then $\widehat G$ is representable by a group scheme locally of finite type over $K$ (by \cite{RosTD}, Proposition 2.3.5) and $\mathrm{H}^1(K, \widehat G)$ is hence also equipped with this topology. In particular, short exact sequences~of almost-tori give topological long-exact sequences of their duals.
This topology can be extended to duals of other affine commutative groups. However, the $\delta$-functoriality is then lost (the connecting homomorphisms do~not~have to be continuous). We show that it is possible to recover it in a case of special interest:

\begin{lem}\label{lemmaconnhomcont}
Let $0\longrightarrow H\longrightarrow G\longrightarrow U\longrightarrow 0$ be the multiplicative-unipotent decomposition of a commutative affine algebraic group over a local field $K$. Then the connecting homomorphism $\mathrm{H}^1(K,\widehat{H})\rightarrow \mathrm{H}^2(K,\widehat{U})$ is continuous (equivalently, it has open kernel).
\end{lem}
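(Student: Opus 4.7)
The plan is to transport the continuity question from the Cartier-dual side to the primal side through local Tate duality, where $\delta$-functoriality of Rosengarten's natural topology on cohomology is available because the groups $H$, $G$, $U$ are commutative affine algebraic groups of finite type (\cite{RosTD}, Section 3.3, plus Appendix G for Archimedean places).

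Concretely, the long exact sequence of fppf cohomology associated to $0\to H\to G\to U\to 0$ contains a boundary map $\delta:U(K)\to \mathrm{H}^1(K,H)$, which is therefore continuous. Local Tate duality (\cite{RosTD}, Theorem 1.2.3, together with Appendix G at Archimedean places) provides continuous perfect pairings $\mathrm{H}^i(K,G')\times\mathrm{H}^{2-i}(K,\widehat{G'})\to\mathbf{Q}/\mathbf{Z}$ for each commutative affine algebraic group $G'$ over $K$; Pontryagin duality thus yields topological isomorphisms
$$\mathrm{H}^1(K,\widehat H)\cong \mathrm{H}^1(K,H)^D,\qquad \mathrm{H}^2(K,\widehat U)\cong\mathrm{H}^0(K,U)^D$$
(reading $\mathrm{H}^0$ as $\widehat{\mathrm H}^0$ when $K$ is Archimedean, which is irrelevant in the positive-characteristic case of main interest). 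Granting that these identifications are compatible with the boundary maps of the primal and Cartier-dual long exact sequences of $0\to H\to G\to U\to 0$ and $0\to\widehat U\to\widehat G\to\widehat H\to 0$, the connecting homomorphism $\partial:\mathrm{H}^1(K,\widehat H)\to \mathrm{H}^2(K,\widehat U)$ identifies (up to sign) with the Pontryagin dual $\delta^D$. Since Pontryagin duality preserves continuity, $\partial$ is then continuous; equivalently, $\ker\partial$ is open, as $\mathrm{H}^2(K,\widehat U)$ is discrete.

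The hard part is precisely this compatibility, i.e.\ verifying that Rosengarten's cup-product pairing is topological on each side and intertwines the two boundary maps $\delta$ and $\partial$. It is a standard feature of Poitou-Tate-type dualities in principle, but some care is required here because $\widehat U$ is not representable by a locally finite-type group scheme, so the topology on $\mathrm{H}^2(K,\widehat U)$ is defined indirectly within Rosengarten's framework (precisely the reason $\delta$-functoriality was said to be lost in general). Once this compatibility is properly unwound from the construction of local duality in \cite{RosTD}, the continuity of $\partial$ follows immediately, and no further structural use of the multiplicative-unipotent decomposition is needed.
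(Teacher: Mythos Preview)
Your approach via local duality is genuinely different from the paper's. The paper works entirely on the Cartier-dual side: it applies an iterated Frobenius $\mathrm{Fr}^n_G$ to pass to a sequence $0\to H^{(p^n)}\to G^{(p^n)}\to U^{(p^n)}\to 0$ that splits over $K$, and then shows that the map $\mathrm{H}^1(K,\widehat{H^{(p^n)}})\to\mathrm{H}^1(K,\widehat H)$ is open by analysing the infinitesimal kernel of $\mathrm{Fr}^n_H$ and invoking \cite{RosTD}, Proposition~3.3.1.

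There is, however, a real gap in your argument at the step ``since Pontryagin duality preserves continuity, $\partial$ is then continuous''. The Pontryagin dual $\delta^D:\mathrm{H}^1(K,H)^D\to U(K)^D$ is continuous, but for the \emph{compact-open} topology on $U(K)^D$. Local duality only identifies $\mathrm{H}^2(K,\widehat U)$ with $(U(K)\pro)^D$; in positive characteristic this agrees with $U(K)^D$ as an abstract group (because $U(K)$ has finite exponent), but it carries the \emph{discrete} topology, which is strictly finer than compact-open whenever $U(K)$ is non-compact --- i.e.\ whenever $U$ is not wound, which is precisely the situation in the paper's application (Step~2 of Theorem~\ref{thmsha2fin}). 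So continuity of $\delta^D$ for the Pontryagin topology on the target does not give openness of $\ker\partial$.

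Granting the cup-product compatibility $\langle\partial x,u\rangle=\pm\langle x,\delta u\rangle$, what you actually need is that $\ker\partial=(\mathrm{im}\,\delta)^\perp$ be open in $\mathrm{H}^1(K,H)^D$, equivalently that $\overline{\mathrm{im}\,\delta}\subset\mathrm{H}^1(K,H)$ be compact. This is a separate input that does not follow from continuity of $\delta$ alone; it requires structural information about $\mathrm{H}^1(K,H)$ for $H$ of multiplicative type (for instance, that it is itself compact). So the multiplicative-unipotent hypothesis does substantive work here, contrary to your closing remark, and this missing compactness step must be supplied before the duality argument can conclude.
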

\begin{proof}
The Frobenius functor $G\rightsquigarrow G^{(p)}$ is exact on algebraic groups over $K$, since it amounts~to a pullback by a map $K\rightarrow K$. Thus there is, for all $n$, a commutative diagram with exact rows,\hspace{-5pt}
\vspace{-6pt}

\begin{center}\begin{tikzcd}
    0\arrow[r] & H\arrow[r]\arrow[d, "\mathrm{Fr}_H^n"] & G\arrow[r]\arrow[d, "\mathrm{Fr}_G^n"] & U\arrow[r]\arrow[d, "\mathrm{Fr}_U^n"] & 0\\
    0\arrow[r] & H^{(p^n)}\arrow[r] & G^{(p^n)}\arrow[r] & U^{(p^n)}\arrow[r] & 0
\end{tikzcd}\end{center}

\noindent
where $\mathrm{Fr}_G : G\rightarrow G^{(p)}$ is the Frobenius morphism. The top row splits over the perfect closure of $k$ by \cite{DG70}, IV, \textsection 3, 1.1(b), hence the bottom row splits over $k$ for large enough $n$. Fix such an $n$. This ensures that the bottom map in the following commutative square is a surjection.
\vspace{-7pt}

\begin{center}\begin{tikzcd}
    \mathrm{H}^1(K,\widehat{G})\arrow[r, "\alpha"] & \mathrm{H}^1(K,\widehat{H})\arrow[r] & \mathrm{H}^2(K,\widehat{U})\\
    \mathrm{H}^1(K,\widehat{G^{(p^n)}})\arrow[r, two heads]\arrow[u] & \mathrm{H}^1(K,\widehat{H^{(p^n)}})\arrow[u, "\beta"]
\end{tikzcd}\end{center}
\vspace{-7pt}

\noindent
We want to prove that the map $\alpha$ has open image. It suffices to show that the image contains an open set, hence it suffices to show that the map $\beta$ is open.

Denote by $I_H$ the infinitesimal multiplicative kernel of $\mathrm{Fr}_H^n$. Looking at the exact sequences
$$0\longrightarrow I_H\longrightarrow H\longrightarrow \mathrm{im(\mathrm{Fr}_H^n)}\longrightarrow 0$$
$$0\longrightarrow \mathrm{im(\mathrm{Fr}_H^n)}\longrightarrow H^{(p^n)}\longrightarrow \mathrm{coker(\mathrm{Fr}_H^n)}\longrightarrow 0$$
we observe first that all the groups involved are of multiplicative type, so in particular almost-tori. Therefore their duals are locally of finite type over $K$. Next, the map $\beta$ is the composition
$$\mathrm{H}^1(K,\widehat{H^{(p^n)}})\longrightarrow\mathrm{H}^1(K,\widehat{\mathrm{im}(\mathrm{Fr}_H^n)})\longrightarrow\mathrm{H}^1(K,\widehat{H})$$
in which the first map is open by \cite{RosTD}, Proposition 3.3.1(vi). The second is also open, by \cite{RosTD}, Proposition 3.3.1(viii), because $\widehat{I_H}$ is a smooth finite group: Indeed, the infinitesimal multiplicative group $I_H$ is a twisted form of a tower of copies of $\mu_p$ (\cite{DG70}, IV, \textsection 3, 5.7), hence its dual is a twisted form of a tower of copies of the smooth group $\mathbf Z/p = \widehat{\mu_p}$.
\end{proof}

\begin{lem}\label{lemmainfgrpcoh}
Let $F$ be a commutative infinitesimal group over a global (function) field $k$. Then:
\begin{enumerate}[$\hspace{0.25 cm}$ a)]
    \item $\mathrm{H}^2(k,\widehat{F}) = 0$
    \item The natural image of $\mathrm{H}^1(k,\widehat{F})$ is dense in $\mathrm{H}^1(k_S,\widehat{F})$, for any finite set $S$ of places of $k$.
\end{enumerate}
\end{lem}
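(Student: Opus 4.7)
The plan is to prove both statements by dévissage along the multiplicative-unipotent decomposition of $F$ (Lemma \ref{lemmatorunip}): since $F$ is infinitesimal, this takes the form $0 \to M \to F \to A \to 0$ with $M$ infinitesimal multiplicative (iterated extension of twists of $\mu_p$) and $A$ infinitesimal unipotent (iterated extension of $\alpha_p$, which has no nontrivial twists as $\mathrm{Aut}(\alpha_p) = \mathbf G_{\mathrm{m}}$ and $H^1(k, \mathbf G_{\mathrm{m}}) = 0$). Dualizing gives $0 \to \widehat A \to \widehat F \to \widehat M \to 0$ with $\widehat A$ infinitesimal unipotent and $\widehat M$ étale $p$-primary; crucially, $\widehat F$ has no multiplicative-type subgroup because $F$ has no étale quotient.

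The key reduction is to produce an exact sequence $0 \to \widehat F \to V \xrightarrow{\phi} V' \to 0$ in which both $V$ and $V'$ are vector groups over $k$ (that is, $k$-group schemes isomorphic to $\mathbf G_{\mathrm{a}}^n$). This is possible because every commutative finite $k$-group scheme with no multiplicative subgroup can be realized as the kernel of an additive polynomial endomorphism of a vector group: for the $\alpha_p$ factors one uses the Frobenius sequence $0 \to \alpha_p \to \mathbf G_{\mathrm{a}} \xrightarrow{F} \mathbf G_{\mathrm{a}} \to 0$; for a twist $E$ of $\mathbb Z/p$ classified by some $c \in k^\times/(k^\times)^{p-1}$ one uses the twisted Artin--Schreier sequence $0 \to E \to \mathbf G_{\mathrm{a}} \xrightarrow{x \mapsto x^p - cx} \mathbf G_{\mathrm{a}} \to 0$; the general case then follows by iterated pushout along the composition series of $\widehat F$.

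Granting such $V$ and $V'$, both parts follow from the long exact sequence together with Grothendieck's vanishing $H^i(k', \mathbf G_{\mathrm{a}}) = 0$ for all $i \geq 1$ and any field $k'$ (since $\mathbf G_{\mathrm{a}}$ is quasi-coherent over the affine base $\mathrm{Spec}\,k'$). For (a), the sandwich $H^1(k, V') \to H^2(k, \widehat F) \to H^2(k, V)$ has both outer terms zero, forcing $H^2(k, \widehat F) = 0$. For (b), the LES identifies $H^1(k', \widehat F)$ with the cokernel of $V(k') \to V'(k')$ (using $H^1(k', V) = 0$), with the Rosengarten topology matching the quotient topology. Density of $H^1(k, \widehat F)$ in $\prod_{v \in S} H^1(k_v, \widehat F)$ then reduces to density of $V'(k)$ in $\prod_{v \in S} V'(k_v) = V'(k_S)$, which is strong approximation for the affine space $V' \cong \mathbf G_{\mathrm{a}}^n$ (the standard density of $k$ in $k_S$ for any finite $S \subsetneq \Omega$).

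The main obstacle will be the construction of the global vector-group embedding $0 \to \widehat F \to V \to V' \to 0$ with cokernel again a vector group, not merely a smooth unipotent group -- wound unipotent groups can fail weak approximation, which would break the argument. The two base cases are classical, but propagating them along the full filtration of $\widehat F$ requires an iterated pushout that preserves the vector-group property of both $V$ and $V'$ simultaneously. A secondary concern is topological: since $H^1(k_v, \widehat F)$ carries the generally non-discrete Rosengarten topology, the approximation must take place in the appropriate product topology, which agrees with the quotient topology on $V'(k_v)/\phi(V(k_v))$ only when the LES connecting maps are continuous -- a fact ultimately stemming from the $\delta$-functoriality discussed around Lemma \ref{lemmaconnhomcont}.
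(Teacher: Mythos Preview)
Your proposal has a genuine gap: the embedding $0 \to \widehat{F} \to V \to V' \to 0$ with $V,V'$ vector groups does not exist in general. Take $F = \mu_{p^2}$, which is infinitesimal; then $\widehat{F} = \mathbb{Z}/p^2$. But any vector group $\mathbf{G}_{\mathrm a}^n$ is annihilated by multiplication by $p$, hence so is every subgroup scheme of it, and $\mathbb{Z}/p^2$ is not. No amount of iterated pushout can circumvent this: whatever you build, the outcome would embed $\mathbb{Z}/p^2$ into a $p$-torsion group. (Equivalently, the $\overline{k}$-points of any finite subgroup scheme of $\mathbf{G}_{\mathrm a}^n$ form an $\mathbb{F}_p$-subspace of $\overline{k}^n$, so its \'etale part is elementary abelian.) Your base cases handle only the simple composition factors $\alpha_p$ and $k$-forms of $\mathbb{Z}/p$; they do not propagate to extensions that fail to be $p$-torsion.

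The idea is repairable if you replace ``vector group'' everywhere by ``split unipotent group'' (e.g.\ products of truncated Witt groups $W_n$): these are iterated extensions of $\mathbf{G}_{\mathrm a}$, so $\mathrm{H}^i(k',-) = 0$ for $i\geq 1$ persists, and weak approximation follows by the same d\'evissage. That every finite commutative $k$-group with trivial multiplicative part embeds with split unipotent cokernel into a split unipotent group is true but needs an argument (essentially Witt-covector theory as in \cite{DG70}, IV, \S3), and the topological compatibility you flag still requires care. The paper sidesteps all of this by using duality directly: for (a), $F$ infinitesimal forces its maximal smooth subgroup to be trivial, hence $\Sh^1_S(F)=0$; combined with $F(k_v)=0$ and local duality this gives $\mathrm{H}^2(k_v,\widehat{F})=0$ for every $v$, and global duality gives $\Sh^2(\widehat{F})=\Sh^1(F)^*=0$, so $\mathrm{H}^2(k,\widehat{F})=0$. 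For (b), the Poitou--Tate sequence for $F$ yields an injection $\mathrm{H}^1(k_S,F)\hookrightarrow \mathrm{H}^1(k,\widehat{F})^*$, and Pontryagin-dualizing gives the required density.
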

\begin{proof}
Statement (a) can be shown over any field $k$ by \cite{DG70}, IV, \textsection 3, 5.7 and 5.8 and some Galois cohomology, however we give a different proof for $k$ global:
When $S$ is finite, we get via Lemma 6.1.1 in \cite{Con12} that $\Sh^1_S(F) = \Sh^1_S(F')$ for a unique smooth algebraic subgroup $F'$ of $F$ such that $F(K) = F'(K)$ for all separable field extensions $K/k$. However, then $F' = 0$ and $\Sh^1_S(F) = 0$. In particular, $\Sh^2(\widehat{F}) = \Sh^1(F)^* = 0$ by global duality.
Because~$F$~is~infinitesimal, we also have $F(k_v) = 0$ for all $v$. By local duality, $\prod_v \mathrm{H}^2(k_v,\widehat{F}) = 0$, which proves (a).

To show (b), take part $\mathrm{H}^1(k,F)\rightarrow
\mathrm{H}^1(\mathbf A,F)\rightarrow\mathrm{H}^1(k,\widehat{F})^*$ of the Poitou-Tate exact sequence (\cite{RosTD}, Theorem 1.2.9), where $\mathrm{H}^1(k,\widehat{F})^*$ is compact. Restricting to the finite subset $S$ of places of $k$, we conclude exactness of:
$$\Sh^1_S(F)\longrightarrow\mathrm{H}^1(k_S,F)\longrightarrow\mathrm{H}^1(k,\widehat{F})^*$$
Using again that $\Sh^1_S(F) = 0$, the second map is a continuous injection. Taking Pontryagin duals now recovers the map $\mathrm{H}^1(k,\widehat{F})\rightarrow\mathrm{H}^1(k_S,F)^D = \mathrm{H}^1(k_S,\widehat{F})$ (via the local duality stated in \cite{RosTD}, Theorem 1.2.5). It has dense image; otherwise $\mathrm{H}^1(k_S,\widehat{F})/\overline{\mathrm{im}\,\mathrm{H}^1(k,\widehat{F})}$ would have~a nontrivial map to $\mathbf Q/\mathbf Z$, which extends to $\varphi\in\mathrm{H}^1(k_S,\widehat{F})^D$, but maps to $0$ in $\mathrm{H}^1(k,\widehat{F})^*$.
\end{proof}

Before stating the main finiteness result, recall that a unipotent algebraic group (which we~\textit{do not assume} smooth or connected) is called \textit{wound} if it has no subgroup isomorphic to $\mathbf G_{\mathrm a}$.

\begin{thm}\label{thmsha2fin}
Let $G$ be a commutative affine algebraic group over a global field $k$, such that it has no non-finite wound unipotent quotient groups. (This holds in particular if $\mathrm{char}(k) = 0$, or if the unipotent part of $G$ is finite or split, or a product of such groups.)

The group $\Sh_S^2(\widehat{G})$ 
is finite for any finite~set of finite places $S\subseteq\Omega_f$ of $k$. If $G$ is moreover connected, then the group $\Sh_f^2(\widehat{G})$ is also finite (and $\Sh_f^2(\widehat{G}) = \Sh_\omega^2(\widehat{G})$ by Remark \ref{remomf}).
\end{thm}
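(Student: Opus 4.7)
My plan is to reduce finiteness of $\Sh^2_S(\widehat G)$ to finiteness of the compact group $G(\mathbf A_S)/\overline{G(k)}^{_S}$ via the four-term exact sequence established in Proposition \ref{prop2}:
$$0 \to \overline{G(k)}^{_S} \to G(\mathbf A_S) \to \Sh^2_S(\widehat G)^* \to \Sh^2(\widehat G)^*.$$
Since $\Sh^2(\widehat G)$ is finite by Theorem 1.2.10 of \cite{RosTD}, and the compactness of $G(\mathbf A_S)/\overline{G(k)}^{_S}$ is Claim 1 in the proof of Proposition \ref{prop2}, the problem reduces to showing this quotient is also discrete, equivalently that $\overline{G(k)}^{_S}$ is open in $G(\mathbf A_S)$.

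This I would establish by dévissage over the structure of $G$. Lemma \ref{lemmatorunip} gives the multiplicative--unipotent decomposition $0 \to H \to G \to U \to 0$; the hypothesis passes to the quotient $U$, so its wound part is finite. Since a connected smooth unipotent group whose wound quotient is finite must be split (the wound quotient being both connected and finite, hence trivial), the smooth connected piece of $U$ is a power of $\mathbf G_{\mathrm a}$, and $U$ is built by extensions from $\mathbf G_{\mathrm a}^n$, a finite étale group, and an infinitesimal piece; similarly $H$ decomposes as an extension of a torus by finite (possibly infinitesimal) groups of multiplicative type. I would then check the required finiteness on each building block: for a torus $T$, by Lemma \ref{lemsha2torfin}; for $\mathbf G_{\mathrm a}^n$, by density of $k^n$ in $k_S^n$ under weak approximation (as $S$ is finite), making the corresponding adele quotient trivial; for an infinitesimal $F$, by the vanishing $\mathrm H^2(k, \widehat F) = 0$ from Lemma \ref{lemmainfgrpcoh}(a); and for a finite $F$, by local Tate duality giving $\mathrm H^2(k_v, \widehat F) \cong F(k_v)^D$ finite, together with the known finiteness of $\Sh^2(\widehat F)$.

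The most delicate step will be re-assembling the block results into the final statement for $G$. The natural tool is to apply $\mathcal{Hom}(-,\mathbf G_{\mathrm m})$ to each short exact sequence in the dévissage and chase cohomology; the resulting long exact sequence features $\mathcal{Ext}^1$ terms which in positive characteristic are nontrivial (notably $\mathcal{Ext}^1(\mathbf G_{\mathrm a}, \mathbf G_{\mathrm m})$ and those arising from the infinitesimal multiplicative part). This, together with the continuity of connecting homomorphisms (which Lemma \ref{lemmaconnhomcont} precisely furnishes at the critical multiplicative--unipotent step), is where I expect the main technical obstacle to lie. An attractive alternative is to work directly with the functor $G \mapsto G(\mathbf A_S)/\overline{G(k)}^{_S}$ and establish its behavior under short exact sequences using the Poitou--Tate machinery assembled in diagram \eqref{eq1}, deducing that finiteness (equivalently discreteness) propagates through the dévissage.

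For the second statement, on $\Sh^2_f(\widehat G)$ with $G$ connected, the dévissage simplifies considerably: the wound quotient of $U$ is now both connected and finite, hence trivial, so $U \cong \mathbf G_{\mathrm a}^n$ and $H$ is (up to infinitesimal adjustments) a torus. Lemma \ref{lemsha2torfin} directly supplies the finiteness of $\Sh^2_f(\widehat T)$, which the dévissage then transfers to $G$. In characteristic zero Remark \ref{remomf} identifies $\Sh^2_f$ with $\Sh^2_\omega$, yielding the parenthetical claim of the statement; in positive characteristic the equality is tautological since $\Omega_f = \Omega$.
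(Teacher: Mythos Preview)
Your plan correctly identifies the building blocks and the relevant lemmas, but the ``reassembly'' you flag as the main obstacle is indeed where your proposal diverges from the paper and remains a genuine gap. You propose to propagate finiteness of $G(\mathbf A_S)/\overline{G(k)}^{_S}$ (equivalently of $\Sh^2_S(\widehat G)$) through short exact sequences, either via long sequences in $\mathcal{Ext}$ or via the functor $G\mapsto G(\mathbf A_S)/\overline{G(k)}^{_S}$; neither route is made precise, and for $\Sh^2_f(\widehat G)$ this approach gives no control at all, since finiteness of each $\Sh^2_S(\widehat G)$ does not bound their union. The paper's key idea is different and sharper: after quotienting out the maximal torus $T$, one proves that the \emph{single-place} localisation $\mathrm{H}^2(k,\widehat W)\to\mathrm{H}^2(k_v,\widehat W)$ is injective for \emph{every} finite $v$ (this is the content of Steps~2 and~3). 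Combining this with Chebotarev --- which furnishes infinitely many $v$ with $\mathrm{H}^1(k_v,\widehat T)=0$ --- one obtains directly an injection $\Sh^2_f(\widehat G)\hookrightarrow\Sh^2_f(\widehat T)$, and Lemma~\ref{lemsha2torfin} finishes. This single-place injectivity is the missing idea in your plan, and it is what makes the reassembly clean: no $\mathcal{Ext}^1$-chasing is needed at the torus step, and the $\Sh^2_f$ statement comes for free rather than requiring a separate uniformity argument.

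Two smaller points: first, ``connected smooth split unipotent'' does not imply isomorphic to $\mathbf G_{\mathrm a}^n$ (Witt vectors $W_n$ for $n\geq 2$ are a counterexample), so Step~3 must be an induction on dimension extracting one copy of $\mathbf G_{\mathrm a}$ at a time, exactly as the paper does. Second, the paper's induction at that step relies on the explicit identification $\mathrm{H}^2(K,\widehat{\mathbf G_{\mathrm a}})\cong\Omega^1_K$ and the separability of $k_v/k$ to get injectivity of $\Omega^1_k\hookrightarrow\Omega^1_{k_v}$; this is the concrete input replacing your appeal to weak approximation for $\mathbf G_{\mathrm a}$.
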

\begin{rem}\label{remoes}
For $S\subseteq\Omega_f$ finite, $\Sh_S^2(\widehat{G})$ is finite if and only if $G(k_S)/\overline{G(k)}^{_S}$ is (by Proposition \ref{prop2} and finiteness of $\Sh^2(\widehat{G})^*$).
This finiteness is in general false for wound unipotent (commutative) groups. Indeed, if $G$ is wound and of dimension at most $p-2$, then $G(k)$ is finite by \cite{Oes84}, VI.3.1. However, $G(k_v)$ can then very well still be an infinite (Hausdorff) group.

For example, take $k = \mathbf F_p[t]$ for $p > 2$ prime, and the subgroup $U$ of $\mathbf G_{\mathrm a}^2$ given by $y^p-y = tx^p$. Then $U$ is wound of dimension $1\leq p-2$ and the set $G(k_v)$ is infinite by Hensel's lemma (which gives a point $y$ for every $x$ such that $v(tx^p) > 0$) for all places $v$. 
We return to this in Section~6.
\end{rem}

\begin{proof}
Let $S\subseteq\Omega_f$ be a finite set of finite places of $k$. First, consider the connected-\'etale exact sequence $0\longrightarrow C\longrightarrow G\longrightarrow E\longrightarrow 0$ with $C$ connected and $E$ finite. The rows of the diagram
\vspace{-15pt}

\begin{center}\begin{tikzcd}
    & \overline{G(k)}^{_S}\arrow[r]\arrow[d] & \gamma\big(\overline{G(k)}^{_S}\big)\arrow[r]\arrow[d] & 0\\
    C(k_S)\arrow[r] & G(k_S)\arrow[r, "\gamma"] & E(k_S)
\end{tikzcd}\end{center}
\vspace{-5pt}

\noindent
are exact, hence so is the sequence $C(k_S)/\overline{C(k)}^{_S}\longrightarrow G(k_S)/\overline{G(k)}^{_S}\longrightarrow E(k_S)/\gamma\big(\overline{G(k)}^{_S}\big)\longrightarrow 0$. The group $E(k_S)$ is finite, which by the remark above reduces the proof to the case of $G = C$ connected. In particular, since $\Sh_S^2(\widehat{G})\subseteq\Sh_f^2(\widehat{G})$, it suffices to prove that $\Sh_f^2(\widehat{G})$ is finite. 

Let $G$ be connected, and let $H$ and $V$ be the connected multiplicative and unipotent parts of $G$, respectively. Then $H$ is an extension, by a torus $T$, of a connected finite group $F$, thus necessarily infinitesimal and of multiplicative type. Setting $W\coloneqq G/T$ gives exact sequences:
\vspace{-10pt}

$$0\longrightarrow T\longrightarrow G\longrightarrow W\longrightarrow 0$$
$$0\longrightarrow F\longrightarrow W\longrightarrow V\longrightarrow 0$$
\vspace{-14pt}

\noindent
We proceed with the proof in three steps:
\smallskip

\textit{Step 1:} We want to reduce the general proof to the claim that $\gamma_v : \mathrm{H}^2(k,\widehat{W})\rightarrow \mathrm{H}^2(k_v,\widehat{W})$ is injective for all places $v\in\Omega_f$ of $k$. To see why this is sufficient to finish the proof, suppose that the claim is true, write the following diagram with exact rows
\vspace{-5pt}

\begin{center}\begin{tikzcd}
    & \mathrm{H}^2(k,\widehat{W})\arrow[r]\arrow[d, "\gamma_v"] & \mathrm{H}^2(k,\widehat{G})\arrow[r]\arrow[d] & \mathrm{H}^2(k,\widehat{T})\\
    \mathrm{H}^1(k_v,\widehat{T})\arrow[r] & \mathrm{H}^2(k_v,\widehat{W})\arrow[r] & \mathrm{H}^2(k_v,\widehat{G}) & 
\end{tikzcd}\end{center}
\vspace{-5pt}

\noindent
and note first that, by Chebotarev's theorem, we have $\mathrm{H}^1(k_v,\widehat{T}) = 0$ for infinitely many~$v\in\Omega$. (Indeed, $T$ is split by some finite Galois extension $K/k$ and, for every $v$ which is completely split in $K$ and a prime $w|v$ of $K$, we get $K_w = k_v$ and $\mathrm{H}^1(k_v,\widehat{T}) = \mathrm{H}^1(K_w,\widehat{T_K}) = \mathrm{H}^1(K_w,T_K)^D = 0$.) 
Now, any element $B\in\mathrm{ker}(\Sh_f^2(\widehat{G})\rightarrow\Sh_f^2(\widehat{T}))$ is an image of some $A\in \mathrm{H}^2(k,\widehat{W})$. Because $B_v$ is $0$ in $\mathrm{H}^2(k_v,\widehat{G})$ for almost all $v$, we may fix such a place $v\in\Omega_f$ for which also $\mathrm{H}^1(k_v,\widehat{T}) = 0$. This shows that $\gamma_v(A) = 0$ and hence $A = 0$ by injectivity of $\gamma_v$. Finally, we conclude $B = 0$.~It follows that the map $\Sh_f^2(\widehat{G})\rightarrow\Sh_f^2(\widehat{T})$ is an injection, but $\Sh_f^2(\widehat{T})$ is finite by Lemma \ref{lemsha2torfin}.

\textit{Step 2:} To prove that $\gamma_v : \mathrm{H}^2(k,\widehat{W})\rightarrow \mathrm{H}^2(k_v,\widehat{W})$ is injective for all places $v\in\Omega_f$ of $k$, we first suppose that the same is true for $\beta_v : \mathrm{H}^2(k,\widehat{V})\rightarrow \mathrm{H}^2(k_v,\widehat{V})$. Let $v$ be any finite place of $k$ and write the following diagram with exact rows, immediately applying Lemma \ref{lemmainfgrpcoh}(a) to $F$,
\vspace{-5pt}

\begin{center}\begin{tikzcd}
    \mathrm{H}^1(k,\widehat{F})\arrow[r]\arrow[d] & \mathrm{H}^2(k,\widehat{V})\arrow[r]\arrow[d, "\beta_v"] & \mathrm{H}^2(k,\widehat{W})\arrow[r]\arrow[d, "\gamma_v"] & 0\arrow[d]\\
    \mathrm{H}^1(k_v,\widehat{F})\arrow[r, "p"] & \mathrm{H}^2(k_v,\widehat{V})\arrow[r] & \mathrm{H}^2(k_v,\widehat{W})\arrow[r] &
    \mathrm{H}^2(k_v,\widehat{F})
\end{tikzcd}\end{center}\noindent
and then note that $F$ is of multiplicative type and $V$ is unipotent. We use Lemma \ref{lemmaconnhomcont} for $W$ to get that the map $p : \mathrm{H}^1(k_v,\widehat{F})\rightarrow \mathrm{H}^2(k_v,\widehat{V})$ is continuous. Since $\mathrm{H}^2(k_v,\widehat{V})$ is discrete, the image $p(Z)$ of any subset $Z\subseteq \mathrm{H}^1(k_v,\widehat{F})$ agrees with the image $p(\overline{Z})$ of its closure. It follows from this, because $\beta_v$ is injective, that we may replace $\mathrm{H}^1(k,\widehat{F})$ in the diagram by the closure of its image in $\mathrm{H}^1(k_v,\widehat{F})$, without changing the exactness. The first column is now surjective by Lemma \ref{lemmainfgrpcoh}(b), so injectivity of $\gamma_v$ follows from the 4-lemma.

\textit{Step 3:} 
It remains only to prove that $\beta_v : \mathrm{H}^2(k,\widehat{V})\rightarrow \mathrm{H}^2(k_v,\widehat{V})$ is injective for a connected unipotent group $V$ with no non-finite wound quotients. If $V$ is finite, then it's infinitesimal and injectivity holds since $\mathrm{H}^2(k,\widehat{V}) = 0$ by Lemma \ref{lemmainfgrpcoh}(a). Otherwise, we proceed by induction on the dimension of $V$. The group $V$ is not wound, so by definition, there is a monomorphism $\mathbf G_{\mathrm a}\hookrightarrow V$. The group $U\coloneqq V/\mathbf G_{\mathrm a}$ is connected and has no non-finite wound quotients, so the map $\alpha_v : \mathrm{H}^2(k,\widehat{U})\rightarrow \mathrm{H}^2(k_v,\widehat{U})$ is, by the induction hypothesis, injective in the diagram
\begin{center}\begin{tikzcd}
    \mathrm{H}^1(k,\widehat{\mathbf G_{\mathrm a}})\arrow[r]\arrow[d] & \mathrm{H}^2(k,\widehat{U})\arrow[r]\arrow[d, "\alpha_v"] & \mathrm{H}^2(k,\widehat{V})\arrow[r]\arrow[d, "\beta_v"] & \mathrm{H}^2(k,\widehat{\mathbf G_{\mathrm a}})\arrow[d]\\
    \mathrm{H}^1(k_v,\widehat{\mathbf G_{\mathrm a}})\arrow[r, "p"] & \mathrm{H}^2(k_v,\widehat{U})\arrow[r] & \mathrm{H}^2(k_v,\widehat{V})\arrow[r] &
    \mathrm{H}^2(k_v,\widehat{\mathbf G_{\mathrm a}})
\end{tikzcd}\end{center}\noindent
with exact rows. The two groups on the left are equal to $0$ by Proposition 2.5.3(iii) in \cite{RosTD}. Finally, a canonical isomorphism $\mathrm{H}^2(K,\widehat{\mathbf G_\mathrm{a}})\cong\Omega^1_K$ is shown in \cite{RosTD}, Corollary 2.7.3, for any $K$ such that $[K : K^p] = p$. Now, the right-most vertical map is an injection by separability of $k_v/k$ and by \cite{Mat90}, Theorem 26.6(3).
\end{proof}

\begin{cor}
Let $G$ be as in the preceding theorem. If $G$ is connected and admits no torus as a subgroup, then weak approximation for $\Omega$ and strong approximation for any proper subset $S\neq\Omega$ of not-necessarily-finite places hold for (any torsor of) $G$.
\end{cor}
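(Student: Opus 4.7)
The plan is to reduce the corollary, under the stated hypotheses, to the vanishing of the Tate-Shafarevich kernels $\Sh^2(\widehat G)$, $\Sh^2_S(\widehat G)$ and $\Sh^2_f(\widehat G)$; combined with Theorem \ref{thmmainlocal} and a classical argument handling archimedean places in characteristic $0$, this will yield the required approximation.

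For the vanishing, I will invoke the multiplicative-unipotent decomposition $0\to H\to G\to U\to 0$ of Lemma \ref{lemmatorunip}, together with the further extension $0\to T\to H\to F\to 0$ used in the proof of Theorem \ref{thmsha2fin}, where $T$ is a torus and $F$ is a connected infinitesimal group of multiplicative type. The key observation is that $T$ is a torus subgroup of $G$, so the no-torus-subgroup hypothesis forces $T=0$. Step 1 of the proof of Theorem \ref{thmsha2fin} then produces an injection $\Sh_f^2(\widehat G)\hookrightarrow\Sh_f^2(\widehat T)=0$; the same argument, with $\Sh^2$ or $\Sh_S^2$ in place of $\Sh_f^2$, gives the vanishing of the other two kernels (the single auxiliary place $v\in\Omega_f$ with $\mathrm{H}^1(k_v,\widehat T)=0$ used there is trivially available since $\widehat T=0$). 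Plugging these vanishings into the factorizations of the Brauer-Manin obstruction maps $G(\mathbf A_S)\to\Be_S(G)^*$ and $G(k_{\Omega_f})\to\Be_f(G)^*$ through $\Sh_S^2(\widehat G)^*$ and $\Sh_f^2(\widehat G)^*$ constructed inside the proof of Theorem \ref{thmmainlocal}, the obstructions become zero, and the exact sequences of Theorem \ref{thmmainlocal} yield $\overline{G(k)}^{\,S}=G(\mathbf A_S)$ and $\overline{G(k)}=G(k_{\Omega_f})$.

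To pass to an arbitrary torsor $X$ of $G$, one uses that $\Sh^2(\widehat G)=0$ forces $\Sh^1(G)=0$ by the non-degeneracy of Poitou-Tate (\cite{RosTD}, Theorem 1.2.10); every locally-trivial $G$-torsor is then trivial, so any $X$ with adelic points is isomorphic to $G$ and approximation for $X$ reduces to the case just handled. This finishes the proof in positive characteristic, where $\Omega=\Omega_f$. In characteristic $0$, weak approximation at $\Omega$ and strong approximation for $S$ containing archimedean places are not covered by Theorem \ref{thmmainlocal}; however, by Cartier's theorem every algebraic group in characteristic $0$ is smooth, so a connected multiplicative group is necessarily a torus, and the no-torus-subgroup hypothesis now forces $H=0$ entirely. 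Thus $G$ is a connected smooth commutative unipotent group, hence isomorphic as an algebraic group to $\mathbf G_{\mathrm a}^n$ for some $n$, for which weak and strong approximation at all (proper subsets of) places are classical consequences of the density of $k$ in restricted products of its completions. The main technical step is really the vanishing of the three $\Sh^2_?(\widehat G)$; once one sees that the no-torus hypothesis yields $T=0$, this is just a re-reading of Step 1 of Theorem \ref{thmsha2fin}, and the rest of the argument is assembly.
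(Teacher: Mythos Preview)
Your proof is correct and follows essentially the same route as the paper's. The only presentational difference is that the paper cites Steps 2 and 3 of Theorem \ref{thmsha2fin} directly (when $T=0$ one has $W=G$, so the map $\gamma_v$ there is precisely $\mathrm{H}^2(k,\widehat G)\to\mathrm{H}^2(k_v,\widehat G)$) and then applies Proposition \ref{prop2} with $S=\Omega\setminus\{v\}$, whereas you route the same conclusion through Step 1 and Theorem \ref{thmmainlocal}; the substance is identical, including the separate reduction to $\mathbf G_{\mathrm a}^n$ in characteristic $0$.
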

\begin{proof}
If $\mathrm{char}\,k = 0$, then every connected multiplicative group is a torus. Thus $G$ is unipotent by Lemma \ref{lemmatorunip}, hence split unipotent, and the statement reduces to the classical strong approximation in $\mathbf G_{\mathrm{a}}(\mathbf A) = \mathbf A$.
If $\mathrm{char}\,k > 0$, then Steps 2 and 3 in the proof of the preceding theorem show that $\mathrm{H}^2(k,\widehat{G})\rightarrow \mathrm{H}^2(k_v,\widehat{G})$ is injective for every $v$.  Then $\Sh_S^2(\widehat{G}) = 0$ for $S = \Omega{\setminus}\{v\}$ and strong approximation holds with respect to $S$ by Proposition \ref{prop2}.
\end{proof}

\begin{cor}
Let $G$ be a commutative affine algebraic group over a global field $k$. Then the sequence \eqref{eq4} is an exact sequence of Abelian groups.
\end{cor}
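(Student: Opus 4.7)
The plan is to obtain \eqref{eq4} by passing to the inverse limit of the short exact sequences furnished by Proposition~\ref{prop2}. Surjectivity of $g_f$ is automatic, since it is the algebraic dual of the inclusion $\Sh^2(\widehat{G})\hookrightarrow \Sh_f^2(\widehat{G})$, and exactness at $\overline{G(k)}$ and at $G(k_{\Omega_f})$ is already Proposition~\ref{prop3} combined with Remark~\ref{reminvlim}. The only substantive task is therefore to verify $\mathrm{im}(f'_f)=\ker(g_f)$ inside $\Sh_f^2(\widehat{G})^*$.

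First I would record that for every finite $S\subseteq \Omega_f$, Proposition~\ref{prop2} gives a short exact sequence
$$0\longrightarrow G(k_S)/\overline{G(k)}^{_S}\longrightarrow \Sh_S^2(\widehat{G})^*\longrightarrow \Sh^2(\widehat{G})^*\longrightarrow 0$$
of profinite abelian groups (the Pontryagin dual of a discrete torsion group is profinite, and so is a closed subgroup thereof). Since inverse limits of compact Hausdorff abelian groups preserve short-exactness, passing to $\varprojlim$ over finite $S\subseteq\Omega_f$ ordered by inclusion produces
$$0\longrightarrow \varprojlim\nolimits_S G(k_S)/\overline{G(k)}^{_S}\longrightarrow \Sh_f^2(\widehat{G})^*\longrightarrow \Sh^2(\widehat{G})^*\longrightarrow 0,$$
which identifies $\ker(g_f)$ with $\varprojlim_S G(k_S)/\overline{G(k)}^{_S}$. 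The problem then reduces to showing that the natural comparison map $G(k_{\Omega_f})/\overline{G(k)}\to \varprojlim_S G(k_S)/\overline{G(k)}^{_S}$, induced by the projections $G(k_{\Omega_f})\to G(k_S)$, is an isomorphism of abstract abelian groups.

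Injectivity of this comparison is exactly Remark~\ref{reminvlim}. The harder direction is surjectivity: given a compatible family $(\bar g_S)_S$, the plan is to exhaust $\Omega_f$ by an increasing chain $S_1\subset S_2\subset\cdots$ of finite subsets and inductively choose representatives $g_n\in G(k_{S_n})$ of $\bar g_{S_n}$ such that $g_n|_{k_{S_{n-1}}}=g_{n-1}$. The obstruction at step $n$ is a prescribed element of $\overline{G(k)}^{_{S_{n-1}}}$ which one needs to lift through the projection $\overline{G(k)}^{_{S_n}}\to \overline{G(k)}^{_{S_{n-1}}}$, i.e., one needs $\varprojlim^1_S\overline{G(k)}^{_S}=0$. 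Verifying this Mittag-Leffler-type vanishing is the main obstacle. The natural route is a dévissage via the connected-étale and multiplicative-unipotent decompositions: using Theorem~\ref{thmsha2fin} one first settles the case where $G$ has no non-finite wound unipotent quotient (in which case $\Sh_f^2(\widehat{G})$ is finite, the inverse system of $\Sh_S^2(\widehat{G})^*$ stabilizes cofinally at some $S_0$, and $\varprojlim^1$ is trivially zero), then treats the residual wound unipotent contribution using the compactness of each $G(k_S)/\overline{G(k)}^{_S}$ extracted from the proof of Proposition~\ref{prop2}. Once surjectivity is in hand, the compactness of $G(k_{\Omega_f})/\overline{G(k)}$ promised at the start of the section comes for free, as that group is now identified with the profinite $\ker(g_f)$.
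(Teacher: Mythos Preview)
Your overall framework matches the paper's: both recognise that \eqref{eq4} is the inverse limit of the exact sequences of Proposition~\ref{prop2}, and both reduce the problem to showing that the transition maps $\overline{G(k)}^{_T}\to\overline{G(k)}^{_S}$ are surjective for finite $S\subseteq T\subseteq\Omega_f$ (your $\varprojlim^1$ vanishing). The reduction you give to that point is fine.

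The gap is in your proposed proof of the Mittag-Leffler surjectivity. Your d\'evissage has two problems. First, in case (i) you invoke finiteness of $\Sh_f^2(\widehat{G})$ for $G$ with no non-finite wound unipotent quotient, but Theorem~\ref{thmsha2fin} gives this only when $G$ is moreover connected; the group $G=\mathbf Z/p$ already has $\Sh_f^2(\widehat{G})$ infinite. Second, and more seriously, you do not say how to glue (i) and (ii): given $0\to H\to G\to U\to 0$, there is no evident short exact sequence relating $\overline{G(k)}^{_S}$ to $\overline{H(k)}^{_S}$ and $\overline{U(k)}^{_S}$ (the image of $G(k)$ in $U(k_S)$ need not be $U(k)$, and the kernel of $\overline{G(k)}^{_S}\to U(k_S)$ need not be $\overline{H(k)}^{_S}$), so separate $\varprojlim^1$-vanishing for $H$ and $U$ does not formally yield it for $G$.

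The paper avoids this by proving the surjectivity $\overline{G(k)}^{_T}\twoheadrightarrow\overline{G(k)}^{_S}$ directly for $T=S\cup\{v\}$: a snake-lemma argument on the diagram with rows $G(k_v)\hookrightarrow G(k_T)\twoheadrightarrow G(k_S)$ and their images in the $\Sh^2_\bullet(\widehat{G})^*$ reduces it to showing that the image of $G(k_v)$ in $\Sh_T^2(\widehat{G})^*$, namely $G(k_v)/(G(k_v)\cap\overline{G(k)}^{_T})$, is compact. The d\'evissage is then performed on this \emph{local} compactness, where it does go through: after replacing $G$ by a smooth subgroup with the same $k$- and $k_v$-points, take $0\to H\to G\to U\to 0$ with $U$ wound unipotent and $H$ having no such quotient; then $H(k_v)/(H(k_v)\cap\overline{H(k)}^{_T})$ embeds in the finite group $\Sh_T^2(\widehat{H})^*$ (only the per-$T$ finiteness of Theorem~\ref{thmsha2fin} is needed, not $\Sh_f^2$), while $U(k_v)$ is compact by Oesterl\'e, and a sandwich argument finishes.
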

\begin{proof}
By Remark \ref{reminvlim}, sequence \eqref{eq4} is the inverse limit of the following exact sequences
$$0\longrightarrow \overline{G(k)}^{_S}\longrightarrow G(k_S)\xrightarrow{\;\;f'_S\;\;} \Sh_S^2(\widehat{G})^*\xrightarrow{\;\;g_S\;\;} \Sh^2(\widehat{G})^*\longrightarrow 0$$
taken over finite $S\subseteq\Omega_f$.
Consider the sequences $0\rightarrow \overline{G(k)}^{_S}\rightarrow G(k_S)\rightarrow G(k_S)/\overline{G(k)}^{_S}\rightarrow 0$~for finite $S\subseteq\Omega_f$. It suffices to show that their inverse limit remains exact, since this then implies:
$$G(k_{\Omega_f})/\overline{G(k)} = \varprojlim\nolimits_{\textrm{finite }S\subseteq\Omega_f}\big(G(k_S)/\overline{G(k)}^{_S}\big) = \mathrm{ker}(\Sh_f^2(\widehat{G})^*\!\rightarrow\!  \Sh^2(\widehat{G})^*)$$
To prove the right-exactness of an inverse limit, we check the Mittag-Leffler property: In fact, we will prove the stronger property that $\overline{G(k)}^{_T}\rightarrow \overline{G(k)}^{_S}$ is a surjection for finite sets $S\subseteq T\subseteq\Omega_f$.
It suffices to prove this for $T = S\cup\{v\}$, so consider the following diagram with exact rows:
\begin{center}\begin{tikzcd}
    0\arrow[r] & G(k_v)\arrow[r]\arrow[d] & G(k_T)\arrow[r]\arrow[d] & G(k_S)\arrow[r]\arrow[d] & 0\\
    0\arrow[r] & \mathrm{im}\left(\mathrm{H}^2(k_v,\widehat{G})^*\rightarrow\Sh^2_T(k,\widehat{G})^*\right)\arrow[r] & \Sh^2_T(k,\widehat{G})^*\arrow[r] & \Sh^2_S(k,\widehat{G})^*\arrow[r] &
    0
\end{tikzcd}\end{center}\noindent
Now $\overline{G(k)}^{_T}\rightarrow \overline{G(k)}^{_S}$ appears as the map between kernels of the right two vertical morphisms. By local duality, $G(k_v)$ is dense in $G(k_v)\pro = \mathrm{H}^2(k_v,\widehat{G})^*$. The snake lemma thus implies that, to end the proof, it's sufficient to prove that the image of $G(k_v)$ in $\Sh^2_T(k,\widehat{G})^*$ is compact. This image is the quotient of $G(k_v)$ by the intersection $G(k_v)\cap\overline{G(k)}^{_T}$ in $G(k_T)$.

For this, we first note that, by Lemma 3.1.1 in \cite{Con12}, we may replace $G$ by a smooth subgroup $G'$ such that $G'(K) = G(K)$ for $K\in\{k,k_v\}$, where the equality preserves topology by definition of topology on $K$-points. Now, take a short exact sequence of smooth algebraic groups $0\rightarrow H\rightarrow G\rightarrow U\rightarrow 0$ such that $U$ is a wound unipotent group and $H$ has no wound unipotent quotient (which is possible by \cite{Oes84}, V.5, by taking $H$ to be an extension of a split unipotent group by the smooth multiplicative part of $G$). The commutative diagram, in which we embed $G(k_v)$ into $G(k_T)$,
\vspace{-10pt}

\begin{center}\begin{tikzcd}
    0\arrow[r] & H(k_v)\cap\overline{H(k)}^{_T} \arrow[r]\arrow[d] & G(k_v)\cap\overline{G(k)}^{_T}\arrow[d]\\
    0\arrow[r] & H(k_v)\arrow[r] & G(k_v)\arrow[r] & U(k_v)
\end{tikzcd}\end{center}
\vspace{-5pt}

\noindent
gives, after dividing by $H(k_v)\cap\overline{H(k)}^{_T}$, a short exact sequence of groups
$$0\longrightarrow \frac{H(k_v)}{H(k_v)\cap\overline{H(k)}^{_T}}\longrightarrow \frac{G(k_v)}{H(k_v)\cap\overline{H(k)}^{_T}}\longrightarrow U(k_v)$$
\vspace{-5pt}

\noindent
with last map open by smoothness of $H$ (and by \cite{CesTC}, Proposition 4.3, or alternatively by the discussion preceding Theorem 4.5 in \cite{ConAd}). The group on the left is finite, since it's a subgroup of $\Sh^2_T(k,\widehat{H})^*$, which is finite by Theorem \ref{thmsha2fin}. On the other hand, $U(k_v)$ is compact by Theorem VI.2.1 in \cite{Oes84}. We deduce that the origin of the surjective continuous map
\vspace{-7pt}

\begin{center}\begin{tikzcd}
    \dfrac{G(k_v)}{H(k_v)\cap\overline{H(k)}^{_T}}\arrow[r, two heads] & \dfrac{G(k_v)}{G(k_v)\cap\overline{G(k)}^{_T}}
\end{tikzcd}\end{center}
\vspace{-7pt}

\noindent
is compact, hence its target must be as well.
\end{proof}

\begin{cor}
Let $G$ be a commutative affine algebraic group over $k$. The topological group $G(k_{\Omega_f})/\overline{G(k)}$ is compact. Equivalently, its inclusion into $\Sh_f^2(\widehat{G})^*$ is a topological embedding.
\end{cor}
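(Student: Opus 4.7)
The plan is to upgrade the algebraic bijection $G(k_{\Omega_f})/\overline{G(k)} \to \ker(g_f)$ induced by $f'_f$ (which exists by the preceding corollary) to a homeomorphism of topological groups. Since $\ker(g_f)$ is closed in the profinite, hence compact Hausdorff, group $\Sh_f^2(\widehat{G})^*$, this will force compactness of the quotient. Conversely, once compactness is known, a continuous injection into a Hausdorff space is automatically a topological embedding, giving the equivalence claimed in the statement.

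To prove that the induced bijection is a homeomorphism, it suffices to show that $f'_f: G(k_{\Omega_f}) \to \Sh_f^2(\widehat{G})^*$ is open onto its image, and, since we are dealing with topological group homomorphisms, this can be checked at the identity. A basic open neighborhood of $0$ in $G(k_{\Omega_f}) = \prod_{v \in \Omega_f} G(k_v)$ has the form $U = U_T \times \prod_{v \notin T} G(k_v)$ with $T \subseteq \Omega_f$ finite and $U_T \ni 0$ open in $G(k_T)$. Proposition \ref{prop2} identifies the image of $f'_T: G(k_T) \to \Sh_T^2(\widehat{G})^*$ with $\ker(g_T)$ and shows that $G(k_T)/\overline{G(k)}^T$ is compact; the induced continuous bijection $G(k_T)/\overline{G(k)}^T \to \ker(g_T)$ is therefore a homeomorphism, so $f'_T$ is itself open onto $\ker(g_T)$. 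Writing $p_T : \Sh_f^2(\widehat{G})^* \to \Sh_T^2(\widehat{G})^*$ for the canonical projection, the set $V := p_T^{-1}(f'_T(U_T)) \cap \ker(g_f)$ is thus an open neighborhood of $0$ in $\ker(g_f)$, and the problem reduces to proving the inclusion $f'_f(U) \supseteq V$.

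This inclusion is the only real obstacle. I expect to establish it by a Mittag-Leffler-type lift: given $\alpha \in V$, I would inductively build a compatible family $(x_S)_S$ of elements $x_S \in G(k_S)$ indexed by finite $S \supseteq T$, with $x_T \in U_T$ satisfying $f'_T(x_T) = p_T(\alpha)$ (a free base choice), $x_{S'}|_S = x_S$ whenever $S \subseteq S'$, and $f'_S(x_S) = p_S(\alpha)$ throughout. The inductive step from $S$ to $S' \supseteq S$ takes any preliminary lift $\tilde x_{S'} \in G(k_{S'})$ of $p_{S'}(\alpha)$ (which exists by Proposition \ref{prop2} applied to $S'$), notes that $\tilde x_{S'}|_S - x_S$ lies in $\ker(f'_S) = \overline{G(k)}^S$, and corrects $\tilde x_{S'}$ by an element of $\overline{G(k)}^{S'}$ restricting to this difference — such a correction exists by the surjectivity $\overline{G(k)}^{S'} \twoheadrightarrow \overline{G(k)}^S$ established in the preceding corollary. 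Running the induction along an enumeration of the countable set $\Omega_f$ and using $G(k_{\Omega_f}) = \varprojlim_S G(k_S)$ assembles the family into an element $x \in U$ with $f'_f(x) = \alpha$, completing the argument.
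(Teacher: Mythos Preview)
Your argument is correct and follows essentially the same route as the paper: both reduce openness of the bijection to the surjectivity $\overline{G(k)}^{S'}\twoheadrightarrow\overline{G(k)}^{S}$ established in the preceding corollary. The paper packages this surjectivity as $\overline{G(k)}\twoheadrightarrow\overline{G(k)}^{T}$ to get a product decomposition $\widetilde U=(U_T+\overline{G(k)}^{T})\times G(k_{\Omega_f\setminus T})$, whereas you run the Mittag--Leffler lift explicitly along a cofinal chain; these are two phrasings of the same step.
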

\begin{proof}
The Mittag-Leffler property shown in the preceding proof gives a group isomorphism $G(k_{\Omega_f})/\overline{G(k)}\xrightarrow{\,\sim\,} \varprojlim_S\big(G(k_S)/\overline{G(k)}^{_S}\big)$, where the limit (taken over finite $S\subseteq\Omega_f$) on the right is compact Hausdorff. The isomorphism is continuous and, to show that it's a homeomorphism, we just need to prove that it is also open.

A basis of the topology on $G(k_{\Omega_f})$ is given by open sets $U = U_T\times G(k_{\Omega_f\setminus T})$, for some finite set $T$ and open $U_T\subseteq G(k_T)$. A subset of $G(k_{\Omega_f})/\overline{G(k)}$ is open if and only if its preimage in $G(k_{\Omega_f})$ is. Thus it suffices to prove that the image of the set $\widetilde{U}\coloneqq (U_T\times G(k_{\Omega_f\setminus T}))+\overline{G(k)}\subseteq G(k_{\Omega_f})$ is open in $\varprojlim_{S}\big(G(k_S)/\overline{G(k)}^{_S}\big)$. 

In the proof of the preceding corollary, it was shown that $\overline{G(k)}^{_S}\!\!\rightarrow\overline{G(k)}^{_T}$ is a surjection~for~any finite $S\supseteq T$ with $S\subseteq\Omega_f$. Remark \ref{reminvlim} now implies that the map $\overline{G(k)} = \varprojlim_{S}\overline{G(k)}^{_S}\!\longrightarrow\overline{G(k)}^{_T}$ is also a surjection. This proves that $\widetilde{U} = (U_T+\overline{G(k)}^{_T})\times G(k_{\Omega_f\setminus T})$. Since both the set $\widetilde{U}$~and~its complement are $\overline{G(k)}$-invariant, they have disjoint projections to $G(k_T)/\overline{G(k)}^{_T}$.

The image of $\widetilde{U}$ in the limit $\varprojlim_{S}\big(G(k_S)/\overline{G(k)}^{_S}\big)$ is thus exactly the preimage of its open~image $\big(U_T+\overline{G(k)}^{_T}\big)/\overline{G(k)}^{_T}$ in the quotient $G(k_T)/\overline{G(k)}^{_T}$. Hence, it is open.
\end{proof}

\begin{cor}
Let $G$ be as in Theorem \ref{thmsha2fin}. If $G$ is connected, then $G(k_{\Omega})/\overline{G(k)}$~is~finite.
\end{cor}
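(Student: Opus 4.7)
The plan is to split on $\mathrm{char}(k)$. In positive characteristic, $\Omega_\infty = \varnothing$, so $\Omega = \Omega_f$ and the quotient $G(k_\Omega)/\overline{G(k)}$ is literally the one handled in the two preceding corollaries. I would apply the previous corollary to embed it topologically into $\Sh_f^2(\widehat{G})^*$ via the map $f'_f$ of Proposition~\ref{prop3}, and then Theorem~\ref{thmsha2fin}, whose hypotheses are exactly those we have assumed on $G$, to conclude that $\Sh_f^2(\widehat{G})$ is finite. Since Pontryagin duality preserves finiteness, this forces $G(k_\Omega)/\overline{G(k)}$, being a subspace of the finite set $\Sh_f^2(\widehat{G})^*$, to be finite as well.

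In characteristic zero, $k$ is a number field, and the assumption on wound unipotent quotients is automatic: every commutative unipotent group over a field of characteristic zero is a vector group and hence contains $\mathbf G_\mathrm{a}$ whenever nonzero, so no nonzero quotient of $G$ can be wound. At that point I would simply invoke Sansuc's Corollary~3.5(i) in \cite{San81}, which asserts precisely that $\prod_{v\in\Omega} G(k_v)/\overline{G(k)}$ is finite for any connected affine algebraic group over a number field. (This is in fact part of the motivation recalled at the start of Section~5.)

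I anticipate no real obstacle: the corollary is essentially bookkeeping, with the preceding corollary plus Theorem~\ref{thmsha2fin} delivering the finite-places contribution, and Sansuc's theorem absorbing the Archimedean contribution that the positive-characteristic methods of this paper deliberately leave aside. If one wished to avoid citing \cite{San81} entirely, the alternative route would be to observe that $G(k_{\Omega_\infty})$ is a finite-dimensional real Lie group with only finitely many connected components, and then to invoke real approximation for connected commutative algebraic groups to push its identity component into $\overline{G(k)}$; combined with the positive-characteristic argument this would again give the conclusion, but at the cost of extra work for no new information.
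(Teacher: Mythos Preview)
Your proposal is correct and follows essentially the same approach as the paper: split on characteristic, use $\Omega=\Omega_f$ together with the embedding into $\Sh_f^2(\widehat{G})^*$ and Theorem~\ref{thmsha2fin} in positive characteristic, and cite Sansuc's Corollary~3.5(i) in characteristic zero. The paper's own proof is a two-line version of exactly this.
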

\begin{proof}
Clear when $\mathrm{char}\,k > 0$, by $k_{\Omega} = k_{\Omega_f}$. Otherwise, it's Corollary 3.5(i) in \cite{San81}.
\end{proof}
\section{Explicit counterexamples}

The statement of Theorem \ref{thmsha2fin} from the previous section, contains two major assumptions:
\begin{itemize}
    \item finiteness of $\Sh_S^2(\widehat{G})$ is conditional on the property that: for any quotient group of $G$, if it is wound unipotent, then it must be finite
    \item finiteness of $\Sh_f^2(\widehat{G})$ is conditional on both the above property and, additionally, on the connectedness of $G$
\end{itemize}
The first assumption cannot be dropped, as shown in Remark \ref{remoes} by a general example that follows immediately from the work of Oesterl\'e and the exact sequence from \eqref{eq2}. The second assumption is classically known to be necessary, as is shown by any constant group $\mathbf Z/n, n\geq 2$. Indeed, if $G = \mathbf Z/n$, then $G(k)$ is finite and $\prod_{v\in\Omega_f} G(k_v)$ infinite, so that the exact sequence \eqref{eq3} immediately implies the result.
Note that the first assumption is relevant only in positive characteristic, while the second assumption is important regardless of characteristic and for more obvious reasons. 

Consider the global field $k = \mathbf F_p(t)$ of characteristic $p\geq 3$. Let $[t]$ denote the place~of~the prime $(t)$ of $k$. In this section we study the cases when $G = \mathbf Z/p$ (Example \ref{exexpl1}) and when~$G$~is~the group $\{tx^p = y^p-y\}$ from Remark \ref{remoes} (Example \ref{exexpl2}), to explicitly provide infinite families of distinct elements in $\Sh_\omega^2(\widehat{G}) = \Sh_f^2(\widehat{G})$ and $\Sh_{[t]}^2(\widehat{G})$, respectively, without using duality theory.

For this, we will use the canonical $K$-module isomorphism $\mathrm{H}^2(K, \widehat{\mathbf G_{\mathrm{a}}})\cong\Omega_K^1$ from \cite{RosTD}, Proposition 2.7.6, over an arbitrary field $K$ such that $[K : K^p] = p$, where $\Omega_K^1$ is the module of differentials over $K$. It is clear that, for the non-finite example above, we need to understand the endomorphism of $\Omega_K^1$ (induced through this identification) by the Frobenius endomorphism $\mathrm{Fr}_{\mathbf G_{\mathrm a}} : \mathbf G_{\mathrm a}\rightarrow\mathbf G_{\mathrm a}^{(p)} = \mathbf G_{\mathrm a}$ which acts on points as $x\mapsto x^p$. This is done in the proposition below. Before proving it, we must explain in more detail this $K$-module isomorphism:

Explicitly, the following composition of maps is shown by Rosengarten to be an isomorphism,
$$\Omega^1_K\rightarrow \Omega^1_{\mathbf G_{\mathrm a}}\twoheadrightarrow \frac{\Omega^1_{\mathbf G_{\mathrm a}}/B^1_{\mathbf G_{\mathrm a}}}{\mathrm{im}(C^{-1}-i)}\xrightarrow{\;\sim\;} \mathrm{H}^1(\mathbf G_{\mathrm a},\mathbf G_{\mathrm m}/\mathbf G_{\mathrm m}^p)\rightarrow \mathrm{Br}(\mathbf G_{\mathrm a})
\hookleftarrow
\mathrm{Ext}_K^2(\mathbf G_{\mathrm a},\mathbf G_{\mathrm m})\xleftarrow{\;\sim\;} \mathrm{H}^2(K,\widehat{\mathbf G_{\mathrm a}})$$
where the first map is $\omega\mapsto X\omega$ (for $\mathbf G_{\mathrm a, K} = \mathrm{Spec}(K[X])$), the second is the canonical surjection, the fourth is a connecting homomorphism coming from the $p$-th power map on $\mathbf G_{\mathrm m}$, the fifth is induced by a Yoneda inclusion and the last is an edge map in the Leray spectral sequence. The third map is induced by the connecting homomorphism coming from the exact sequence (in which we identify a module over a ring with its associated sheaf over the affine scheme)
$$0\longrightarrow \mathbf G_{\mathrm m}/\mathbf G_{\mathrm m}^p\xrightarrow{\;f\,\mapsto\,\mathrm{d}f/f\;} \Omega^1_{\mathbf G_{\mathrm a}}\xrightarrow{\;C^{-1}-i\;} \Omega^1_{\mathbf G_{\mathrm a}}/B^1_{\mathbf G_{\mathrm a}} \rightarrow 0$$
where $B^1_{\mathbf G_{\mathrm a}}$ is the image of $\mathrm{d} : \mathbf G_{\mathrm a}\rightarrow\Omega^1_{\mathbf G_{\mathrm a}}$, the map $i$ is the quotient map $i : \Omega^1_{\mathbf G_{\mathrm a}}\rightarrow \Omega^1_{\mathbf G_{\mathrm a}}/B^1_{\mathbf G_{\mathrm a}}$~and $C^{-1}$ is the ``formal inverse to the Cartier operator'', defined by $C^{-1}(f\,\mathrm{d}g) = [f^p g^{p-1}\,\mathrm{d}g]$.

\begin{prop}\label{propcart}
The map $C^{-1} : \Omega^1_K\rightarrow\Omega^1_K/B^1_K$ is a group isomorphism, and the composition $C\coloneqq (C^{-1})^{-1}\circ i \, :\, \Omega^1_K\rightarrow\Omega^1_K$ is the Cartier operator given by:
\vspace{-10pt}

$$C(f\,\mathrm{d}g) = f_{p-1}\,\mathrm{d}g\, ,\;\textrm{ where }\; f = \sum_{j=0}^{p-1} f_j^p g^j\;\textrm{ for } f_j\in K\textrm{, unique if }\mathrm{d}g\neq 0$$
\vspace{-3pt}

\noindent
There is an exact sequence $0\longrightarrow K^p\longrightarrow K\xrightarrow{\;\;d\;\;} \Omega^1_K\xrightarrow{\;\;C\;\;} \Omega^1_K\longrightarrow 0$. Furthermore, the diagram
\vspace{-12pt}

\begin{center}\begin{tikzcd}[column sep = large]
    \Omega^1_K\arrow[r, "\sim"]\arrow[d, "C"] & \mathrm{H}^2(K,\widehat{\mathbf G_{\mathrm a}})\arrow[d, "\mathrm{Fr}_{\mathbf G_{\mathrm a}}^*"]\\
    \Omega^1_K\arrow[r, "\sim"] & \mathrm{H}^2(K,\widehat{\mathbf G_{\mathrm a}})
\end{tikzcd}\end{center}\noindent
commutes, where the right column is induced by $\mathrm{Fr}_{\mathbf G_{\mathrm a}} : \mathbf G_{\mathrm a}\rightarrow\mathbf G_{\mathrm a}$.
\end{prop}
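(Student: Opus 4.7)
The plan is to first establish the algebraic properties of the Cartier operator on $\Omega^1_K$ by direct calculation in the $K^p$-basis afforded by $[K : K^p] = p$, and then trace the Frobenius through the chain of maps defining the isomorphism $\Omega^1_K \cong \mathrm{H}^2(K, \widehat{\mathbf G_{\mathrm a}})$ recalled just before the statement.

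First I would fix a $p$-basis $g \in K$ (so $dg$ generates $\Omega^1_K$ as a $K$-module and $\{1, g, \ldots, g^{p-1}\}$ is a $K^p$-basis of $K$, with every $f \in K$ uniquely written as $f = \sum_{j=0}^{p-1} f_j^p g^j$). The putative formula $C(h\,dg) = h_{p-1}\,dg$ is additive (using $(a+b)^p = a^p + b^p$), and satisfies $C \circ C^{-1}(h\,dg) = h\,dg$ since $h^p g^{p-1}$ has $(p-1)$-coefficient $h$ in its $K^p$-basis expansion. The key computation in the other direction is
$f\,dg - f_{p-1}^p g^{p-1}\,dg = \sum_{j=0}^{p-2} f_j^p g^j\,dg = \sum_{j=0}^{p-2} \tfrac{1}{j+1}\, d(f_j^p g^{j+1}) \in B^1_K$,
valid since $d(f_j^p) = 0$ in characteristic $p$ and $j+1 \not\equiv 0 \pmod p$ for $j \le p-2$. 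Hence $C^{-1}(f_{p-1}\,dg) = [f_{p-1}^p g^{p-1}\,dg] = [f\,dg]$ in $\Omega^1_K/B^1_K$, which both shows that $C^{-1}$ is a group isomorphism (surjective and with trivial kernel by the above two identities) and identifies $(C^{-1})^{-1}\circ i$ with the explicit Cartier formula.

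Next, for the exact sequence, I would use the same decomposition. The computation $df = \sum_{j=1}^{p-1} j f_j^p g^{j-1}\,dg$ gives $\ker(d) = K^p$ directly. Its coefficient of $dg$, $\sum_{j=1}^{p-1} j f_j^p g^{j-1}$, has no $g^{p-1}$-term in its $K^p$-basis decomposition, so $C \circ d = 0$. Conversely, if $C(h\,dg) = h_{p-1}\,dg = 0$ then $h = \sum_{j=0}^{p-2} h_j^p g^j$, and the same computation as above exhibits $h\,dg$ as exact. Surjectivity of $C$ follows from $C(h^p g^{p-1}\,dg) = h\,dg$.

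The main obstacle is the Frobenius diagram. All maps in the chain defining the isomorphism after the initial one ($\omega \mapsto X\omega$) are functorial in $\mathbf G_{\mathrm a}$, hence commute with $\mathrm{Fr}_{\mathbf G_{\mathrm a}}^*$. Since $\mathrm{Fr}_{\mathbf G_{\mathrm a}}$ is given by $X \mapsto X^p$ and $\omega \in \Omega^1_K$ is a ``constant'' form along $\mathbf G_{\mathrm a}$, we have $\mathrm{Fr}_{\mathbf G_{\mathrm a}}^*(X\omega) = X^p\omega$ in $\Omega^1_{\mathbf G_{\mathrm a}}$, and commutativity of the diagram reduces to the congruence $X^p\omega \equiv X \cdot C(\omega) \pmod{B^1_{\mathbf G_{\mathrm a}} + \mathrm{im}(C^{-1} - i)}$. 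Writing $\omega = f\,dg$ as before, the difference $X^p\omega - Xf_{p-1}\,dg$ splits as the sum over $j \le p-2$ of terms $(Xf_j)^p g^j\,dg = \tfrac{1}{j+1}\, d((Xf_j)^p g^{j+1}) \in B^1_{\mathbf G_{\mathrm a}}$, plus the single term $(Xf_{p-1})^p g^{p-1}\,dg - (Xf_{p-1})\,dg$, which is exactly $(C^{-1} - i)(Xf_{p-1}\,dg)$. Both pieces vanish modulo the ambient equivalence, concluding the proof.
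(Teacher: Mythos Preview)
Your proof is correct and follows essentially the same route as the paper: both arguments exploit the $K^p$-basis $\{1,g,\ldots,g^{p-1}\}$ to verify the Cartier formula via the identity $f\,dg - f_{p-1}^p g^{p-1}\,dg = d\bigl(\sum_{j\le p-2}(j+1)^{-1}f_j^p g^{j+1}\bigr)$, and both reduce the Frobenius square to the first map $\omega\mapsto X\omega$ by functoriality, then check that $X^p\omega - X\cdot C(\omega)$ lies in $B^1_{\mathbf G_{\mathrm a}}+\mathrm{im}(C^{-1}-i)$ by the same decomposition. The only noteworthy difference is that you prove the exact sequence $0\to K^p\to K\to\Omega^1_K\to\Omega^1_K\to 0$ by a direct coefficient computation, whereas the paper instead cites Lemmas 2.6.1 and 2.7.4 of \cite{RosTD} and deduces it from the already-established commutativity of the diagram; your self-contained verification is a modest gain in independence from the reference.
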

\begin{proof}
The first claim is true quite generally (for $\Omega^1_{X/S}$ where $X\rightarrow S$ is smooth), but it can easily be checked by hand in our case, using that $[K:K^p] = p$. In particular, surjectivity of $C^{-1}$ follows so: for any nonzero $[f\,\mathrm{d}g]\in\Omega^1_K/B^1_K$, we immediately have $g\notin K^p$ and $f = \sum_{j=0}^{p-1} f_j^p g^j$ uniquely. Then $C^{-1}(f_{p-1}\,\mathrm{d}g)$ is represented by
\vspace{-10pt}

$$f_{p-1}^pg^{p-1}\,\mathrm{d}g = f\,\mathrm{d}g - \sum_{j=0}^{p-2} f_j^p g^j\,\mathrm{d}g = f\,\mathrm{d}g - \mathrm{d}\!\left(\sum_{j=0}^{p-2} (j+1)^{-1}f_j^p g^{j+1}\right)$$
\vspace{-5pt}

\noindent
which proves the form of $C$. Lemmas 2.6.1 and 2.7.4 in \cite{RosTD} show exactness of the stated exact sequence, provided that the diagram is commutative. For this last fact, note first that all intermediate groups in the construction of the isomorphism $\Omega^1_K\xrightarrow{\sim}\mathrm{H}^2(K,\widehat{\mathbf G_{\mathrm a}})$ are functorial in $\mathbf G_a$, so it suffices to show commutativity of the diagram:
\vspace{-7pt}

\begin{center}\begin{tikzcd}[column sep = large]
    \Omega^1_K\arrow[r, "\omega\,\mapsto {[X\omega]}"]\arrow[d, "C"] & \dfrac{\Omega^1_{\mathbf G_{\mathrm a}}/B^1_{\mathbf G_{\mathrm a}}}{\mathrm{im}(C^{-1}-i)}\arrow[d, "\mathrm{Fr}_{\mathbf G_{\mathrm a}}^*"]\\
    \Omega^1_K\arrow[r, "\omega\,\mapsto {[X\omega]}"] & \dfrac{\Omega^1_{\mathbf G_{\mathrm a}}/B^1_{\mathbf G_{\mathrm a}}}{\mathrm{im}(C^{-1}-i)}
\end{tikzcd}\end{center}
\vspace{-7pt}

\noindent
Here, $\mathrm{Fr}_{\mathbf G_{\mathrm a}}^*([X\cdot f\,\mathrm{d}g]) = [X^p\cdot f\,\mathrm{d}g]$, since the relative Frobenius morphism $\mathrm{Fr}_{\mathbf G_{\mathrm a}}$ is the dual of the $K$-algebra map $(X\mapsto X^p) : K[X]\rightarrow K[X]$. However, the difference $[X\cdot C(f\,\mathrm{d}g) - X^p\cdot f\,\mathrm{d}g]$ is exactly $(C^{-1}-i)(-X\cdot C(f\,\mathrm{d}g))$ in $\Omega^1_{\mathbf G_{\mathrm a}}/B^1_{\mathbf G_{\mathrm a}}$, hence it vanishes in the quotient.
\end{proof}

\begin{exmp}
Taking duals in the short exact sequence $0\longrightarrow \alpha_p\longrightarrow \mathbf G_{\mathrm a}\xrightarrow{\mathrm{Fr}} \mathbf G_{\mathrm a}\longrightarrow 0$ gives the top row of the following commutative diagram:
\vspace{-3pt}

\begin{center}\begin{tikzcd}
    0\arrow[r] & \mathrm{H}^1(K,\widehat{\alpha_p})\arrow[r, "\delta"]\arrow[d, "\rotatebox{90}{$\sim$}", pos=0.4] & \mathrm{H}^2(K,\widehat{\mathbf G_{\mathrm a}})\arrow[r, "\mathrm{Fr}^*"]\arrow[d, "\rotatebox{90}{$\sim$}", pos=0.4] & \mathrm{H}^2(K,\widehat{\mathbf G_{\mathrm a}})\arrow[r]\arrow[d, "\rotatebox{90}{$\sim$}", pos=0.4] & 0\\
    0\arrow[r] & K/K^p\arrow[r, "?"] &
    \Omega^1_{K}\arrow[r, "C"] & \Omega^1_{K}\arrow[r] & 0
\end{tikzcd}\end{center}
\vspace{-3pt}

\noindent
The left-most vertical map is an isomorphism which can easily be shown using Čech cohomology. If the right two vertical maps are Rosengarten's functorial isomorphism, then the exact sequence proven in the proposition above is consistent with the bottom-row map ``$?$'' being exactly $\mathrm{d}$.
\end{exmp}


\begin{exmp}\label{exexpllem}
Let $u$ be the uniformizer in a completion $k_v$ of $k = \mathbf F_p(t)$. For a fixed $b\in k_v$, consider an equation $a\,\mathrm{d}u - C(a\,\mathrm{d}u) = b\,\mathrm{d}u$ with unknown $a\in k_v$. We write $a = \sum_j a_ju^j$ and $b = \sum_j b_ju^j$ for some $a_j,b_j\in\mathbf F_q$. This equation is equivalent to a system of equations
\vspace{-11pt}

$$a_{j-1}^p - a_{pj-1} = b_{j-1}^p$$
\vspace{-12pt}

\noindent
which splits into three systems, depending on whether $pj-1$ is greater, equal or less than $j-1$:
\begin{itemize}
    \item Over $j > 0$, the system has infinitely many solutions, indexed by all the different choices of $a_{j-1}\in\mathbf F_q$ for $p\nmid j > 0$, each of which is possible.
    \item For $j = 0$, the equation $a_{-1} - a_{-1}^{1/p} = b_{-1}$ may or may not have a solution, depending on $b_{-1}$. If it does, it has exactly $p$ solutions.
    \item Over $j < 0$, the system has exactly one solution. Indeed, there always exists $N\geq 0$ such that $b_{j-1} = 0$ for $j < -N$. Any potential solution must satisfy
\vspace{-8pt}

    $$a_{j-1} = a_{pj-1}^{1/p} = a_{p^2j-1}^{1/p^2} = \ldots = 0$$
\vspace{-9pt}

\noindent
    for $j < -N$. Setting $a_{j-1} = 0$ for $j < -N$, we are forced into a unique solution of the system by iterating $a_{j-1} = b_{j-1} + a_{pj-1}^{1/p}$ for $j\geq -N$.
\end{itemize}
In particular, the existence of a solution of the starting equation depends only on the second case, that of $b_{-1}$. This calculation will be important in the two main examples of this section: 
\end{exmp}

\begin{exmp}\label{exexpl1}
The algebraic group $\mathbf Z/p$ can be seen (most easily as a scheme-theoretic fiber) as the kernel of $\mathbf G_{\mathrm a}\xrightarrow{\mathrm{id}-\mathrm{Fr}} \mathbf G_{\mathrm a}$. Hence, if $v$ is a place of $k = \mathbf F_p(t)$, we get the commutative~diagram
\begin{center}\begin{tikzcd}
    \Omega^1_k\arrow[r, "q"]\arrow[d, hook] & \Omega^1_k\arrow[r]\arrow[d, hook] & \mathrm{H}^2(k,\widehat{G})\arrow[r]\arrow[d, "\beta_v"] & 0\arrow[d]\\
    \Omega^1_{k_v}\arrow[r, "q_v"] &
    \Omega^1_{k_v}\arrow[r] & \mathrm{H}^2(k_v,\widehat{G})\arrow[r] & 0
\end{tikzcd}\end{center}\noindent
in which $q$ and $q_v$ are given by $\mathrm{id}-C$. The left two vertical maps are inclusions by \cite{Mat90}, Theorem 26.6(3) (the transcendental extension $k_v/k$ is separable). 
Now, $\Sh_S^2(\widehat{G}) = \bigcap_{v\notin S}\ker\beta_v$ and $\Sh_\omega^2(\widehat{G}) = \bigcup_{S\textrm{ finite}}\Sh_S^2(\widehat{G})$. Recall that $p\geq 3$. We define elements $x_N\in\Omega^1_k$ for $N\geq 0$ by
$$x_N\coloneqq\frac{t^{(p-1)^N-1}\mathrm{d}t}{1-t^{(p-1)^N}} = \sum_{n=1}^\infty t^{n\cdot (p-1)^N-1}\,\mathrm{d}t$$
If we can show that $x_N\in\mathrm{im}(q_v)$ for almost all $v$, then it represents an element $[x_N]\in\Sh_\omega^2(\widehat{G})$. If we then show that $x_N-x_K\notin\mathrm{im}(q)$ holds for all pairs $K > N$, we will have shown that the elements $[x_N]\in\Sh_\omega^2(\widehat{G})$ are distinct and that $\Sh_\omega^2(\widehat{G})$ is infinite. The proofs of these two statement are as follows:
\begin{enumerate}[\hspace{-0.07cm} 1)]

\item Let $u\in k_v$ be a uniformizer and write $x_N = \sum x_{N,j}u^j$ for $x_{N,j}\in\mathbf F_q$. For almost all $v$, we have $x_N\in\mathcal O_v$. Then $x_{N,-1} = 0-0^{1/p}$, hence Example \ref{exexpllem} shows that $x_N$ is in the image of $q_v = \mathrm{id}-C$. This is what we wanted to prove.

As a remark, note that another way to write down the conclusion of Example \ref{exexpllem} is to say that the class $[b\,\mathrm{d}u]\in\mathrm{coker}(q_v)$, for $b = \sum b_ju^j\in k_v$, depends only on $b_{-1}\in\mathbf F_q$.
More precisely, if $f : \mathbf F_q\rightarrow\mathbf F_q$ denotes the homomorphism $x\mapsto x-x^{1/p}$, then it immediately follows that $\texttt{\#}\mathrm{coker}(q_v) = \texttt{\#}\mathrm{coker}(f) = \texttt{\#}\mathrm{ker}(f) = p$. This verifies that $\mathrm{H}^2(k_v,\widehat{\mathbf Z/p})\simeq\mathbf Z/p$, which is consistent with local duality.

\item Now, let $v = [t]$. Note that the image of $k\hookrightarrow k_v$ consists exactly of those $\sum_n a_n t^n$ for which the sequence $a_n$ is eventually periodic: Indeed, every eventually periodic sequence gives a series clearly equal to a rational function. For the converse, take a rational function $P(t)/Q(t)$ with $P,Q$ relatively prime. We can assume $t\nmid Q$. Because $t$ and $Q$ are relatively prime in the PID $\mathbf F_p[t]$, the image of $t$ is a unit in the finite quotient ring $\mathbf F_p[t]/(Q)$. Hence $t^m = 1+Q(t)R(t)$ for some $m > 0$ and $R\in\mathbf F_p[t]$, so $P(t)/Q(t) = -P(t)R(t)\sum_n t^{mn}$, which has eventually periodic coefficients.

Let $K > N\geq 0$. One way to show that $x_N-x_K\notin\mathrm{im}(q)$ is to show that, given some $a = \sum a_n t^n\in k_v$ such that $a\,\mathrm{d}t - C(a\,\mathrm{d}t) = x_N-x_K$, the sequence of $a_n$ cannot eventually be periodic. For the sake of contradiction, suppose that it is: Then we have
$$\sum_{n=1}^\infty t^{n\cdot (p-1)^N-1}\,\mathrm{d}t - \sum_{n=1}^\infty t^{n\cdot (p-1)^K-1}\,\mathrm{d}t = \sum_n a_nt^n\,\mathrm{d}t-C\!\left(\sum_n a_nt^n\,\mathrm{d}t\!\right) \!= \sum_n (a_{n-1}-a_{np-1})t^{n-1}\,\mathrm{d}t$$
for a sequence $a_n$ which is eventually periodic with period $P$, where $a_n^p = a_n$ since $a_n\in\mathbf F_p$. Suppose that $P\mid M\cdot (p-1)^r$ for some $M,r$ with $\mathrm{gcd}(M,p-1) = 1$. Looking at the coefficients next to $t^n$ for $n = M(p-1)^Np^j$, the above identity shows $a_{M(p-1)^Np^j-1} - a_{M(p-1)^Np^{j+1}-1} = 1$ for $j\geq 0$ (precisely because $(p-1)^K$ does not divide $M(p-1)^Np^j$). Summing the resulting expressions for $j\in\{0,\ldots,s-1\}$ gives $a_{M(p-1)^N-1} - a_{M(p-1)^Np^s-1} = s$ for any $s\geq 0$.

Now, $(M(p-1)^Np^s-1) - (M(p-1)^N-1) = M(p-1)^N(p^s-1)$, which is divisible by $P$ if $(p-1)^r\mid p^s-1$. This is true if we let $s = \varphi((p-1)^r)$, where $\varphi$ denotes Euler's totient function. Finally, we use that $a_n$ is eventually periodic: For a large enough choice of $M$, we get $a_{M(p-1)^N-1} = a_{M(p-1)^Np^s-1}$ in $\mathbf F_p$, i.e. $p\mid s$. However, all prime factors of the totient function $\varphi((p-1)^r)$ are smaller than $p$, a contradiction.
\end{enumerate}
\end{exmp}

This completes our analysis of one example complementing Theorem \ref{thmsha2fin}, as stated at the beginning of the section. In this example, the difficulty in constructing an infinite sequence of distinct elements in $\mathrm{coker}(q)$ lies in the fact that they cannot all be distinct in any $\mathrm{coker}(q_v)$ (which we've seen to be a finite group isomorphic to $\mathbf Z/p$). However, there is no such difficulty in the following example:

\begin{exmp}\label{exexpl2}
The algebraic group $G = \{tx^p = y^p-y\}$ over $k = \mathbf F_p(t)$ is the kernel of the map $\mathbf G_{\mathrm a}^2\xrightarrow{(x,y)\;\mapsto\; tx^p-y^p+y} \mathbf G_{\mathrm a}$. If $v$ is a place of $k = \mathbf F_p(t)$, we get a commutative diagram as before:
\vspace{-5pt}

\begin{center}\begin{tikzcd}
    \Omega^1_k\arrow[r, "q"]\arrow[d, hook] & (\Omega^1_k)^2\arrow[r]\arrow[d, hook] & \mathrm{H}^2(k,\widehat{G})\arrow[r]\arrow[d, "\beta_v"] & 0\arrow[d]\\
    \Omega^1_{k_v}\arrow[r, "q_v"] &
    (\Omega^1_{k_v})^2\arrow[r] & \mathrm{H}^2(k_v,\widehat{G})\arrow[r] & 0
\end{tikzcd}\end{center}
\vspace{-3pt}

\noindent
Here, $q$ and $q_v$ are given by $\eta\mapsto (C(t\eta),\, \eta-C(\eta))$. We define elements $x_N = (t^{-N}\,\mathrm{d}t,\, 0)\in(\Omega^1_k)^2$ for $N > 0$.
This time, we first show that $x_N-x_K\notin\mathrm{im}(q_{[t]})$, where $[t]$ is the place of the prime $(t)$; it follows in particular that $x_N-x_K\notin\mathrm{im}(q)$. Then we show that $x_N\in\mathrm{im}(q_v)$ for all $v\neq [t]$. In this way we will have constructed infinitely many distinct elements $[x_N]\in\Sh_{[t]}^2(\widehat{G})$. Recall that $p\geq 3$. The proofs of these two assertions follow, in order:
\begin{enumerate}[\hspace{-0.07cm} 1)]

\item Let $K > N > 0$ and suppose that $q_{[t]}(a\,\mathrm{d}t) = x_N-x_K$ for $a = \sum_j a_jt^j$. Then $a\,\mathrm{d}t = C(a\,\mathrm{d}t)$ and $C(at\,\mathrm{d}t) = (t^{-N}-t^{-K})\,\mathrm{d}t$.
However, Example \ref{exexpllem} immediately shows us that $a_{j-1} = 0$ when $j < 0$, uniquely determined by the first equality. Thus $C(at\,\mathrm{d}t) = \sum_j b_jt^j$ with $b_j = 0$ for all $j < 0$, a contradiction. 

\item Let $v\neq [t]$ be a place of $k$. We can choose a uniformizer $u\in k_v$. We want to show that $q_v(a\,\mathrm{d}u) = x_N$ for some $a\in k_v$. Since $[k_v : k_v^p] = p$ and $u\notin k_v^p$, every $a$ can be written in the form $a = \sum_{j=0}^{p-1} a_j^p u^j$ with $a_j\in k_v$.

First, suppose that $v = [1/t]$ and $u = 1/t$. The system can be rewritten as $a\,\mathrm{d}u = C(a\,\mathrm{d}u)$ and $C(a\,\mathrm{d}u/u) = -u^{N-2}\,\mathrm{d}u$. We will search for $a$ in the form $a = a_0^p+a_{p-1}^pu^{p-1}$. The second equation gives $a_0 = -u^{N-1}\in\mathcal O_v$. The first equation then equivalently becomes:
\vspace{-12pt}

$$(a_{p-1}u)^p - a_{p-1}u = -a_0^pu$$
\vspace{-12pt}

\noindent
An application of Hensel's lemma to the polynomial $X^p - X = u^{p(N-1)+1}$ with single root $0$ modulo $u$ gives us the desired element $a_{p-1}$.

Otherwise, suppose that $v\neq [1/t]$ and that $u\in\mathbf F_p[t]$ is some irreducible polynomial in $t$. In this case, crucially, we see that $v(t) = 0$. It follows that $t\in\mathcal O_v$ and that we can write $t = \sum_{j=0}^{p-1} t_j^p u^j$ for $t_j\in\mathcal O_v$. Next, write $x_N = (y_N\,\mathrm{d}u,\, 0)$, where $y_N = t^{-N}t'_u\in\mathcal O_v$. We will search for $a$ in the form $a = a_{p-2}^pu^{p-2}+a_{p-1}^pu^{p-1}$ with $a_{p-2},a_{p-1}\in\mathcal O_v$. However, unlike the previous case, the solutions to both equations in the system depend on each other.

For a fixed $a_{p-2}\in\mathcal O_v$, the equation $a\,\mathrm{d}u = C(a\,\mathrm{d}u)$ becomes
\vspace{-12pt}

$$(a_{p-1}u)^p - a_{p-1}u = -a_{p-2}^pu^{p-1}$$
\vspace{-12pt}

\noindent
which has, just as before, a unique solution $a_{p-1}u\in\mathcal O_v$ which is $0$ modulo $u$. Equivalently, this determines a unique $a_{p-1}\coloneqq f(a_{p-2})\in\mathcal O_v$.
Explicitly, this lift is given by the series
\vspace{-8pt}

$$f(a_{p-2}) = -u^{-1}\sum_{s=0}^\infty \left(-a_{p-2}^p u^{p-1}\right)^{p^s} = \sum_{s=0}^\infty a_{p-2}^{p^{s+1}} u^{(p-1)p^s-1}$$
\vspace{-6pt}

\noindent
whose $n$-th partial sum we denote by $f_n$. Then $f_n(a_{p-2})\longrightarrow f(a_{p-2})$.

It remains to prove the existence of $a_{p-2}\in\mathcal O_v$ such that setting $a_{p-1} = f(a_{p-2})$ gives us $C(at\,\mathrm{d}u) = y_N$. This condition is equivalent to $a_{p-2}t_1+f(a_{p-2})t_0 = y_N$, by looking at the $u^{p-1}$-coefficient of $at$. By compactness of $\mathcal O_v$, it suffices to find solutions of all the analogous equations with $f$ replaced by $f_n$, for all $n$. Note that $\partial f_n/\partial a_{p-2} = 0$, since all powers of $a_{p-2}$ appearing in the expansion are divisible by $p$. Also: $f_n(a_{p-2})\equiv 0\;(\mathrm{mod}\, u)$

Define $P_n(a_{p-2}) = a_{p-2}t_1+f_n(a_{p-2})t_0-y_N$. Then $\partial P_n/\partial a_{p-2} = t_1$ and:
\vspace{-11pt}

$$P_n(a_{p-2})\equiv a_{p-2}t_1-y_N\;(\mathrm{mod}\, u)$$
\vspace{-14pt}

\noindent
All that remains to apply Hensel's lemma is to prove that $t_1\notin\mathfrak m_v$. Suppose the contrary. Then $t = t_0^p + ru^2$ for some $r\in\mathcal O_v$. Writing $u = P(t) = P(t_0^p + ru^2) = c^p + du^2$ for $c,d\in\mathcal O_v$ (since $P\in\mathbf F_p[t]$), we get a contradiction with $u\not\equiv c^p\;(\mathrm{mod}\, u^2)$. This ends the proof.
\end{enumerate}
\end{exmp}

\begin{rem}
Conceptually, the choice to pick the elements $x_N = (t^{-N}\,\mathrm{d}t,\, 0)$ can be explained using the duality theorems:
The identification $\mathrm{H}^2(k_v, \widehat{\mathbf G_{\mathrm{a}}})\cong\Omega_{k_v}^1$ implies that there is a canonical pairing $k_v\times\Omega_{k_v}^1\rightarrow\mathbf Q/\mathbf Z$, induced by the cup product. A natural possibility is the pairing taking $(x, f\,\mathrm{d}u)$ to $q^{-1}\mathrm{res}(xf\,\mathrm{d}u)\in q^{-1}\mathbf Z/\mathbf Z\subseteq\mathbf Q/\mathbf Z$ via the residue map $\mathrm{res} : f\,\mathrm{d}u\mapsto f_{-1}\in\mathbf F_q$ (where $q$ is the cardinality of the residue field at $v$) which is easily seen to be independent of the choice of uniformizer $u$. It even makes the square in Proposition \ref{propcart} commute.

By Remark \ref{remoes}, the group $\Sh_{[t]}^2(\widehat{G})$ is infinite precisely because the map $G(\mathbf A_{[t]})\rightarrow\Sh_{[t]}^2(\widehat{G})^*$ has infinite image. We would thus like to construct infinitely many pairs $[(f\,\mathrm{d}t,\,g\,\mathrm{d}t)]\in\Sh_{[t]}^2(\widehat{G})$ with the maps $(x,y)\mapsto\mathrm{res}((xf+yg)\,\mathrm{d}t)$ distinct on $G(\mathbf A_{[t]}) = \{tx^p = y^p-y\mid x,y\in k_{[t]}\}$.

Finally, we also know by Remark \ref{remoes} that such a pair $(x,y)$ can be found above every $x\in\mathcal O_{[t]}$. Thus there is an abundance of elements $(x,y)\in\Sh_{[t]}^2(\widehat{G})$ concentrated in positive degrees, but our pairing can only distinguish residues in degree $-1$. We are hence naturally led to consider multiplying $x\in\mathcal O_v$ by elements $t^{-n}\,\mathrm{d}t$ concentrated in negative degrees with respect to $t$.
\end{rem}
\linespread{0.6}

\end{document}